%%%%%%%%%%%%%%%%%%%%%%%%
%                        	    Preamble                                 %
%%%%%%%%%%%%%%%%%%%%%%%%

\documentclass[reqno]{amsart}
\usepackage[left=1in,right=1in,top=.95in,bottom=.95in]{geometry}
\setlength{\headheight}{23pt}
\usepackage{tikz}
\usetikzlibrary{shapes,snakes,calc,arrows}
\usepackage{amsthm}
\usepackage{amscd}
\usepackage{amsfonts}
\usepackage{amsmath}
\usepackage{amssymb}
\usepackage{mathrsfs}
\usepackage{multirow}
\usepackage{verbatim}
\usepackage{url}
\usepackage{graphicx}
\usepackage{yfonts}

\begin{document}

%%%%%%%%%%%%%%%%%%%%%%%%
%                       Custom Commands                           %
%%%%%%%%%%%%%%%%%%%%%%%%

%\renewcommand{\theenumii}{\roman{enumii}}
%\renewcommand{\labelenumii}{(\theenumii)}
\newcommand{\supp}{\text{supp}}
\newcommand{\Aut}{\text{Aut}}
\newcommand{\Gal}{\text{Gal}}
\newcommand{\Inn}{\text{Inn}}
\newcommand{\Irr}{\text{Irr}}
\newcommand{\Ker}{\text{Ker}}
\newcommand{\N}{\mathbb{N}}
\newcommand{\Z}{\mathbb{Z}}
\newcommand{\Q}{\mathbb{Q}}
\newcommand{\R}{\mathbb{R}}
\newcommand{\C}{\mathbb{C}}
\renewcommand{\H}{\mathcal{H}}
\newcommand{\B}{\mathcal{B}}
\newcommand{\A}{\mathcal{A}}
\newcommand{\K}{\mathcal{K}}
\newcommand{\M}{\mathcal{M}}

\newcommand{\J}{\mathscr{J}}
\newcommand{\D}{\mathscr{D}}

\newcommand{\ul}[1]{\underline{#1}}

\newcommand{\I}{\text{I}}
\newcommand{\II}{\text{II}}
\newcommand{\III}{\text{III}}

\newcommand{\<}{\left\langle}
\renewcommand{\>}{\right\rangle}
\renewcommand{\Re}[1]{\text{Re}\ #1}
\renewcommand{\Im}[1]{\text{Im}\ #1}
\newcommand{\dom}[1]{\text{dom}\,#1}
\renewcommand{\i}{\text{i}}
\renewcommand{\mod}[1]{(\operatorname{mod}#1)}
\newcommand{\mb}[1]{\mathbb{#1}}
\newcommand{\mc}[1]{\mathcal{#1}}
\newcommand{\mf}[1]{\mathfrak{#1}}
\newcommand{\im}{\operatorname{im}}
%\renewcommand{\span}{\text{span}}

%%%%%%%%%%%%%%%%%%%%%%%%
%                      Theorem Environments                      %
%%%%%%%%%%%%%%%%%%%%%%%%

\newtheorem{thm}{Theorem}[section]
\newtheorem{prop}[thm]{Proposition}
\newtheorem{lem}[thm]{Lemma}
\newtheorem{cor}[thm]{Corollary}
\newtheorem{innercthm}{Theorem}
\newenvironment{cthm}[1]
  {\renewcommand\theinnercthm{#1}\innercthm}
  {\endinnercthm}
\newtheorem{innerclem}{Lemma}
\newenvironment{clem}[1]
  {\renewcommand\theinnerclem{#1}\innerclem}
  {\endinnerclem}

\theoremstyle{definition}
\newtheorem{defi}[thm]{Definition}
\newtheorem{ex}[thm]{Example}
\newtheorem*{exs}{Examples}
\newtheorem{rem}[thm]{Remark}
\newtheorem{innercdefi}{Definition}
\newenvironment{cdefi}[1]
  {\renewcommand\theinnercdefi{#1}\innercdefi}
  {\endinnercdefi}

%%%%%%%%%%%%%%%%%%%%%%%%
%                        	  Title Page                                  %
%%%%%%%%%%%%%%%%%%%%%%%%

\title{Free monotone transport without a trace}

\author{Brent Nelson}
\address{UCLA Mathematics Department}
\email{bnelson6@math.ucla.edu}
\thanks{Research supported by NSF grants DMS-1161411 and DMS-0838680}

\maketitle

\begin{abstract}
We adapt the free monotone transport results of Guionnet and Shlyakhtenko to the type III case. As a direct application, we obtain that the $q$-deformed Araki-Woods algebras are isomorphic (for sufficiently small $|q|$).
\end{abstract}

%%%%%%%%%%%%%%%%%%%%%%%%
%                        	 Introduction                                %
%%%%%%%%%%%%%%%%%%%%%%%%

\section{Introduction}

Classically, transport from a probability space $(X,\mu)$ to a probability space $(Z,\nu)$ is a measurable map $T\colon X\rightarrow Z$ such that $T_*(\mu)=\nu$. In particular, it implies that there is a measure-preserving embedding of $L^\infty (Z,\nu)$ into $L^\infty(X,\mu)$ via $f\mapsto f\circ T$. These generalize to non-commutative probability theory in that the existence of transport from the law $\varphi_X$ of an $N$-tuple of non-commutative random variables $X=(X_1,\ldots, X_N)$ to the law $\varphi_Z$ of $Z=(Z_1,\ldots, Z_N)$ implies that there is a state-preserving embedding of $W^*(Z_1,\ldots, Z_N)$ into $W^*(X_1,\ldots, X_N)$. Provided there is sufficient control over the transport map, the embedding can be made into an isomorphism.\par
In \cite{SG11}, Guionnet and Shlyakhtenko produced a non-commutative analogue of Brenier's monotone transport theorem by solving a free analogue of the Monge-Amp\'{e}re equation. This provided criterion for when an $N$-tuple of non-commutative random variables generate the free group factor. Their result took place in the context of a tracial von Neumann algebra $(M,\tau)$ where $X_1,\ldots, X_N \in M_{s.a.}$ are free semi-circular variables. We adapt the result to the context of a von Neumann algebra $M$ with a (not necessarily tracial) state $\varphi$ on $M$. The random variables $X_1,\ldots, X_N$ are no longer assumed to be free; instead their joint law is assumed to be a free quasi-free state (\emph{cf.} \cite{S97}). This produces criterion for when an $N$-tuple of non-commutative random variables generate the free Araki-Woods algebra $\Gamma(\H_\R,U_t)\rq{}\rq{}$ (\emph{cf.} \cite{S97}).\par
One of the key assumptions that was needed in \cite{SG11} to produce transport was that the trace satisfied the so-called Schwinger-Dyson equation with a potential $V$ (that is close in a precise-sense to the Gaussian potential $\sum X_j^2$) for non-commutative polynomials in the transport variables. In the commutative case, this amounts to saying that if $\eta$ is the semicircle law ($d\eta(t)=\chi_{[-2,2]}(t) \frac{1}{2\pi}\sqrt{4-t^2} dt$) and $V(t)=\frac{1}{2} t^2 + W(t)$ (with $W$ analytic on a disk of radius $R$ and small $\|\cdot\|_\infty$-norm) then
	\begin{align*}
		\int_\R V'(t) f(t)\ d\eta(t) = \int_\R \int_\R \frac{f(s) - f(t)}{s-t} d\eta(s) d\eta(t)
	\end{align*}
for all $f$ which are analytic on the disk of radius $R$. In general, a measure satisfying this equation is called a Gibbs state with potential $V$.\par
In the free case, the measure $\eta$ is replaced with a state $\varphi$ on $M$ which we call the free Gibbs state with potential $V$ ($\varphi=\varphi_V$), and $W$ is a non-commutative power series in the $X_1,\ldots, X_N$ with radius of convergence at least $R$. For quadratic potentials of the form
	\begin{align*}
		V_0=\frac{1}{2}\sum_{j,k=1}^N \left[\frac{1+A}{2}\right]_{jk} X_kX_j,\qquad A\in M_N(\C)
	\end{align*}
the corresponding free Gibbs state is the free quasi-free state induced by the scalar matrix $A$ (\emph{cf.} \cite{S97}). Provided $W$ is small enough with respect to a particular Banach norm, then the free Gibbs state with potential $V=V_0+W$ is unique in the sense that if the laws of two $N$-tuples $Y=(Y_1,\ldots, Y_N)$ and $Z=(Z_1,\ldots, Z_N)$ both solve the Schwinger-Dyson equation, then in fact $W^*(Y_1,\ldots, Y_N)\cong W^*(Z_1,\ldots, Z_N)$ (\emph{cf.} Theorem 2.1 in \cite{GMS06}). In this paper, we produce transport from $\varphi_{V_0}$ to $\varphi_V$.\par
In the tracial case, starting with the non-commutative Schwinger-Dyson equation, Guionnet and Shlyakhtenko produced an equivalent version which is amenable to a fixed point argument. We prove the uniqueness of the free Gibbs state in our non-tracial setting, which allows us to proceed along very similar lines to Guionnet and Shlyakhtenko. However, we are forced to consider slightly more general potentials as well as non-tracial states. Consequently, the modular automorphism group plays a significant role.\par
In \cite{H}, Hiai developed a generalization of Shlyakhtenko\rq{}s algebras $\Gamma(\H_\R, U_t)$ from \cite{S97}, called $q$-deformed Araki-Woods algebras. Letting $A$ be the generator of the one-parameter family of unitary operators $\{U_t\}_{t\in\R}$, Hiai was able to show the von Neumann algebras $\Gamma_q(\H_\R,U_t)\rq{}\rq{}$ are factors and produced a type classification, but only in the case that $A$ has infinitely many mutually orthogonal eigenvectors. In particular, when the Hilbert space $\H_\R$ is finite dimensional the questions of factoriality and type classification remained unanswered. An application of our result in Section \ref{application} yields $\Gamma_q(\H_\R,U_t)\rq{}\rq{}\cong \Gamma(\H_\R,U_t)\rq{}\rq{}$ for small $|q|$, and hence we are able to settle these questions using Theorem 6.1 in \cite{S97}.\par
We begin the paper by recalling the free Araki-Woods factors and $q$-deformed Araki-Woods algebras and then considering several derivations defined on these von Neumann algebras.  An introduction to some Banach norms and additional differential operators follows. The definition of free transport is given, including the conditions for it to be monotone, and the Schwinger-Dyson equations and free Gibbs states are defined. As in \cite{SG11}, several equivalent forms of the Schwinger-Dyson equation for a potential $V$ are produced. Using estimates relevant to these equivalent forms of the Schwinger-Dyson equation, the existence of a solution is produced through a fixed point argument. The implications of this solution are discussed as well as how to improve it so that an isomorphism exists. We conclude the paper with an application of the isomorphism result to $q$-deformed Araki-Woods algebras.

\subsection*{Acknowledgments}

I would like to thank my advisor, Prof. D. Shlyakhtenko for the initial idea of the paper, many helpful suggestions and discussions, and his general guidance.

%%%%%%%%%%%%%%%%%%%%%%%%
%                 		Preliminaries	          		          %
%%%%%%%%%%%%%%%%%%%%%%%%

\section{Preliminaries}\label{prelim}

%	The free Araki-Woods factor and $q$-deformed Araki-Woods algebras
%%%%%%%%%%%%%%%%%%%%%%%%%%%%%%%%%%%%%%%%%%%%%
\subsection{The free Araki-Woods factor and $q$-deformed Araki-Woods algebras}\label{free_Araki-Woods}

Let $\H_\R$ be a real Hilbert space and $U_t$ a strongly continuous one-parameter group of orthogonal transformations on $\H_\R$. Letting $\H_\C:=\H_\R +\i \H_\R$ be the complexified Hilbert space, the $U_t$ can be extended to a one-parameter unitary group (still denoted as $U_t$). Let $A$ be the generator of the $U_t$ (i.e. $U_t=A^{it}$ and $A$ is a potentially unbounded positive operator). Let $\<\cdot,\cdot\>$ be the inner product on $\H_\C$ which is complex-linear in the second coordinate (as all other inner products will be in this section). Define an inner product $\<\cdot,\cdot\>_U$ on $\H_\C$ by
	\begin{align*}
		\<x,y\>_U  = \< \frac{2}{1+A^{-1}}x, y\>,\qquad x,y\in\H_\C.
	\end{align*}
Let $\H$ be the complex Hilbert space obtained by completing $\H_\C$ with respect to $\<\cdot,\cdot\>_U$. Note that if we start with the trivial one-parameter group $U_t=1$ for all $t$ then $A=1$, $\<\cdot,\cdot\>_U=\<\cdot,\cdot\>$ and $\H=\H_\C$. In this case we will write $\<\cdot,\cdot\>_1$ for $\<\cdot,\cdot\>_U$.\par
For $-1<q<1$, the $q$-Fock space $\mathcal{F}_q(\H)$ is the completion of $\mathcal{F}^{\text{finite}}(\H):=\bigoplus_{n=0}^\infty \H^{\otimes n}$, where $\H^{\otimes 0}=\C\Omega$ with vacuum vector $\Omega$, with respect to the sesquilinear form $\<\cdot,\cdot\>_{U,q}$ given by
	\begin{align*}
		\<f_1\otimes\cdots\otimes f_n, g_1\otimes\cdot\otimes g_m\>_{U,q} = \delta_{n=m} \sum_{\pi\in S_n} q^{i(\pi)} \<f_1,g_{\pi(1)}\>_U\cdots \<f_n,g_{\pi(n)}\>_U,
	\end{align*}
where $i(\pi)$ denotes the number of inversions of the permutation $\pi\in S_n$. We may at times denote $\mathcal{F}_q(\H_\R,U_t)=\mathcal{F}_q(\H)$ to emphasize $\{U_t\}$.

For any $h\in \H$ we can define the left $q$-creation operator $l(h)\in \B(\mc{F}_q(\H))$ by
	\begin{align*}
		&l_q(h)\Omega=h;\\
		&l_q(h)(f_1\otimes\cdots\otimes f_n)=h\otimes f_1\otimes\cdots \otimes f_n,
	\end{align*}
then its adjoint is the left $q$-annihilation operator:
	\begin{align*}
		&l_q^*(h)\Omega=0;\\
		&l_q^*(h)(f_1\otimes\cdots\otimes f_n)= \sum_{i=1}^n q^{i-1} \<h,f_i\>_U f_1\otimes \cdots \otimes f_{i-1}\otimes f_{i+1}\otimes\cdots \otimes f_n.
	\end{align*}
Also define
	\begin{align*}
		s_q(h):=l_q(h)+l_q^*(h).
	\end{align*}\par
We let $\Gamma_q(\H_\R,U_t)$ be the $C^*$-algebra generated by $\{s_q(h)\colon h\in\H_\R\}$. The corresponding von Neumann algebra $M_q:=\Gamma_q(\H_\R,U_t)''\subset \B(\mc{F}_q(\H))$ is called a \emph{$q$-deformed Araki-Woods algebra}, after \cite{H}, except when $q=0$ where $M_0=\Gamma_0(\H_\R,U_t)''$ is called a \emph{free Araki-Woods factor}, after \cite{S97}.\par
It was shown in \cite{H} that $\Omega$ is a cyclic and separating vector for $M_q$ and consequently the vacuum state $\varphi_q(\cdot)=\<\Omega,\cdot\  \Omega\>_{U,q}$ is faithful. For $q\neq 0$, $\varphi_q$ is called the \emph{$q$-quasi-free state}, or the \emph{$q$-quasi-free state associated to $A$}. For $q=0$, $\varphi_0$ is called the \emph{free quasi-free state}, or the \emph{free quasi-free state associated to $A$}.\par

\begin{rem}
For $f_1,\ldots, f_n\in \H_\R$, computing $\varphi_q(s_q(f_1)\cdots s_q(f_n))$ is best done diagrammatically through non-crossing (when $q=0$) and crossing (when $q\neq 0$) pairing diagrams. When $q=0$, visualize a rectangle with the vectors $f_1,\ldots, f_n$ arranged in order along the top:
	\begin{equation*}
		\begin{tikzpicture}
			\draw [thick] (0,0) -- (0,0.6) -- (2.2,0.6) -- (2.2,0) -- (0,0);
			\node at (1.1,0.3) {$f_1\ f_2\ \cdots\ f_n$};
		\end{tikzpicture}.
	\end{equation*}
$\varphi(s(f_1)\cdots s(f_n))$ counts all the ways to pair the vectors to each other via chords above the rectangle so that no two chords intersect and if a vector $f_i$ is connected to a vector $f_j$ (with $f_i$ on the left) then that diagram is weighted by a factor of $\<f_i,f_j\>_U$. For example the following diagram has the denoted weight:
	\begin{equation*}
		\begin{tikzpicture}[baseline]
			\draw [thick] (0,0.3) -- (0,-0.3) -- (3.6,-0.3) -- (3.6,0.3) -- (0,0.3);
			\node at (.3,0) {$f_1$};
			\node at (.9,0) {$f_2$};
			\node at (1.5,0) {$f_3$};
			\node at (2.1,0) {$f_4$};
			\node at (2.7,0) {$f_5$};
			\node at (3.3,0) {$f_6$};
			\draw[thick] (2.1,0.3) arc (0:180:0.9 and 0.6); %coordinate marks the right end-point of the semi-circle NOT the center
			\draw[thick] (1.5,0.3) arc (0:180:0.3 and 0.2);
			\draw[thick] (3.3,0.3) arc (0:180:0.3 and 0.2);
		\end{tikzpicture}
		=\<f_1,f_4\>_U\<f_2,f_3\>_U\<f_5,f_6\>_U.
	\end{equation*}
Thus
	\begin{align*}
		\varphi(s(f_1)s(f_2)s(f_3)s(f_4)) &= 
			\begin{tikzpicture}[baseline]
			\draw [thick] (0,0.3) -- (0,-0.3) -- (2.4,-0.3) -- (2.4,0.3) -- (0,0.3);
			\node at (.3,0) {$f_1$};
			\node at (.9,0) {$f_2$};
			\node at (1.5,0) {$f_3$};
			\node at (2.1,0) {$f_4$};
			\draw[thick] (0.9,0.3) arc (0:180:0.3 and 0.2);
			\draw[thick] (2.1,0.3) arc (0:180:0.3 and 0.2);
			\end{tikzpicture}
		+
			\begin{tikzpicture}[baseline]
			\draw [thick] (0,0.3) -- (0,-0.3) -- (2.4,-0.3) -- (2.4,0.3) -- (0,0.3);
			\node at (.3,0) {$f_1$};
			\node at (.9,0) {$f_2$};
			\node at (1.5,0) {$f_3$};
			\node at (2.1,0) {$f_4$};
			\draw[thick] (2.1,0.3) arc (0:180:0.9 and 0.6);
			\draw[thick] (1.5,0.3) arc (0:180:0.3 and 0.2);
			\end{tikzpicture}\\
		&=\<f_1,f_2\>_U\<f_3,f_4\>_U + \<f_1, f_4\>_U\<f_2,f_3\>_U.
	\end{align*}
Note that $\varphi$ then clearly takes a value of zero on all monomials of odd degree.\par
When $q\neq 0$, the chords may intersect and do so at the cost of a factor of $q$ for each intersection. Revisiting the previous example in this case we then have
	\begin{align*}
		\varphi_q(s_q(f_1)s_q(f_2)s_q(f_3)s_q(f_4)) &= 
			\begin{tikzpicture}[baseline]
			\draw [thick] (0,0.3) -- (0,-0.3) -- (2.4,-0.3) -- (2.4,0.3) -- (0,0.3);
			\node at (.3,0) {$f_1$};
			\node at (.9,0) {$f_2$};
			\node at (1.5,0) {$f_3$};
			\node at (2.1,0) {$f_4$};
			\draw[thick] (0.9,0.3) arc (0:180:0.3 and 0.2);
			\draw[thick] (2.1,0.3) arc (0:180:0.3 and 0.2);
			\end{tikzpicture}
		+
			\begin{tikzpicture}[baseline]
			\draw [thick] (0,0.3) -- (0,-0.3) -- (2.4,-0.3) -- (2.4,0.3) -- (0,0.3);
			\node at (.3,0) {$f_1$};
			\node at (.9,0) {$f_2$};
			\node at (1.5,0) {$f_3$};
			\node at (2.1,0) {$f_4$};
			\draw[thick] (1.5,0.3) arc (0:180:0.6 and 0.4);
			\draw[thick] (2.1,0.3) arc (0:180:0.6 and 0.4);
			\end{tikzpicture}
		+
			\begin{tikzpicture}[baseline]
			\draw [thick] (0,0.3) -- (0,-0.3) -- (2.4,-0.3) -- (2.4,0.3) -- (0,0.3);
			\node at (.3,0) {$f_1$};
			\node at (.9,0) {$f_2$};
			\node at (1.5,0) {$f_3$};
			\node at (2.1,0) {$f_4$};
			\draw[thick] (2.1,0.3) arc (0:180:0.9 and 0.6);
			\draw[thick] (1.5,0.3) arc (0:180:0.3 and 0.2);
			\end{tikzpicture}\\
		&=\<f_1,f_2\>_U\<f_3,f_4\>_U + q\<f_1,f_3\>_U\<f_2,f_4\>_U + \<f_1, f_4\>_U\<f_2,f_3\>_U.
	\end{align*}
We note that in computing $\varphi_q(s_q(f_1)s_q(f_2)s_q(f_3)s_q(f_4))=\<\Omega, s_q(f_1)s_q(f_2)s_q(f_3)s_q(f_4)\Omega\>_{U,q}$ by writing out $s_q(f_1)s_q(f_2)s_q(f_3)s_q(f_4)\Omega$, the term $ q\<f_1,f_3\>_U\<f_2,f_4\>_U$ comes from when $s_q(f_1)s_q(f_2)$ acts on $f_3\otimes f_4$ and the operator $l_q^*(f_2)$ ``skips'' over the the first vector in the tensor product (hence the factor of $q$).\par
It is a worthwhile exercise to restrict to the case when there is only a single operator $s_q(f)$ (so that all inner-products are $1$) and draw out the diagrams corresponding to $\varphi(s_q(f)^n)$ for $n=2,4,6,8$.
\end{rem}

The Tomita-Takesaki theory for $M_q$ is established in Lemma 1.4 of \cite{H}, which we recall here for convenience. Let $S$ denote the closure of the map $x\Omega\mapsto x^*\Omega$, and let $S=J\Delta^{1/2}$ be its polar decomposition so that $J$ and $\Delta$ are the modular conjugation and modular operator, respectively. Then for $n\geq 1$
	\begin{align}\label{Tomita-Takesaki_formulas}
		S(f_1\otimes\cdots\otimes f_n)&=f_n\otimes \cdots \otimes f_n	&\text{for }f_1,\ldots, f_n\in\H_\R;\notag\\
		\Delta(f_1\otimes\cdots\otimes f_n)&=(A^{-1}f_1)\otimes \cdots\otimes (A^{-1} f_n)		&\text{for }f_1,\ldots,f_n\in \H_\R\cap \text{dom}{A^{-1}};\\
		J(f_1\otimes\cdots\otimes f_n)&=(A^{-1/2} f_n)\otimes\cdots \otimes (A^{-1/2}f_n)		&\text{for }f_1,\ldots, f_n\in \H_\R\cap\text{dom}{A^{-1/2}}.\notag
	\end{align}
Denote by $\sigma_t^{\varphi_q}(\cdot)= \Delta^{it} \cdot \Delta^{-it}$ the modular automorphism group of $\varphi_q$.\par
Henceforth we assume $\dim(\H_\R)=N<\infty$. Consequently $A$ and $A^{-1}$ are bounded operators and hence $\{\sigma_t^{\varphi_q}\}_{t\in\R}$ extends to $\{\sigma_z^{\varphi_q}\}_{z\in\C}$. In particular for $a,b\in M$,
	\begin{align*}
		\varphi(ab)&=\< a^*\Omega, b\Omega\>_{U,q}=\< S a\Omega, b\Omega\>_{U,q}=\<Jb\Omega, \Delta^\frac{1}{2} a\Omega\>_{U,q}\\
				&=\< \Delta \Delta^{-\frac{1}{2}} Jb\Omega,a\Omega\>_{U,q} = \< \Delta b^*\Omega, a\Omega\>_{U,q}= \varphi(\sigma_{i}^{\varphi_q}(b) a).
	\end{align*}
Moreover, the action of $\Delta$ in (\ref{Tomita-Takesaki_formulas}) extends to $f_1,\ldots, f_n\in \H$.\par
From Remark 2.12 in \cite{S97} it follows that for a suitable orthonormal basis $\{e_1,\ldots,e_N\}$ of $(\H_\R,\<\cdot,\cdot\>)$, the generator $A$ can be represented as a matrix of the form
	\begin{equation}\label{matrix_form_A}
		A=\text{diag}\left( A_1,\ldots, A_L,1,\ldots,1\right),
	\end{equation}
where for each $k\in\{1,\ldots,L\}$
	\begin{equation}\label{matrix_form_A_2}
		A_k=\frac{1}{2}\left(\begin{array}{cc}
						\lambda_k+\lambda_k^{-1}		& -i\left(\lambda_k-\lambda_k^{-1}\right)	\\
						i\left(\lambda_k-\lambda_k^{-1}\right)	& \lambda_k+\lambda_k^{-1}	\end{array}\right)\in M_2(\C),
	\end{equation}
and $\lambda_k>0$. Note that
	\begin{equation*}
		A_k^{it}=\left(\begin{array}{cc}	\cos(t\log{\lambda_k})	&	-\sin(t\log{\lambda_k})\\
								\sin(t\log{\lambda_k}) 	& 	\cos(t\log{\lambda_k})\end{array}\right),
	\end{equation*}
which is a unitary matrix such that $(A_k^{it})^*=(A_k^{it})^\text{T}=A_k^{-it}$. $A$ has the following properties:
	\begin{enumerate}
		\item[1.] $\text{spectrum}(A)=\left\{1,\lambda_1^{\pm 1},\ldots, \lambda_L^{\pm 1}\right\}$;
		\item[2.] $A^\text{T}=A^{-1}$;
		\item[3.] $\left( A^{it}\right)^*=\left( A^{it}\right)^\text{T}=A^{-it}$; and
		\item[4.] for any fixed $i\in\{1,\ldots, N\}$, 
			\begin{equation*}
				\sum_{j=1}^N \left|[A]_{ij}\right|\leq \max\left\{1,\lambda_1^{\pm 1},\ldots, \lambda_L^{\pm 1}\right\}\leq \|A\|.
			\end{equation*}
	\end{enumerate}
For each $j=1,\ldots, N$, let $X_j^{(q)}=s_q(e_j)$ and write $X^{(q)}=(X_1^{(q)},\ldots, X_N^{(q)})$. Since $s_q$ is real linear, it follows that $M_q=W^*(X_1^{(q)},\ldots, X_N^{(q)})$. We observe that
	\begin{align*}
		\sigma_z^{\varphi_q}(X_j^{(q)})=\sum_{k=1}^N [A^{iz}]_{jk} X_k^{(q)},\qquad \forall z\in \C,
	\end{align*}
or using the vector notation:
	\begin{align}\label{modular_semicircular}
		\sigma_z^{\varphi_q}(X^{(q)})= A^{iz} X^{(q)},\qquad \forall z\in\C.
	\end{align}
Indeed, using (\ref{Tomita-Takesaki_formulas}) it is easy to see that
	\begin{align*}
		\sigma_z^{\varphi_q}(l_q(e_j))&= l_q(A^{-iz} e_j)\\
		\sigma_z^{\varphi_q}(l_q^*(e_j))&=l_q^*( A^{-i\bar{z}} e_j).
	\end{align*}
Equation (\ref{modular_semicircular}) follows from the above properties of $A$, the linearity of $l_q$, and the conjugate linearity of $l_q^*$.

%	Derviations on $M_q$
%%%%%%%%%%%%%%%%%%%%%%%%%%%%%%%%%%%%%%%%
\subsection{Derivations on $M_q$}\label{derivations_on_M_q}

For the remainder of this section we will consider a single fixed $q\in (-1,1)$, so that we may repress the superscript $(q)$ notation on $X^{(q)}_j$, and write $\mathscr{P}$ for the $*$-subalgebra $\C\<X_1,\ldots, X_N\>\subset M_q$ of non-commutative polynomials in $N$-variables. We also simplify notation with $M:=M_q$, $\varphi:=\varphi_q$, and $\sigma_z:=\sigma_z^{\varphi_q}$ for $z\in \C$.\par
For each $j\in\{1,\ldots,N\}$ we let $\delta_j\colon \mathscr{P}\rightarrow \mathscr{P}\otimes\mathscr{P}^{op}$ be Voiculescu's free-difference quotient:
	\begin{align*}
		\delta_j(X_{i_1}\cdots X_{i_n})=\sum_{k=1}^n \delta_{j=i_k} X_{i_1}\cdots X_{i_{k-1}}\otimes \left(X_{i_{k+1}}\cdots X_{i_n}\right)^\circ;
	\end{align*}
that is, $\delta_j$ is the unique derivation satisfying $\delta_j(X_i)=\delta_{j=i}1\otimes 1$. We set the following conventions for working with elementary tensors in $\mathscr{P}\otimes\mathscr{P}^{op}$:
	\begin{itemize}
		\item $(a\otimes b^\circ)\#(c\otimes d^\circ):= (ac)\otimes (b^\circ d^\circ)=(ac)\otimes (db)^\circ$;
		\item $(a\otimes b^\circ)\#c=acb$;
		\item $(a\otimes b^\circ)^*:=a^*\otimes (b^*)^\circ$;
		\item $(a\otimes b^\circ)^\dagger:= b^*\otimes (a^*)^\circ$;
		\item $(a\otimes b^\circ)^\diamond:= b\otimes a^\circ$;
		\item $m(a\otimes b^\circ):=ab$.
	\end{itemize}
We also define the left and right actions of $\mathscr{P}$ as:
	\begin{itemize}
		\item $c\cdot (a\otimes b^\circ):=(ca)\otimes b^\circ$;
		\item $(a\otimes b^\circ)\cdot c:=a\otimes (bc)^\circ$.
	\end{itemize}
Note that 
	\begin{align*}
		c\cdot (a\otimes b^\circ)&= (c\otimes 1^\circ)\# (a\otimes b^\circ),\ \text{and}\\
		(a\otimes b^\circ)\cdot c &= (1\otimes c^\circ )\# (a\otimes b^\circ).
	\end{align*}
We will usually suppress the notation ``$\circ$" and at times represent tensors  of monomials in $\mathscr{P}$ diagrammatically as follows:
	\begin{align}\label{box_notation}
	\begin{tikzpicture}[baseline]
	\draw[thick] (0,-.5) rectangle (2.5,.5);
	\node[left] at (0,0) {$X_{i_1}\cdots X_{i_n}\otimes X_{j_1}\cdots X_{j_m}=$};
	\node at (1.25,.25) {$i_1i_2\cdots i_{n-1}i_n$};
	\node at (1.25,-.25) {$j_mj_{m-1}\cdots j_2j_1$};
	\end{tikzpicture}.
	\end{align}
Then multiplication is neatly expressed as:
	\begin{align*}
	\begin{tikzpicture}[baseline]
	\draw[thick] (0,-.5) rectangle (1.5,.5);
	\node at (.75,.25) {$i_1\cdots i_n$};
	\node at (.75,-.25) {$j_m\cdots j_1$};
	\node at (1.75,0) {$\#$};
	\draw[thick] (2,-.5) rectangle (3.5,.5);
	\node at (2.75,.25) {$k_1\cdots k_p$};
	\node at (2.75,-.25) {$l_q\cdots l_1$};
	\node at (3.75,0) {$=$};
	\draw[thick] (4,-.5) rectangle (6.6,.5);
	\node at (5.3,.25) {$i_1\cdots i_nk_1\cdots k_p$};
	\node at (5.3,-.25) {$j_m\cdots j_1l_q\cdots l_1$};
	\end{tikzpicture}.
	\end{align*}
We note the involutions $*,\dagger,\diamond$ amount to horizontal reflection, vertical reflection, and $180^\circ$ rotation of the diagrams, respectively.\par
For $j,k\in\{1,\ldots, N\}$, we use the shorthand notation
	\begin{align*}
		\alpha_{jk}:=\left[ \frac{2}{1+A}\right]_{jk} =\<e_k,e_j\>_U.
	\end{align*}
Note that the last equality implies $\overline{\alpha_{jk}}=\alpha_{kj}$, $\alpha_{jj}=1$, and $|\alpha_{jk}|\leq 1$ for all $j,k\in\{1,\ldots, N\}$.\par
Let $\Xi_q \in HS(\mc{F}_q(\H))$ be the Hilbert-Schmidt operator on $\mc{F}_q(\H)$ given by the sum $\sum_{n=0}^\infty q^n P_n$ where $P_n\colon \mc{F}_q(\H) \rightarrow \H^{\otimes n}$ is the projection onto vectors of length $n$. We identify the Hilbert space generated by the GNS construction with respect to $\varphi\otimes\varphi^{op}$ with $L^2(M\bar{\otimes}M^{op},\varphi\otimes\varphi^{op})\cong HS(\mc{F}_q(\H))$ via $a\otimes b^\circ\mapsto \<\Omega, b\ \cdot\>a\Omega$ (\emph{cf.} Proposition 5.11 in \cite{V94}); in particular, $\Xi_0= P_0$ corresponds to $1\otimes 1$. Realize that the involution $\dagger$ defined above corresponds precisely with the adjoint operation in $HS(\mc{F}_q(\H))$. Consequently, $\Xi_q^\dagger=\Xi_q$ since, as a real sum of projections, it is a self-adjoint Hilbert-Schmidt operator.\par
For each $j=1,\ldots, N$ we define the derivation $\partial_j^{(q)}\colon\mathscr{P}\rightarrow \mathscr{P}\otimes \mathscr{P}^{op}$ by
	\begin{align*}
		\partial_j^{(q)}(P)=\sum_{k=1}^N \alpha_{kj}\delta(P)\#\Xi_q.
	\end{align*}
That is, $\partial_j^{(q)}$ is the unique derivation satisfying $\partial_j^{(q)}(X_i)=\alpha_{ij}\Xi_q$. We shall also consider the derivations
	\begin{align*}
		\bar{\partial}_j^{(q)}(P):=\sum_{k=1}^N \alpha_{jk}\delta_k(P)\# \Xi_q\qquad\text{ and }\qquad \tilde{\partial}_j^{(q)} :=\sum_{k=1}^N \alpha_{jk} \left(\delta_k(P)\# \Xi_q\right)^\diamond,
	\end{align*}
which are related to $\partial_j^{(q)}$ by
	\begin{align*}
		\partial_j^{(q)}(P)^\dagger=\bar{\partial}_j^{(q)}(P^*)\qquad\text{ and }\qquad \partial_j^{(q)}(P)^* = \tilde{\partial}_j^{(q)}(P^*).
	\end{align*}
We remark that in the tracial case ($U_t=1_t$), we have $\bar{\partial}_{j}^{(q)}=[\cdot, r_q(e_j)]$, where $r_q(e_j)$ is the right $q$-creation operator. This is precisely the derivation considered in Lemma 27 of \cite{D}.\par
From (\ref{modular_semicircular}) we see that
	\begin{align*}
		\partial_j^{(q)}(\sigma_{it}(X_k))=\sum_{l=1}^N [A^{-t}]_{kl} \alpha_{lj} \Xi_q = \left[\frac{2A^{-t}}{1+A}\right]_{kj} \Xi_q,
	\end{align*}
and thus $(\sigma_{-it}\otimes\sigma_{-it})\circ\partial_j\circ\sigma_{it}$ defines the unique derivation satisfying $X_k\mapsto \left[\frac{2A^{-t}}{1+A}\right]_{kj}\Xi_q$. In particular, since
	\begin{align*}
		\left[\frac{2A}{1+A}\right]_{kj}=\left[\frac{2}{1+A^{-1}}\right]_{kj}=\left[\frac{2}{1+A}\right]_{jk},
	\end{align*}
we see that
	\begin{equation}\label{differentiating_sigma_with_q}
		(\sigma_i\otimes\sigma_i)\circ\partial_j^{(q)}\circ\sigma_{-i}=\bar{\partial}_j^{(q)}.
	\end{equation}
The motivation for considering such derivations is precisely the following proposition.

\begin{prop}\label{adjoint_of_q-derivations}
View $\partial_j^{(q)}$ and $\bar{\partial}_j^{(q)}$ as densely defined operators from $L^2(\mathscr{P},\varphi)$ to $L^2(\mathscr{P}\otimes \mathscr{P}^{op},\varphi\otimes\varphi^{op})$. Then $1\otimes 1\in \dom{\partial_j^{(q)*}}$ with
	\begin{equation}\label{adjoint_of_1}
		\partial_j^{(q)*}(1\otimes 1)=X_j.
	\end{equation}
Moreover, $1\otimes1\in\dom{\bar{\partial}_j^{(q)*}}$ with
	\begin{equation}
		\bar{\partial}_j^{(q)*}(1\otimes 1)=\sigma_{-i}(X_j).
	\end{equation}
\end{prop}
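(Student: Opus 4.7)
The plan is to verify, for every $P \in \mathscr{P}$, the identity
\[\<1\otimes 1, \partial_j^{(q)}(P)\>_{\varphi\otimes\varphi^{op}} = \varphi(X_j P),\]
which, after taking complex conjugates and using $X_j^* = X_j$, is equivalent to $\<P, X_j\>_\varphi = \<\partial_j^{(q)}(P), 1\otimes 1\>$; this yields (\ref{adjoint_of_1}) by the very definition of the adjoint. By linearity, it suffices to treat monomials $P = X_{i_1}\cdots X_{i_n}$.

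Under the isomorphism $L^2(M\bar\otimes M^{op}, \varphi\otimes\varphi^{op}) \cong HS(\mc{F}_q(\H))$, the element $1\otimes 1$ corresponds to the rank-one projection $P_0$ onto $\C\Omega$, the $\#$-action of $a\otimes b^\circ$ on $HS$ becomes two-sided multiplication $T\mapsto aTb$, and $\<P_0, T\>_{HS} = \<\Omega, T\Omega\>_{U,q}$. Setting $L_l := X_{i_1}\cdots X_{i_{l-1}}$ and $R_l := X_{i_{l+1}}\cdots X_{i_n}$, this gives
\[\<1\otimes 1, \partial_j^{(q)}(P)\> = \sum_{l=1}^n \alpha_{i_l j}\,\<\Omega, L_l\,\Xi_q\, R_l\Omega\>_{U,q}.\]
On the other hand, using $X_j = l_q(e_j) + l_q^*(e_j)$ together with $l_q^*(e_j)\Omega = 0$ and $l_q(e_j)\Omega = e_j$, one has $\varphi(X_j P) = \<e_j, P\Omega\>_{U,q}$. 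Since this is $\C$-linear in $e_j$, the problem reduces to verifying the vector identity
\[\text{(degree-one part of } P\Omega) = \sum_{l=1}^n \<\Omega, L_l\,\Xi_q\, R_l\Omega\>_{U,q}\cdot e_{i_l}.\]

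To establish this I would expand both sides via the $q$-Wick formula. Writing $\Xi_q = q^N$ (the number operator) and inserting the partial $q$-Wick expansions of $L_l^*\Omega$ and $R_l\Omega$, the $l$-th inner product becomes a sum over triples $(\pi_L, \pi_R, \sigma)$, where $\pi_L$, $\pi_R$ are partial pairings on $\{1,\ldots, l-1\}$ and $\{l+1,\ldots, n\}$ with an equal number $r$ of unpaired positions, and $\sigma$ is a bijection between these unpaired positions. Each such triple reassembles into a full pairing $\tilde\pi$ of $\{1,\ldots, l-1\}\cup\{l+1,\ldots, n\}$, and the accumulated $q$-power (from crossings of $\pi_L$ and $\pi_R$, inversions of $\sigma$, and the extra $q^r$ arising from $\Xi_q$) equals $q^{c(\tilde\pi) + r(\tilde\pi)}$, where $r(\tilde\pi)$ is the number of pairs of $\tilde\pi$ straddling the missing position $l$. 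Multiplying by $\alpha_{i_l j}$ and summing over $l$ then recovers the full $q$-Wick expansion $\varphi(X_j X_{i_1}\cdots X_{i_n}) = \sum_\pi q^{c(\pi)}\prod_{(a,b)\in\pi, a<b}\alpha_{i_b, i_a}$ (with $i_0 := j$), partitioned by the partner $l$ of $0$ in $\pi$: the chord $(0, l)$ carries weight $\alpha_{i_l, j}$ and crosses precisely the $r(\tilde\pi)$ straddling pairs.

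For the second statement, I would invoke (\ref{differentiating_sigma_with_q}) to write $\bar\partial_j^{(q)}(P) = (\sigma_i\otimes\sigma_i)\,\partial_j^{(q)}(\sigma_{-i}(P))$. Since $\mathscr{P}$ is $\sigma$-analytic and $\varphi\circ\sigma_z = \varphi$ on it, a direct check on elementary tensors gives $\<1\otimes 1, (\sigma_i\otimes\sigma_i)T\> = \<1\otimes 1, T\>$, whence
\[\<1\otimes 1, \bar\partial_j^{(q)}(P)\> = \varphi(X_j\,\sigma_{-i}(P)) = \varphi(\sigma_i(X_j)\, P) = \<\sigma_{-i}(X_j), P\>_\varphi,\]
the second equality by KMS ($\varphi(ab) = \varphi(\sigma_i(b) a)$) and the third using $\sigma_{-i}(X_j)^* = \sigma_i(X_j^*) = \sigma_i(X_j)$. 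This yields $\bar\partial_j^{(q)*}(1\otimes 1) = \sigma_{-i}(X_j)$. The main obstacle is the combinatorial $q$-crossing bookkeeping in the third paragraph; the remaining steps are formal.
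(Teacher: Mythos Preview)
Your proposal is correct and tracks the paper's proof closely: both reduce the first identity to showing that the coefficient of $e_{i_l}$ in $P_1P\Omega$ equals $\langle\Omega, L_l\,\Xi_q\,R_l\Omega\rangle_{U,q}$, argued via pairing diagrams organized by the number of chords straddling the free position $l$. The one execution difference worth noting concerns precisely what you flag as the main obstacle. Rather than carrying out the full $(\pi_L,\pi_R,\sigma)$ crossing bookkeeping, the paper introduces an orthonormal basis $\{r_{\ul{j}}\}_{|\ul{j}|=m}$ of each $\H^{\otimes m}$ (Gram--Schmidt on monomials), so that under the $HS$ identification $P_m=\sum_{|\ul{j}|=m} r_{\ul{j}}\otimes r_{\ul{j}}^*$; this converts $\sum_m q^m\langle P_m L_l^*\Omega, P_m R_l\Omega\rangle_{U,q}$ into $\varphi\otimes\varphi^{op}(L_l\otimes R_l\#\Xi_q)$ in one line and confines the diagrammatic work to the single observation about how many chords cross the free leg. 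For the second claim the paper uses the relation $\partial_j^{(q)}(P^*)=\bar\partial_j^{(q)}(P)^\dagger$ together with $\diamond$-invariance of $\varphi\otimes\varphi^{op}$, whereas you invoke \eqref{differentiating_sigma_with_q} and $\sigma$-invariance of $\varphi$; both are one-line formal reductions to the first part.
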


\begin{rem}
The above proposition states that $X_1,\ldots, X_N$ (resp. $\sigma_{-i}(X_1),\ldots, \sigma_{-i}(X_N)$) are \emph{conjugate variables to $X$ with respect to the derivations $\partial_1^{(q)},\ldots, \partial_N^{(q)}$}  (resp. $\bar{\partial}_1^{(q)},\ldots, \bar{\partial}_N^{(q)}$) (\emph{cf.} Section 3 of \cite{V94}).
\end{rem}

\begin{proof}
Consider the monomial $P=X_{i_1}\cdots X_{i_n}\in\mathscr{P}$. Then,
	\begin{align*}
		\varphi(X_j P)=\<X_j\Omega, P\Omega\>_{U,q} = \< P_1X_j\Omega, P\Omega\>_{U,q} = \<P_1X_j\Omega, P_1P\Omega\>_{U,q},
	\end{align*}
where $P_1\in \B (\mc{F}_q(\H))$ is the projection onto tensors of length one. As $P$ is a product of the $X_{i_k}$, it is clear that $P_1P\Omega$ will be a linear combination of $e_{i_1},\ldots, e_{i_n}$, say $P_1P\Omega=\sum_{k=1}^n c_k e_{i_k}$. We claim that
	\begin{align*}
		c_k =\sum_{l=0}^\infty q^l \< P_l X_{i_{k-1}}\cdots X_{i_1}\Omega, P_l X_{i_{k+1}}\cdots X_{i_n}\Omega\>_{U,q}.
	\end{align*}
Indeed, diagrammatically each term contributing to $c_k$ is a pairing of the vectors $e_{i_1},\ldots, e_{i_n}$ with $e_{i_k}$ excluded. We can arrange such pairings according to the number of pairs whose connecting chords cross over $e_{i_k}$. Fix $l\geq 0$ and consider pairings with $l$ chords passing over $e_{i_k}$. Write $P=A_kX_{i_k}B_k$, then $P_lB_k\Omega$ gives pairings within $B_k$ that leave $l$ vectors unpaired. Hence $\<\Omega, A_kP_lB_k\Omega\>$ counts the pairings in which there are exactly $l$ pairs with one vector coming from $A_k$ and one coming from $B_k$. Since the cost of skipping over $e_{i_k}$ $l$ times is $q^l$ we see that
	\begin{equation*}
		c_k = \sum_{l=0}^\infty q^l \<\Omega, A_kP_l B_k\Omega\>_{U,q} = \sum_{l=0}^\infty q^l \< P_l A_k^*\Omega, P_l B_k\Omega\>_{U,q},
	\end{equation*}
as claimed. Thus
	\begin{align*}
		\< P_1X_j\Omega, P_1P\Omega\>_{U,q} =\sum_{k=1}^n \< P_1 X_j\Omega, e_{i_k}\>_{U,q} c_k = \sum_{k=1}^n \<e_j, e_{i_k}\>_U  \sum_{l=0}^\infty q^l \< P_l A_k^*\Omega, P_l B_k\Omega\>_{U,q}.
	\end{align*}\par
Now, we inductively orthonormalize the monomials $X_{\ul{i}}\in\mathscr{P}$ with respect to $\<\cdot\ \Omega,\cdot\ \Omega\>_{U,q}$ to obtain a basis $\{r_{\ul{j}}\}_{|\ul{j}|\geq 0}$ so that for each $l$, $\text{span}\{r_{\ul{j}}\colon |\ul{j}|=l\}=\text{span}\{X_{\ul{i}}\colon |\ul{i}|=l\}$. Then $P_lB_k=\sum_{|\ul{j}|=l} \<r_{\ul{j}}\Omega, B\Omega\>_{U,q} r_{\ul{j}}$ and using our identification with $L^2(M\bar{\otimes}M^{op},\varphi\otimes\varphi^{op})$ we see that $P_l = \sum_{|\ul{j}|=l} r_{\ul{j}}\otimes r_{\ul{j}}^*$. Thus we have
	\begin{align*}
		\varphi(X_jP) &= \sum_{k=1}^n \<e_j, e_{i_k}\>_{U}\sum_{l=0}^\infty q^l \< P_l A_k^*\Omega, P_l B_k\Omega\>_{U,q}= \sum_{k=1}^n \< e_j, e_{i_k}\>_U \sum_{l=0}^\infty q^l \sum_{|\ul{j}|=l}\< A_k^*\Omega, r_{\ul{j}}\Omega\>_{U,q}\<r_{\ul{j}}\Omega, B_k\Omega\>_{U,q}\\
			&= \sum_{k=1}^n \<e_j, e_{i_k}\>_U \sum_{l=0}^\infty q^l \sum_{|\ul{j}|=l} \varphi\otimes\varphi^{op}\left( A_k\otimes B_k \# r_{\ul{j}}\otimes r_{\ul{j}}^* \right) = \sum_{k=1}^n \<e_j, e_{i_k}\>_U \varphi\otimes \varphi^{op}\left( A_k\otimes B_k \#\Xi_q\right) \\
			&= \varphi\otimes \varphi^{op}\left( \partial_j^{(q)}P \right),
	\end{align*}
or $\<X_j, P\>_{\varphi} = \< 1\otimes 1, \partial_j^{(q)}P\>_{\varphi\otimes\varphi^{op}}$, which implies $\partial_j^{(q)*}(1\otimes 1)=X_j$.\par
Now,
	\begin{align*}
		\<\sigma_{-i}(X_j),P\>_{\varphi}&=\varphi(\sigma_i(X_j)P)=\varphi(PX_j)=\<P^*,\partial_j^{(q)*}(1\otimes 1)\>_{\varphi}=\<\bar{\partial}_j^{(q)}(P)^\dagger,1\otimes 1\>_{\varphi}\\
			&=\varphi\otimes\varphi^{op}( \bar{\partial}_j^{(q)}(P)^\diamond)=\varphi\otimes\varphi^{op}(\bar{\partial}_j^{(q)}(P))=\<1\otimes 1, \bar{\partial}_j^{(q)}(P)\>_{\varphi},
	\end{align*}
so that $1\otimes 1\in \text{dom}\left(\bar{\partial}_j^{(q)}\right)$ and $\bar{\partial}_j^{(q)*}(1\otimes 1)=\sigma_{-i}(X_j)$.
\end{proof}

\begin{cor}\label{difference_quotient_adjoint}
Viewing $\partial_j^{(q)}\colon L^2(\mathscr{P},\varphi)\rightarrow L^2(\mathscr{P}\otimes\mathscr{P}^{op},\varphi\otimes\varphi^{op})$ as a densely defined operator we have $\mathscr{P}\otimes\mathscr{P}^{op}\subset \dom{\partial_j^{(q)*}}$. In particular, if $\eta\in \dom{\partial_j^{(q)*}}$ and $P\in\mathscr{P}$ then
	\begin{align*}
		\partial_j^{(q)*}(\eta\cdot P)&=\partial_j^{(q)*}(\eta)\sigma_{-i}(P) - 1\otimes\varphi^{op} \left(\eta\# \hat{\sigma}^{\varphi}_i\circ\bar{\partial}_j^{(q)}(P)^\diamond\right),\ \text{and}\\
		\partial_j^{(q)*}( P\cdot\eta) &= P\partial_j^{(q)*}(\eta) - \varphi\otimes 1^{op}\left( \hat{\sigma}_i(\eta)\# \bar{\partial}_j^{(q)}(P)^\diamond\right),
	\end{align*}
where $\hat{\sigma}_z=\sigma_z\otimes \sigma_{\bar{z}}$ with $z\in \C$. In particular, for $P,Q\in \mathscr{P}$ we have
	\begin{align}\label{adjoint_formula}
		\partial_j^{(q)*}(P\otimes Q) = [1\otimes\sigma_{-i}](P\otimes Q)\# X_j -m\circ\left( 1\otimes\varphi\otimes \sigma_{-i}\right)\circ\left(1\otimes\bar{\partial}_j^{(q)}+\bar{\partial}_j^{(q)}\otimes 1\right)(P\otimes Q),
	\end{align}
or equivalently (using Equation (\ref{differentiating_sigma_with_q}))
	\begin{align}\label{adjoint_formula_2}
		\partial_j^{(q)*}(P\otimes Q) = [1\otimes\sigma_{-i}](P\otimes Q)\#X_j-m\circ\left( 1\otimes\varphi\otimes 1\right)\circ\left(1\otimes\partial_j^{(q)}+\bar{\partial}_j^{(q)}\otimes 1\right)\circ[1\otimes\sigma_{-i}](P\otimes Q).
	\end{align}
\end{cor}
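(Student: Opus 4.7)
The plan is to derive two Leibniz-type formulas for $\partial_j^{(q)*}(\eta\cdot P)$ and $\partial_j^{(q)*}(P\cdot\eta)$ when $\eta\in\dom{\partial_j^{(q)*}}$, and then use Proposition~\ref{adjoint_of_q-derivations} as the base case to bootstrap both the containment claim $\mathscr{P}\otimes\mathscr{P}^{op}\subset\dom{\partial_j^{(q)*}}$ and the explicit formula (\ref{adjoint_formula}).

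To prove the right-action formula, I would compute, for $R\in\mathscr{P}$,
$$\langle\partial_j^{(q)*}(\eta\cdot P),R\rangle_\varphi=\langle\eta\cdot P,\partial_j^{(q)}(R)\rangle_{\varphi\otimes\varphi^{op}}=\langle\eta,\partial_j^{(q)}(R)\cdot P^*\rangle_{\varphi\otimes\varphi^{op}},$$
where the last equality uses the elementary-tensor identity $\langle\xi\cdot P,\mu\rangle=\langle\xi,\mu\cdot P^*\rangle$ (valid because $\langle x^\circ,y^\circ\rangle_{\varphi^{op}}=\varphi(yx^*)$). Next, apply the Leibniz rule $\partial_j^{(q)}(R)\cdot P^*=\partial_j^{(q)}(RP^*)-R\cdot\partial_j^{(q)}(P^*)$ and handle the pieces separately. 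The piece involving $\partial_j^{(q)}(RP^*)$ yields $\langle\partial_j^{(q)*}(\eta),RP^*\rangle_\varphi$, which rearranges to $\langle\partial_j^{(q)*}(\eta)\sigma_{-i}(P),R\rangle_\varphi$ via the KMS identity $\varphi(XRP^*)=\varphi(\sigma_i(P^*)XR)$ combined with $\sigma_i(P^*)=\sigma_{-i}(P)^*$. For the remaining piece $\langle\eta,R\cdot\partial_j^{(q)}(P^*)\rangle$, I would expand $\partial_j^{(q)}(P^*)=\bar\partial_j^{(q)}(P)^\dagger$ in components, invoke KMS a second time to place $R$ at the right end of the resulting scalar expression, and identify the matching coefficient sum with $1\otimes\varphi^{op}(\eta\#(\hat\sigma_i\circ\bar\partial_j^{(q)})(P)^\diamond)$. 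Here the $\diamond$ arises naturally from the transposition inherent in $\dagger$, while absorbing the KMS twists via $\varphi\circ\sigma_z=\varphi$ converts expressions such as $\varphi(\sigma_i(c)y)$ into $\varphi(yc)$, matching the required combinatorics. The left-action formula is derived in the same manner, with the simplification that moving $P$ past $\partial_j^{(q)}(R)$ on the left requires no KMS, producing the clean leading term $P\partial_j^{(q)*}(\eta)$ immediately.

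With these two identities in hand, an obvious induction shows that $P\cdot\eta\cdot Q\in\dom{\partial_j^{(q)*}}$ whenever $\eta\in\dom{\partial_j^{(q)*}}$ and $P,Q\in\mathscr{P}$; applied to $\eta=1\otimes 1$, this establishes $\mathscr{P}\otimes\mathscr{P}^{op}\subset\dom{\partial_j^{(q)*}}$. To obtain (\ref{adjoint_formula}), apply the right-action formula to $\eta=1\otimes 1$ to compute $\partial_j^{(q)*}(1\otimes Q^\circ)$, then apply the left-action formula with $\eta=1\otimes Q^\circ$ to compute $\partial_j^{(q)*}(P\otimes Q^\circ)$; after using $\hat\sigma_i(1\otimes 1)=1\otimes 1$ the two correction terms combine into precisely $m\circ(1\otimes\varphi\otimes\sigma_{-i})\circ(1\otimes\bar\partial_j^{(q)}+\bar\partial_j^{(q)}\otimes 1)(P\otimes Q)$. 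Finally, (\ref{adjoint_formula_2}) is obtained from (\ref{adjoint_formula}) by substituting (\ref{differentiating_sigma_with_q}) to trade the inner $\bar\partial_j^{(q)}$ for $\partial_j^{(q)}$ conjugated by $\sigma_{\pm i}$, with the outer $1\otimes\sigma_{-i}$ absorbing the leftover twist.

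The main obstacle I expect is the bookkeeping between the three involutions $*,\dagger,\diamond$ and the complex-time modular shifts $\sigma_{\pm i}$: a misplaced shift easily produces a formula off by $\sigma_{\pm 2i}$, and one must be careful to distinguish parsings like $(\hat\sigma_i\circ\bar\partial_j^{(q)})(P)^\diamond$ from $\hat\sigma_i(\bar\partial_j^{(q)}(P)^\diamond)$, which differ by exactly such a factor. Systematic use of $\sigma_z(a)^*=\sigma_{\bar z}(a^*)$, $\varphi\circ\sigma_z=\varphi$, and the relation $(\alpha^*)^\diamond=\alpha^\dagger$, together with the diagrammatic notation from (\ref{box_notation}), should keep the computation manageable.
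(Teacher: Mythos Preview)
Your proposal is correct and follows essentially the same route as the paper: compute $\langle\eta\cdot P,\partial_j^{(q)}(Q)\rangle_\otimes=\langle\eta,\partial_j^{(q)}(Q)\cdot P^*\rangle_\otimes$, apply the Leibniz rule $\partial_j^{(q)}(Q)\cdot P^*=\partial_j^{(q)}(QP^*)-Q\cdot\partial_j^{(q)}(P^*)$, use the KMS identity (which the paper packages as $\varphi\otimes\varphi^{op}(A\otimes B\#C\otimes D)=\varphi\otimes\varphi^{op}(\hat\sigma_i(C\otimes D)\#A\otimes B)$) together with $\partial_j^{(q)}(P^*)=\bar\partial_j^{(q)}(P)^\dagger$ to identify the correction term, and then combine both formulas with the base case $\partial_j^{(q)*}(1\otimes 1)=X_j$ from Proposition~\ref{adjoint_of_q-derivations}. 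The only cosmetic difference is that the paper records the preliminary identity $\hat\sigma_i((a\otimes b)^\dagger)=\hat\sigma_i(a\otimes b)^\dagger$ explicitly before starting the main computation, whereas you defer such bookkeeping to the end.
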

\begin{proof}
We make the following notational simplifications: $\<\cdot,\cdot\>_{\varphi}=\<\cdot,\cdot\>$ and $\<\cdot,\cdot\>_{\varphi\otimes\varphi^{op}} = \<\cdot,\cdot\>_\otimes$. First note that for $A,B,C,D\in \mathscr{P}$ we have
	\begin{align*}
		\varphi\otimes\varphi^{op}( A\otimes B\# C\otimes D))&=\varphi\otimes \varphi^{op
}( (AC)\otimes (DB))= \varphi(AC)\varphi(DB)=\varphi(\sigma_i(C)A)\varphi(B\sigma_{-i}(D)) \\
			&= \varphi\otimes \varphi^{op} \left( (\sigma_i(C)\otimes \sigma_{-i}(D))\# A\otimes B\right)=\varphi\otimes \varphi^{op} (\hat{\sigma}_i(C\otimes D)\# A\otimes B).
	\end{align*}
Also observe that
	\begin{align*}
		\hat{\sigma}_i\left( (a\otimes b)^\dagger\right)= \hat{\sigma}_i (b^*\otimes a^*)= \sigma_i(b^*)\otimes \sigma_{-i}(a^*)= \sigma_{-i}(b)^* \otimes \sigma_i(a)^* = \hat{\sigma}_i(a\otimes b)^\dagger.
	\end{align*}
Now, let $Q\in\mathscr{P}$, then
	\begin{align*}
		\<\eta\cdot P, \partial_j^{(q)}(Q)\>_\otimes&=\<\eta, \partial_j^{(a)}(Q)\cdot P^*\>_\otimes=\<\eta, \partial_j^{(q)}(QP^*) - Q\cdot\partial_j^{(q)}(P^*)\>_\otimes\\
			&=\varphi\left(\left[\partial_j^{(q)*}(\eta)\right]^* QP^*\right) - \varphi\otimes\varphi^{op}\left(\hat{\sigma}_i\left(\partial_j^{(q)}(P^*)\right)\# \eta^*\# Q\otimes 1\right)\\
			&=\<\partial_j^{(q)*}(\eta)\sigma_{-i}(P),Q\> -\varphi\otimes\varphi^{op} \left( \hat{\sigma}_i\left(\bar{\partial}_j^{(q)}(P)\right)^\dagger \#\eta^*\#Q\otimes 1\right)\\
			&=\<\partial_j^*(\eta)\sigma_{-i}(P) -1\otimes\varphi^{op} \left(\eta\#\hat{\sigma}_i\circ\bar{\partial}_j^{(q)}(P)^\diamond\right), Q\>.
	\end{align*}
Similarly,
	\begin{align*}
		\<P\cdot\eta,\partial_j^{(q)}(Q)\>_\otimes&= \<\eta, P^*\cdot\partial_j^{(q)}(Q)\>_\otimes = \<\eta, \partial_j^{(q)}(P^*Q) - \partial_j^{(q)}(P^*)\cdot Q\>_\otimes\\ 
			&=\<\partial_j^{(q)*}(\eta), P^*Q\> - \varphi\otimes \varphi^{op}\left( \hat{\sigma}_i\circ \partial_j^{(q)}(P^*)\#\eta^*\# 1\otimes Q\right)\\
			&=\<P\partial_j^{(q)*}(\eta), Q\> - \<\sigma_{-i}\left(\varphi\otimes 1^{op}\left( \eta\#\hat{\sigma}_i\circ \bar{\partial}_j^{(q)}(P)^\diamond\right)\right), Q\>\\
			&=\< P\partial_j^{(q)*}(\eta) - [\varphi\otimes 1^{op}]\circ\hat{\sigma}_i\left( \eta\#\hat{\sigma}_i\circ \bar{\partial}_j^{(q)}(P)^\diamond\right), Q\>\\
			&=\< P\partial_j^{(q)*}(\eta) - \varphi\otimes 1^{op}\left( \hat{\sigma}_i(\eta)\# \bar{\partial}_j^{(q)}(P)^\diamond\right), Q\>.
	\end{align*}
Applying both of these formulas and (\ref{adjoint_of_1}) yields
	\begin{align*}
		\partial_j^{(q)*}(P\otimes Q)&=PX_j\sigma_{-i}(Q) -m\left( 1\otimes\left[\sigma_{-i}(\varphi\otimes 1^{op})\bar{\partial}_j^{(q)}\right] + \left[(1\otimes \varphi^{op})\bar{\partial}_j^{(q)}\right]\otimes \sigma_{-i}\right)(P\otimes Q)\\
		&=[1\otimes\sigma_{-i}](P\otimes Q)\# X_j -m\circ\left( 1\otimes\varphi\otimes \sigma_{-i}\right)\circ\left(1\otimes\bar{\partial}_j^{(q)}+\bar{\partial}_j^{(q)}\otimes 1\right)(P\otimes Q).
	\end{align*}
The equivalent form follows easily from Equation (\ref{differentiating_sigma_with_q}).
\end{proof}

For each $j$ we also define the \textit{$\sigma$-difference quotient} $\partial_j\colon\mathscr{P}\rightarrow\mathscr{P}\otimes\mathscr{P}^{op}$ as
	\begin{equation*}
		\partial_j=\sum_{k=1}^N \alpha_{kj}\delta_k,
	\end{equation*}
which is the unique derivation satisfying $\partial_j(X_k)=\alpha_{kj}1\otimes 1$. We see that
	\begin{align*}
		\partial_j^{(q)}(P)=\partial_j(P)\# \Xi_q.
	\end{align*}
For $q=0$, we have $\partial_j=\partial_j^{(0)}$ since $\Xi_0=1\otimes 1$, but otherwise $\partial_j\neq \partial_j^{(q)}$. We also consider
	\begin{align*}
		\bar{\partial}_j(P):=\sum_{k=1}^N \alpha_{jk} \delta_k(P)\qquad\text{ and }\qquad \tilde{\partial}_j(P) :=\sum_{k=1}^N \alpha_{jk} \delta_k(P)^\diamond,
	\end{align*}
which are related to $\partial_j(P)$ in the expected way. Furthermore, we see that
	\begin{equation}\label{differentiating_sigma}
		(\sigma_i\otimes\sigma_i)\circ\partial_j\circ\sigma_{-i}=\bar{\partial}_j,
	\end{equation}
by the same argument that produced (\ref{differentiating_sigma_with_q}).\par
These latter derivations do not depend on $q$ and in fact could have been defined on $\C\<t_1,\ldots, t_N\>$ where the $t_j$ are some abstract indeterminates. This ``universality'' means that they are suitable for stating a Schwinger-Dyson equation (\emph{cf.} Subsection \ref{S-D_section}), which is a non-commutative differential equation satisfied by a unique state under certain restrictions. This uniqueness is precisely what will allow us to to establish the state-preserving isomorphism $M_q\cong M_0$, for small $|q|$. 
	
%	The Banach algebra $\mathscr{P}^{(R,\sigma)}$ and norm $\|\cdot\|_{R,\sigma}$
%%%%%%%%%%%%%%%%%%%%%%%%%%%%%%%%%%%%%%%%
\subsection{The Banach algebra $\mathscr{P}^{(R,\sigma)}$ and norm $\|\cdot\|_{R,\sigma}$.}\label{setup}

We use the convention that an underline connotes a multi-index: $\ul{j}=(j_1,\ldots, j_n) \in N^n$ for some $n$. Then $|\ul{j}|$ gives the length of the multi-index. We write $\ul{j}\cdot \ul{k}$  to mean the concatenation of multi-indices $\ul{j}$ and $\ul{k}$: $(j_1,\ldots,j_n,k_1,\ldots,k_m)$. We also allow concatenation of multi-indices with single indices: $\ul{j}\cdot l=(j_1,\ldots,j_n,l)$. Monomials of the form $X_{j_1}\cdots X_{j_n}$ may be denoted by $X_{\ul{j}}$ when $\ul{j}=(j_1,\ldots, j_n)$. Hence an arbitrary $P\in\mathscr{P}$ may be written as
	\begin{equation*}
		P=\sum_{n=0}^{\deg{P}} \sum_{|\ul{j}|=n} c(\ul{j}) X_{\ul{j}},
	\end{equation*}
with $c(\ul{j})\in\C$. Denote the reversed multi-index by $\ul{j}^{-1}=(j_n,\ldots, j_1)$, then $X_{\ul{j}}^*=X_{\ul{j}^{-1}}$. For each $n\geq 0$, we let $\pi_n\colon \mathscr{P}\rightarrow \mathscr{P}$ be the projection onto monomials of degree $n$:
	\begin{equation*}
		\pi_n(P)=\sum_{|\ul{j}|=n} c(\ul{j}) X_{\ul{j}}.
	\end{equation*}
For $R>0$ we consider the norm $\|\cdot \|_R$ defined in \cite{SG11}:
	\begin{align*}
		\|P\|_R= \sum_{n=0}^{\deg{P}} \sum_{|\ul{j}|=n} |c(\ul{j})| R^n.
	\end{align*}\par
	Denote the \textit{centralizer of $\varphi$ in $\mathscr{P}$} by $\mathscr{P}_{\varphi}=\mathscr{P}\cap M_\varphi$, where $M_\varphi=\{a\in M\colon \sigma_i(a)=a\}$. Observe that as $\sigma_i$ does not change the degree of a monomial (i.e. $[\sigma_i,\pi_n]=0$ for each $n$), $P\in\mathscr{P}_\varphi$ iff $\pi_n(P)\in\mathscr{P}_\varphi$ for every $n\geq 0$.\par
Define a map on monomials by
	\begin{align*}
		\rho(X_{j_1}\cdots X_{j_n})=\sigma_{-i}(X_{j_n})X_{j_1}\cdots X_{j_{n-1}},
	\end{align*}
then by letting $\rho(c)=c$ for $c\in\C$ we can extend this to a linear map $\rho\colon\mathscr{P}\rightarrow\mathscr{P}$. We refer to $\rho^k(P)$ as a \textit{$\sigma$-cyclic rearrangement of $P$}. We note that
	\begin{align*}
		\rho^{-1}(X_{j_1}\cdots X_{j_n})=X_{j_2}\cdots X_{j_n}\sigma_i(X_{j_1}).
	\end{align*}
We define
	\begin{align*}
		\|P\|_{R,\sigma}:=\sum_{n=0}^{\deg{P}} \sup_{k_n\in\Z} \|\rho^{k_n}(\pi_n(P))\|_R \in [0,\infty].
	\end{align*}
Then from the norm properties of $\|\cdot\|_R$ and the subadditivity of the supremum it is easy to see that for $P,Q\in\mathscr{P}$ and $c\in\C$
	\begin{enumerate}
			\item[1.] $\| cP\|_{R,\sigma}=|c| \|P\|_{R,\sigma}$
			
			\item[2.] $\|P+Q\|_{R,\sigma}\leq \|P\|_{R,\sigma}+\|Q\|_{R,\sigma}$
			
			\item[3.] $\|P\|_{R,\sigma}=0\ \Longrightarrow\ P=0$.
	\end{enumerate}
Hence, $\|\cdot\|_{R,\sigma}$ restricted to the set $\{P\in\mathscr{P}\colon \|P\|_{R,\sigma}<\infty\}=:\mathscr{P}^{finite}$ is a norm.\par
Observe that $\rho^k(\sigma_{-im}(X_{j_1}\cdots X_{j_n}))=\rho^{k+mn}(X_{j_1}\cdots X_{j_n})$, so $\|\cdot\|_{R,\sigma}$ is invariant under $\sigma_{im}$, $m\in\Z$. Consequently, $\mathscr{P}_\varphi\subset \mathscr{P}^{finite}$. Indeed, if $P\in\mathscr{P}_\varphi$ then $\pi_n(P)\in\mathscr{P}_\varphi$ for all $n$. Hence $\rho^{k_n}(\pi_n(P))=\rho^{l_n}(\pi_n(P))$ where $k_n\equiv l_n\mod{n}$ and $l_n\in\{0,\ldots,n-1\}$. Consequently
	\begin{align*}
		\|P\|_{R,\sigma} =\sum_{n=0}^{\deg{P}} \max_{l_n\in\{0,\ldots,n-1\}} \|\rho^{l_n}(\pi_n(P))\|_R <\infty.
	\end{align*}
In fact, since $\|\cdot\|_R$ is a Banach norm and 
	\begin{align*}
		\|\sigma_{-i}(X_j)\|_R=\sum_{k=1}^N |[A]_{jk}| R\leq \|A\| R,
	\end{align*}
it is easy to see that $\|\rho^{l_n}(\pi_n(P))\|_R\leq \|A\|^{n-1} \|\pi_n(P)\|_R$ for $n\geq 1$ and any $l_n\in\{1,\ldots,n-1\}$. Thus we have the bound
	\begin{align*}
		\|P\|_{R,\sigma}\leq \|A\|^{\deg{P}-1}\|P\|_R,\qquad \text{for }P\in\mathscr{P}_\varphi.
	\end{align*}
\par
We let $\mathscr{P}^{(R)}$ and $\mathscr{P}^{(R,\sigma)}$ denote the closures of $\mathscr{P}$ and $\mathscr{P}^{finite}$ with respect to the norms $\|\cdot\|_R$ and $\|\cdot\|_{R,\sigma}$, respectively. Both can be thought of as non-commutative power series: the former whose radii of convergence are at least $R$ and the latter whose radii of convergence for each $\sigma$-cyclic rearrangement are at least $R$. Note that $\pi_n$ can be extended to both $\mathscr{P}^{(R)}$ and $\mathscr{P}^{(R,\sigma)}$ with
	\begin{align*}
		\|P\|_R=\sum_{n=0}^\infty \|\pi_n(P)\|_R\qquad\text{ and }\qquad \|P\|_{R,\sigma}=\sum_{n=0}^\infty \|\pi_n(P)\|_{R,\sigma}.
	\end{align*}\par
We claim that $\mathscr{P}^{(R,\sigma)}$ is a Banach algebra. It suffices to show $\|PQ\|_{R,\sigma}\leq \|P\|_{R,\sigma}\|Q\|_{R,\sigma}$. Initially we consider the case $P=\sum_{|\ul{i}|=m}a(\ul{i})X_{\ul{i}}$ and $Q=\sum_{|\ul{j}|=n} b(\ul{j})X_{\ul{j}}$ for $m,n\geq 0$. Fix $k\in\Z$ and write $k=r(m+n)+l$. We treat the case $0\leq l \leq n$, the case $n<l<n+m$ being similar. We also introduce the following notation for $|\ul{i}|=|\ul{j}|=n$ and $t\in\R$:
	\begin{equation*}
		A^t(\ul{i},\ul{j})=\prod_{u=1}^n \left[A^t\right]_{i_u j_u}.
	\end{equation*}
Now,
	\begin{align*}
		\rho^k(PQ)=&\sum_{\substack{|\ul{i}|=m\\ |\ul{j}|=n-l \\ \ul{k}|=l}} a(\ul{i})b\left(\ul{j}\cdot\ul{k}\right)\sigma_{-i(r+1)}(X_{\ul{k}})\sigma_{-ir}(X_{\ul{i}}X_{\ul{j}}) \\
			=& \sum_{\substack{|\ul{i}|=m\\ |\ul{j}|=n-l\\  |\ul{k}|=l}}\sum_{\substack{|\ul{\hat{i}}|=m\\ |\ul{\hat{j}}|=n-l \\ |\ul{\hat{k}}|=l}} a(\ul{i})b\left(\ul{j}\cdot\ul{k}\right) A^{r+1}\left( \ul{k}, \ul{\hat{k}}\right) A^r\left(\ul{i},\ul{\hat{i}}\right)A^r\left(\ul{j},\ul{\hat{j}}\right) X_{\ul{\hat{k}}}X_{\ul{\hat{i}}} X_{\ul{\hat{j}}},
	\end{align*}
hence
	\begin{align*}
		\left\|\rho^k(PQ)\right\|_{R} =&\sum_{\substack{|\ul{\hat{i}}|=m\\ |\ul{\hat{j}}|=n-l \\ |\ul{\hat{k}}|=l}} \left|\sum_{\substack{|\ul{i}|=m\\ |\ul{j}|=n-l\\  |\ul{k}|=l}} a(\ul{i})b\left(\ul{j}\cdot\ul{k}\right) A^{r+1}\left( \ul{k}, \ul{\hat{k}}\right)  A^r\left(\ul{i},\ul{\hat{i}}\right)A^r\left(\ul{j},\ul{\hat{j}}\right)\right| R^{n+m}\\
					=&\sum_{|\ul{\hat{i}}|=m} \left|\sum_{|\ul{i}|=m}a(\ul{i}) A^r\left(\ul{i},\ul{\hat{i}}\right)\right| R^m \sum_{\substack{|\ul{\hat{j}}|=n-l\\ |\ul{\hat{k}}|=l}} \left| \sum_{\substack{|\ul{j}|=n-l\\ |\ul{k}|=l}}b\left(\ul{j}\cdot\ul{k}\right) A^{r+1}\left( \ul{k},\ul{\hat{k}}\right) A^r\left( \ul{j}, \ul{\hat{j}}\right) \right|R^n\\
					=&\|\rho^{rm}(P)\|_R\|\rho^{rn+l}(Q)\|_R\leq \|P\|_{R,\sigma}\|Q\|_{R,\sigma}.
	\end{align*}
Thus $\|PQ\|_{R,\sigma}\leq \|P\|_{R,\sigma}\|Q\|_{R,\sigma}$.\par
Now let $P,Q\in\mathscr{P}^{(R,\sigma)}$ be arbitrary. Then
	\begin{align*}
		\|PQ\|_{R,\sigma}&\leq \sum_{m,n=0}^\infty \| \pi_m(P)\pi_n(Q)\|_{R,\sigma} \leq \sum_{m,n=0}^\infty \|\pi_m(P)\|_{R,\sigma}\|\pi_n(Q)\|_{R,\sigma}\\
					& = \left(\sum_{m=0}^\infty \|\pi_m(P)\|_{R,\sigma}\right)\left(\sum_{n=0}^\infty \|\pi_n(Q)\|_{R,\sigma}\right)=\|P\|_{R,\sigma}\|Q\|_{R,\sigma}.
	\end{align*}
Hence $\mathscr{P}^{(R,\sigma)}$ is a Banach algebra.\par
Since $\|\cdot \|_R$ is dominated by $\|\cdot \|_{R,\sigma}$, we can embed $\mathscr{P}^{(R,\sigma)}$ into $\mathscr{P}^{(R)}$. Furthermore, if $R\geq\|X_1\|,\ldots, \|X_N\|$ then $\|\cdot\|_R$ dominates the operator norm and hence we can embed $\mathscr{P}^{(R)}$ into $M$. From Lemma 4 in \cite{BS} we see that $\|X_j\| \leq \frac{2}{1-|q|}$ for all $j=1,\ldots, N$, so we restrict ourselves to $R\geq \frac{2}{1-|q|}$ from now on and consider $\mathscr{P}^{(R,\sigma)}\subset \mathscr{P}^{(R)}\subset M$ as subalgebras. We let $\mathscr{P}_\varphi^{(R,\sigma)}$ and $\mathscr{P}_\varphi^{(R)}$ denote their respective intersections with $M_\varphi$.\par
We shall also use $\|\cdot\|_{R,\sigma}$ to denote the norm on $\left(\mathscr{P}^{(R,\sigma)}\right)^N$ defined by
	\begin{equation*}
		\| (P_1,\ldots, P_N)\|_{R,\sigma}=\max\{\|P_1\|_{R,\sigma},\ldots, \|P_N\|_{R,\sigma}\}.
	\end{equation*}

%	The operators $\mathscr{S}$, $\mathscr{N}$, $\Sigma$, $\Pi$, $\J_\sigma$, $\J$ and $\D$.
%%%%%%%%%%%%%%%%%%%%%%%%%%%%%%%%%%%%%%%%%

\subsection{The operators $\mathscr{N}$, $\Sigma$, $\mathscr{S}$, $\Pi$, $\J_\sigma$, $\J$ and $\D$.}\label{tensor_product_notation}

The maps $\mathscr{N}$, $\Sigma$, and $\Pi$ are defined as in \cite{SG11},  but we recall them here for convenience. $\mathscr{N}$ is defined on monomials by
	\begin{align*}
		\mathscr{N}(X_{\ul{i}})=|\ul{i}|X_{\ul{i}},
	\end{align*}
and is linearly extended to a map $\mathscr{N}\colon \mathscr{P}^{(R)}\rightarrow\mathscr{P}^{(R)}$. $\Pi\colon \mathscr{P}^{(R)}\rightarrow \mathscr{P}^{(R)}$ in terms of our present notation is simply $1-\pi_0$: it is the projection onto power series with zero constant term. Lastly, $\Sigma$ is the inverse of $\mathscr{N}$ precomposed with $\Pi$:
	\begin{equation*}
		\Sigma (X_{\ul{i}})= \frac{1}{|\ul{i}|} X_{\ul{i}},
	\end{equation*}
if $|\ul{i}| >0$ and is zero otherwise.\par
Next we consider the following map defined on monomials as:
	\begin{equation*}
		\mathscr{S}(X_{i_1}\cdots X_{i_n})=\frac{1}{n}\sum_{k=0}^{n-1} \rho^k(X_{i_1}\cdots X_{i_n}),
	\end{equation*}
and on constants as simply $\mathscr{S}(c)=c$. For $n\geq 0$ and $P\in \pi_n\left(\mathscr{P}_\varphi\right)$,
	\begin{align*}
		\rho(\mathscr{S}(P))=\frac{1}{n}\sum_{k=0}^{n-1}\rho^{k+1}(P) = \frac{1}{n}\left( \sigma_{-i}(P)+\sum_{k=1}^{n-1} \rho^k(P)\right)= \frac{1}{n}\left( P+\sum_{k=1}^{n-1}\rho^k(P)\right)=\mathscr{S}(P).
	\end{align*}
And of course $\rho(\mathscr{S}(c))=c=\mathscr{S}(c)$. Thus if we denote the set of \textit{$\sigma$-cyclically symmetric} elements by $\mathscr{P}^{(R,\sigma)}_{c.s.}=\{P\in\mathscr{P}^{(R,\sigma)}\colon \rho(P)=P\}$, then 
	\begin{align*}
		\mathscr{S}\left(\mathscr{P}^{(R,\sigma)}_\varphi\right)\subset \mathscr{P}^{(R,\sigma)}_{c.s.}\subset\mathscr{P}^{(R,\sigma)}_{\varphi},
	\end{align*}
with the last inclusion following from the fact that $\rho^n(\pi_n(P))=\sigma_{-i}(\pi_n(P))$ and $P\in\mathscr{P}^{(R,\sigma)}_\varphi$ iff $\pi_n(P)\in \mathscr{P}_\varphi$ for each $n$. Moreover, $\mathscr{S}$ is a contraction on $\mathscr{P}^{(R,\sigma)}_{\varphi}$ with respect to the $\|\cdot\|_{R,\sigma}$. Indeed, since $\|Q\|_{R,\sigma}=\|Q\|_R$ for $Q\in\mathscr{P}^{(R,\sigma)}_{c.s.}$, for $P\in\mathscr{P}^{(R,\sigma)}_{\varphi}$ we have
	\begin{align*}
		\left\| \mathscr{S}(P)\right\|_{R,\sigma}=\|\mathscr{S}(P)\|_{R}\leq \sum_{n=0}^\infty \frac{1}{n}\sum_{k=0}^{n-1}\|\rho^k(\pi_n(P))\|_R \leq \sum_{n=0}^\infty \| \pi_n(P)\|_{R,\sigma} =\|P\|_{R,\sigma}.
	\end{align*}\par
If $f=(f_1,\ldots, f_N)$ with $f_j\in\mathscr{P}$ then we write $\J f,\J_\sigma f \in M_{N}(\mathscr{P}\otimes\mathscr{P}^{op})$ for the matrices given by
	\begin{align*}
		[\J f]_{ij}=\delta_j f_i\qquad\text{ and }\qquad [\J_\sigma f]_{ij}=\partial_j f_i.
	\end{align*}
On elements $Q\in M_N(\mathscr{P}\otimes \mathscr{P}^{op})$ we define the adjoint, transpose, and dagger involution as:
	\begin{align*}
		[Q^*]_{ij}&=[q]_{ji}^*,\\
		[Q^\text{T}]_{ij}&=[q]_{ji}^\diamond,\\
		[Q^\dagger]_{ij}&=[q]_{ij}^\dagger.
	\end{align*}
Thus $Q^*=(Q^\dagger)^\text{T}=(Q^\text{T})^\dagger$. Consequently, we define
	\begin{align*}
		[\bar{\J}_\sigma f]_{ij}=\bar{\partial}_j f_i\qquad\text{ and }\qquad [\tilde{\J}_\sigma f]_{ij} = \tilde{\partial}_i f_j,
	\end{align*}
so that $(\J_\sigma f)^\dagger= \bar{\J}_\sigma (f^*)$ and $(\J_\sigma f)^*=\tilde{\J}_\sigma(f^*)$.\par

Recall $X=(X_1,\ldots, X_N)$ and observe that $[\J_\sigma X]_{ij}=\alpha_{ij}1\otimes 1$.  So after embedding $M_{N}(\C)$ into $M_{N} (\mathscr{P}\otimes\mathscr{P}^{op})$ in the obvious way,  $\J_\sigma X$ and $\frac{2}{1+A}$ can be used interchangeably. Consequently it is clear that $\J_\sigma X$ is self-adjoint (with respect to the adjoint defined above) and invertible with inverse satisfying $[\J_\sigma X^{-1}]_{ij} = \left[\frac{1+A}{2}\right]_{ij} 1\otimes 1$.\par

We also define the multiplication for $Q,Q'\in M_N(\mathscr{P}\otimes\mathscr{P}^{op})$ and left actions on $f=(f_1,\ldots, f_N),\ g=(g_1,\ldots, g_N)\in \mathscr{P}^N$ by
	\begin{align*}
		[Q\# Q']_{i,j} &= \sum_{k=1}^N [Q]_{ik}\#[Q']_{kj} \in \mathscr{P}\otimes\mathscr{P}^{op},\text{ for }i,j\in\{1,\ldots, N\},\\
		Q\# g&= \left( \sum_{j=1}^N [Q]_{ij}\# g_j\right)_{i=1}^N \in \mathscr{P}^N,\text{ and}\\
		f\# g &= \sum_{j=1}^N f_jg_j \in \mathscr{P}.
	\end{align*}
For $Q\in M_N(\mathscr{P}\otimes\mathscr{P}^{op})$ we extend the notation of (\ref{box_notation}) by writing
	\begin{align*}
	\begin{tikzpicture}
	\draw[thick] (0,0) rectangle (2,1);
	\node[left] at (0,.5) {$[Q]_{ij}=$};
	\node at (1,.75) {$k_1\cdots k_n$};
	\node at (1,.25) {$l_m\cdots l_1$};
	\node[right] at (0,.5) {$i$};
	\node[left] at (2,.5) {$j$};
    	\end{tikzpicture}
	\end{align*}
when $[Q]_{ij}=X_{k_1}\cdots X_{k_n}\otimes X_{l_1}\cdots X_{l_m}$, $i,j\in\{1,\ldots, N\}$.

Lastly, we define the $j$th \textit{$\sigma$-cyclic derivative} $\D_j\colon\mathscr{P}\rightarrow\mathscr{P}$ by
	\begin{equation*}
		\D_j(X_{k_1}\cdots X_{k_n})= \sum_{l=1}^n \alpha_{j k_l} \sigma_{-i}(X_{k_{l+1}}\cdots X_{k_n})X_{k_1}\cdots X_{k_{l-1}}.
	\end{equation*}
$\D_j$ can also be written as $m\circ\diamond\circ(1\otimes\sigma_{-i})\circ\bar{\partial}_j$. Let $\D P=(\D_1 P,\ldots, \D_N P)\in\mathscr{P}^N$ be the \textit{$\sigma$-cyclic gradient}. We also define
	\begin{align*}
		\bar{\D}_j(X_{k_1}\cdots X_{k_n})= \sum_{l=1}^n \alpha_{k_l j} X_{k_{l+1}}\cdots X_{k_n}\sigma_i(X_{k_1}\cdots X_{k_{l-1}}),
	\end{align*}
or $\bar{\D}_j=m\circ\diamond\circ(\sigma_i\otimes 1)\circ\partial_j$. Then $(\D_j P)^*=\bar{\D}_j (P^*)$, and from (\ref{differentiating_sigma}) we also have $\D_j\circ\sigma_i=\bar{\D}_j$.

%	The norm $\|\cdot\|_{R\otimes_\pi R}$
%%%%%%%%%%%%%%%%%%%%%

\subsection{The norm $\|\cdot\|_{R\otimes_\pi R}$.}\label{projective_tensor_norm}

Following \cite{SG11}, we denote by $\|\cdot\|_{R\otimes_\pi R}$ the projective tensor product norm on $\mathscr{P}\otimes \mathscr{P}^{op}$; that is,
	\begin{align*}
		\left\| \sum_i a_i\otimes b_i \right\|_{R\otimes_\pi R} = \sup_{\eta} \left\| \eta\left(\sum_i a_i\otimes b_i\right)\right\|,
	\end{align*}
where the supremum is taken over all maps $\eta$ valued in a Banach algebra such that $\eta(a\otimes 1)$ and $\eta(1\otimes b)$ commute and have norms bounded by $\|a\|_R$ and $\|b\|_R$, respectively. In particular, letting $\eta$ be given by left and right multiplication on $\mathscr{P}$ we see that for $D\in \mathscr{P}\otimes\mathscr{P}^{op}$ and $g\in\mathscr{P}$, we have
	\begin{align*}
		\| D\# g\|_R \leq \|D\|_{R\otimes_\pi R} \|g\|_R.
	\end{align*}
We extend the norm to $\left(\mathscr{P}\otimes\mathscr{P}^{op}\right)^N$ by putting for $F=(F_1,\ldots, F_N)\in \left(\mathscr{P}\otimes\mathscr{P}^{op}\right)^N$
	\begin{align*}
		\|F\|_{R\otimes_\pi R} = \max_{1\leq i \leq n} \|F_i\|_{R\otimes_\pi R}.
	\end{align*}
The same symbol is used to denote the norm imposed on $M_N(\mathscr{P}\otimes\mathscr{P}^{op})$ by identifying it with the Banach space of left multiplication operators on $\left(\mathscr{P}\otimes\mathscr{P}^{op}\right)^N$. In \cite{SG11} it is noted that this norm is given by
	\begin{align*}
		\| Q\|_{R\otimes_\pi R} = \max_{1\leq i\leq N} \sum_{j=1}^N \| [Q]_{ij}\|_{R\otimes_\pi R}.
	\end{align*}

%	Cyclic derivatives of $\sigma$-cyclically symmetric polynomials
%%%%%%%%%%%%%%%%%%%%%%%%%%%

\subsection{Cyclic derivatives of $\sigma$-cyclically symmetric polynomials}

Suppose $g\in\pi_n\left(\mathscr{P}^{(R,\sigma)}_{c.s.}\right)$ and write $g=\sum_{|\ul{j}|=n} c(\ul{j}) X_{\ul{j}}$. Then the condition $\rho^l(g)=g$ for $l\in \{0,\ldots, n-1\}$ implies
	\begin{align*}
		 g&=\rho^l(g)=\sum_{\substack{|\ul{j}|=n-l\\ |\ul{k}|=l}} c(\ul{j}\cdot \ul{k}) \sigma_{-i}(X_{\ul{k}})X_{\ul{j}} \\
		 	&= \sum_{\substack{|\ul{j}|=n-l\\ |\ul{k}|=l}} c(\ul{j}\cdot \ul{k})\sum_{|\ul{i}|=l} A(\ul{k},\ul{i}) X_{\ul{i}}X_{\ul{j}} = \sum_{\substack{|\ul{i}|=l\\ |\ul{j}|=n-l}} \left\{ \sum_{|\ul{k}|=l} c(\ul{j}\cdot \ul{k})A(\ul{k},\ul{i})\right\} X_{\ul{i}}X_{\ul{j}}.
	\end{align*}
Hence
	\begin{align}\label{cyclically_symmetric_coefficients_positive}
		c(\ul{i}\cdot\ul{j})=\sum_{|\ul{k}|=l} c(\ul{j}\cdot \ul{k}) A(\ul{k},\ul{i}).
	\end{align}
A similar computation using $l\in \{-n+1,\ldots, -1,0\}$ yields
	\begin{align}\label{cyclically_symmetric_coefficients_negative}
		c(\ul{i}\cdot\ul{j})=\sum_{|\ul{k}|=l} c(\ul{k}\cdot\ul{i}) A^{-1}(\ul{k}\cdot\ul{j}).
	\end{align}
Since $\rho^n(g)=\sigma_{-i}(g)$ for $g\in\pi_n(\mathscr{P})$, we can use Equation (\ref{cyclically_symmetric_coefficients_positive}) to characterize the coefficients of $g\in \pi_n\left(\mathscr{P}_\varphi\right)$:
	\begin{align}\label{centralizer_coefficients}
		c(\ul{i})=\sum_{|\ul{k}|=n} c(\ul{k}) A(\ul{k},\ul{i}).
	\end{align}
With these formulas in hand, the following lemmas are easily obtained.

\begin{lem}\label{cyclic_derivative_bounded}
For $P = \sum_{|\ul{i}|=n}c(\ul{i})X_{\ul{i}} \in \pi_n\left(\mathscr{P}_{c.s.}\right)$ and each $t\in\{1,\ldots, N\}$ we have
	\begin{equation}\label{cyclic_derivative_of_cyclically_symmetric}
		\D_t \Sigma P=\sum_{|\ul{i}|=n} \alpha_{t i_n} c(\ul{i}) X_{i_1}\cdots X_{i_{n-1}}.
	\end{equation}
Moreover, $\D\Sigma$ can be extended to a bounded operator $\D\Sigma\colon \mathscr{P}_{c.s.}^{(R,\sigma)}\rightarrow \left(\mathscr{P}^{(R)}\right)^N$ with $\|\D\Sigma \|\leq \frac{1}{R}$. Additionally, for $1<S<R$, $\D$ can be extended to a bounded operator $\D\colon \mathscr{P}_{c.s.}^{(R,\sigma)}\rightarrow \left(\mathscr{P}^{(S)}\right)^N$ with $\|\D\| \leq C\left(\frac{R}{S}\right)$ depending only on the ratio $\frac{R}{S}$.
\end{lem}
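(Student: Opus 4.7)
The plan is to first derive the explicit formula \eqref{cyclic_derivative_of_cyclically_symmetric} by a direct computation, using $\sigma$-cyclic symmetry of $P$ to force the $n$ rotational contributions to $\D_t P$ to collapse onto a single expression; both boundedness claims then drop out as routine estimates on $\|\cdot\|_R$.

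Expanding $\D_t P=\sum_{l=1}^n T_l$ with
\begin{align*}
T_l := \sum_{|\ul{k}|=n} c(\ul{k})\,\alpha_{tk_l}\,\sigma_{-i}(X_{k_{l+1}}\cdots X_{k_n})\,X_{k_1}\cdots X_{k_{l-1}},
\end{align*}
I would split the index as $\ul{k}=\ul{j}\cdot u\cdot \ul{m}$ with $|\ul{j}|=l-1$, $u=k_l$, $|\ul{m}|=n-l$, and then expand $\sigma_{-i}(X_{\ul{m}})=\sum_{|\ul{m}'|=n-l}A(\ul{m},\ul{m}')X_{\ul{m}'}$. Setting $\ul{i}:=\ul{m}'\cdot\ul{j}$ (of length $n-1$), the coefficient of $X_{\ul{i}}$ in $T_l$ becomes
\begin{align*}
\sum_{u=1}^N \alpha_{tu}\sum_{|\ul{m}|=n-l}c(\ul{j}\cdot u\cdot \ul{m})\,A(\ul{m},\ul{m}').
\end{align*}
The inner sum is exactly the right-hand side of \eqref{cyclically_symmetric_coefficients_positive} with the length-$l$ block played by $\ul{j}\cdot u$ and the length-$(n-l)$ block by $\ul{m}'$; by $\sigma$-cyclic symmetry it equals $c(\ul{m}'\cdot\ul{j}\cdot u)=c(\ul{i}\cdot u)$, which is independent of $l$. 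Hence $T_1=\cdots=T_n$, and dividing $\D_t P = n T_n$ by $n$ yields \eqref{cyclic_derivative_of_cyclically_symmetric}.

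For the bound on $\D\Sigma$, the formula together with $|\alpha_{ti_n}|\leq 1$ immediately gives $\|\D_t\Sigma P\|_R\leq \frac{1}{R}\|P\|_R$ whenever $P\in\pi_n(\mathscr{P}_{c.s.})$. Since each $\pi_n(P)$ is itself $\sigma$-cyclically symmetric, $\|\pi_n(P)\|_R=\|\pi_n(P)\|_{R,\sigma}$, and summing in $n$ gives $\|\D\Sigma\|\leq\frac{1}{R}$ on $\mathscr{P}_{c.s.}^{(R,\sigma)}$. For the bound on $\D$, I would use that $\D_t = n\,\D_t\Sigma$ on degree-$n$ elements together with $\|Q\|_S=(S/R)^{\deg Q}\|Q\|_R$ for homogeneous $Q$, to obtain $\|\D_t\pi_n(P)\|_S\leq \frac{n}{R}(S/R)^{n-1}\|\pi_n(P)\|_{R,\sigma}$; since $1<S<R$, the factor $n(S/R)^{n-1}$ is bounded uniformly in $n$ by a quantity depending only on $S/R$, so summation completes the argument. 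The only real content is the combinatorial identity in the previous paragraph --- the key point being that $\sigma$-cyclic symmetry is what forces all $n$ rotational pieces $T_l$ to agree, which is exactly what allows the $\D\Sigma$-bound to avoid a spurious factor of $n$.
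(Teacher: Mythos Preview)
Your proof is correct and follows the same route as the paper: the identity \eqref{cyclic_derivative_of_cyclically_symmetric} is obtained by using the coefficient relation \eqref{cyclically_symmetric_coefficients_positive} to show that all $n$ rotational contributions $T_l$ coincide (the paper just says this ``follows easily'' from \eqref{cyclically_symmetric_coefficients_positive} without spelling it out), and the two norm bounds are then deduced exactly as you do, via $|\alpha_{ti_n}|\le 1$ and the uniform bound on $n(S/R)^{n-1}$.
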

\begin{proof}
Let $P=\sum_{|\ul{i}|=n} c(\ul{i}) X_{\ul{i}}$. Equation (\ref{cyclic_derivative_of_cyclically_symmetric}) follows easily from Equation (\ref{cyclically_symmetric_coefficients_positive}), which then implies
	\begin{align*}
		\| \D_t\Sigma P\|_R= \left\| \sum_{|\ul{i}|=n} \alpha_{t i_n} c(\ul{i}) X_{i_1}\cdots X_{i_{n-1}}\right\|_R \leq \sum_{|\ul{i}|=n} |c(\ul{i})| R^{n-1} = \frac{1}{R} \|P\|_{R}=\frac{1}{R} \|P\|_{R,\sigma}.
	\end{align*}
So for arbitrary $P\in \mathscr{P}_{c.s.}$ we have
	\begin{align*}
		\| \D\Sigma P\|_R =\max_{t\in\{1,\ldots,N\}} \|\D_t\Sigma P\|_R\leq \sum_{n=0}^{\deg{P}} \max_{t\in\{1,\ldots,N\}} \|\D_t\Sigma \pi_n(P)\|_R \leq \sum_{n=0}^{\deg{P}} \frac{1}{R} \|\pi_n(P)\|_{R,\sigma} = \frac{1}{R} \|P\|_{R,\sigma},
	\end{align*}
and so $\D\Sigma$ extends to $\mathscr{P}_{c.s.}^{(R,\sigma)}$ with the claimed bound on its norm.\par
Considering only $\D$,  (\ref{cyclic_derivative_of_cyclically_symmetric}) implies
	\begin{align*}
		\D_t  P=n\sum_{|\ul{i}|=n} \alpha_{t i_k} c(\ul{i}) X_{i_1}\cdots X_{i_{n-1}}
	\end{align*}
for $P\in\pi_n\left(\mathscr{P}_{c.s.}\right)$. Hence
	\begin{align*}
		\| \D P\|_S \leq n \sum_{|\ul{i}|=n} |c(\ul{i})| S^{n-1} = n\left(\frac{S}{R}\right)^{n-1} \sum_{|\ul{i}|=n} |c(\ul{i})| R^{n-1} = \frac{ n S^{n-1}}{R^n} \|P\|_{R,\sigma}.
	\end{align*}
A routine computation shows for each $n$
	\begin{align*}
		\frac{ n S^{n-1}}{R^n} \leq \frac{ c S^{c-1}}{R^c}\leq c\left(\frac{S}{R}\right)^c=:C\left(\frac{R}{S}\right),
	\end{align*}
where $c=\frac{1}{\ln(R/S)}$. The rest of the argument then proceeds as in the previous case.
\end{proof}

\begin{lem}\label{D_of_S}
For $P\in\mathscr{P}_{\varphi}^{(R,\sigma)}$
	\begin{equation*}
		\D\mathscr{S}\Pi P=\D P.
	\end{equation*}
\end{lem}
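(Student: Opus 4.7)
The plan is to strip off $\Pi$, decompose into homogeneous pieces, and reduce the identity to a statement about the single $\sigma$-cyclic rotation $\rho$, which is then settled by a direct coefficient computation invoking the centralizer condition.

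Since $\D$ kills constants and $\mathscr{S}$ fixes constants, $\mathscr{S}\Pi P = \mathscr{S} P - \pi_0(P)$ gives $\D\mathscr{S}\Pi P = \D\mathscr{S} P$, and similarly $\D P = \D\Pi P$; so it suffices to show $\D\mathscr{S} P = \D P$. By linearity and continuity of $\mathscr{S}$ and $\D$ on $\mathscr{P}_\varphi^{(R,\sigma)}$ (the boundedness of $\D$ in Lemma~\ref{cyclic_derivative_bounded} and the contractivity of $\mathscr{S}$ noted just above it), I reduce to the homogeneous case $P \in \pi_n(\mathscr{P}_\varphi)$ with $n \geq 1$. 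The identity $\rho\circ\sigma_{-i} = \sigma_{-i}\circ\rho$ (immediate on monomials) shows $\rho$ preserves $\mathscr{P}_\varphi$, and since $\rho^n = \sigma_{-i} = \mathrm{id}$ on $\pi_n(\mathscr{P}_\varphi)$, the operator $\mathscr{S} = \frac{1}{n}\sum_{k=0}^{n-1}\rho^k$ there is the averaging projection for a $\mathbb{Z}/n\mathbb{Z}$-action. So it suffices to establish the key claim $\D\rho P = \D P$ for $P \in \pi_n(\mathscr{P}_\varphi)$ and then iterate and average.

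To prove the key claim I would write $P = \sum_{|\ul{i}|=n} c(\ul{i}) X_{\ul{i}}$, expand $\rho P = \sum_{\ul{j}} \tilde c(\ul{j}) X_{\ul{j}}$ with $\tilde c(j_1,\ldots,j_n) = \sum_k c(j_2,\ldots,j_n,k)[A^{-1}]_{k j_1}$, and apply the defining sum for $\D_t$ to both $P$ and $\rho P$. A direct computation, not requiring the centralizer hypothesis, shows that the $l$-th summand of $\D_t\rho P$ coincides with the $(l-1)$-th summand of $\D_t P$ for every $l \in \{2,\ldots,n\}$ (the $\sigma_{-i}(X_m)$ coming from $\rho$ simply reconstitutes the $\sigma_{-i}$ of the tail). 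What remains is to match the two ``boundary'' terms: the $l=1$ summand of $\D_t\rho P$ with the $l=n$ summand of $\D_t P$. Setting $Q_k := \sum_{|\ul{i}'|=n-1} c(\ul{i}',k) X_{\ul{i}'}$, this reduces to
\[
\sum_k \alpha_{kt}\,\sigma_{-i}(Q_k) \;=\; \sum_k \alpha_{tk}\,Q_k.
\]

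The centralizer condition (\ref{centralizer_coefficients}) enters exactly here: applying it with index $(\ul{j}',m)$ and suitably multiplying yields $\sum_{\ul{i}'} c(\ul{i}',k) A^{-1}(\ul{i}',\ul{j}') = \sum_m c(\ul{j}',m)[A]_{mk}$, which transforms the coefficient of $X_{\ul{j}'}$ on the left side of the displayed identity into $\sum_m c(\ul{j}',m)\left(\sum_k [A]_{mk}\alpha_{kt}\right)$. The inner scalar simplifies via $\sum_k [A]_{mk}\alpha_{kt} = \bigl[\tfrac{2A}{1+A}\bigr]_{mt} = 2\delta_{mt} - \alpha_{mt} = \alpha_{tm}$, where the last equality is the real-part identity $\alpha_{tm} + \alpha_{mt} = 2\delta_{tm}$, immediate from the explicit block form (\ref{matrix_form_A_2}) of $A$ (equivalently from $A^T = A^{-1}$ together with the Hermiticity $A^* = A$). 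This yields the right side of the displayed identity. The main obstacle is precisely this ``wrap-around'' step: all the shifts for $l \in \{2,\ldots,n\}$ are formal, but matching the $l=1$ and $l=n$ boundary terms is the only place where one genuinely needs both $\sigma_{-i}(P) = P$ and the precise algebraic structure of $\alpha_{jk}$ forced by the Tomita--Takesaki data.
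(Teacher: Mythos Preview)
Your strategy --- reduce to $\D\rho P=\D P$ on $\pi_n(\mathscr{P}_\varphi)$ and average --- is sound and genuinely different from the paper's route. The paper instead expands $\mathscr{S}\Pi P$ explicitly as $\Sigma Q$ with coefficients $b(\ul{i})=\sum_{l=0}^{n-1}\sum_{|\ul{k}|=l}c\bigl((i_{l+1},\ldots,i_n)\cdot\ul{k}\bigr)A(\ul{k},(i_1,\ldots,i_l))$, invokes the closed formula \eqref{cyclic_derivative_of_cyclically_symmetric} for $\D_t$ on cyclically symmetric elements, and then matches coefficients with $\D_tP$. Your approach is more structural (it isolates the $\rho$-invariance of $\D$ on the centralizer as the mechanism), while the paper's buys a one-line computation once the formula \eqref{cyclic_derivative_of_cyclically_symmetric} is available.

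There is, however, a sign error that breaks your intermediate steps as written. From $\sigma_{-i}(X)=A X$ one gets $\tilde c(\ul{j})=\sum_k c(j_2,\ldots,j_n,k)[A]_{kj_1}$, not $[A^{-1}]_{kj_1}$. With your $[A^{-1}]$ the ``formal'' matching of the $l$-th summand of $\D_t\rho P$ with the $(l-1)$-th summand of $\D_t P$ for $l\ge 2$ actually fails: summing over $j_1$ produces $\sigma_i(X_k)$ rather than the required $\sigma_{-i}(X_k)$. With the correct $[A]$ that step goes through verbatim, and the boundary identity to verify becomes
\[
\sum_k \left[\frac{2A^2}{1+A}\right]_{kt}\sigma_{-i}(Q_k)=\sum_k\alpha_{tk}Q_k
\]
(not the one you displayed). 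After expanding $\sigma_{-i}$ with $A(\ul{i}',\ul{j}')$ and applying the centralizer relation in the form $\sum_{\ul{i}'}c(\ul{i}',k)A(\ul{i}',\ul{j}')=\sum_m c(\ul{j}',m)[A^{-1}]_{mk}$, this reduces precisely to the scalar identity $[\tfrac{2A}{1+A}]_{mt}=\alpha_{tm}$, which you correctly prove via $\alpha_{mt}+\alpha_{tm}=2\delta_{mt}$. So the plan is right; only the $A\leftrightarrow A^{-1}$ bookkeeping needs repair.
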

\begin{proof}
Suppose $P\in \pi_n(\mathscr{P}_\varphi)$. The cases $n=0,1$ are clear so suppose $n\geq 2$. Write $P=\sum_{|\ul{i}|=n} c(\ul{i}) X_{\ul{i}}$, then
	\begin{align*}
		\mathscr{S}\Pi P&=\frac{1}{n}\sum_{l=0}^{n-1} \sum_{\substack{|\ul{j}|=n-l\\ |\ul{k}|=l}}c(\ul{j}\cdot \ul{k})\sigma_{-i}(X_{\ul{k}}) X_{\ul{j}} = \frac{1}{n}\sum_{l=0}^{n-1} \sum_{\substack{|\ul{i}|=l\\ |\ul{j}|=n-l}} \sum_{|\ul{k}|=l} c(\ul{j}\cdot \ul{k}) A(\ul{k},\ul{i}) X_{\ul{i}} X_{\ul{j}}\\
			&=\frac{1}{n}\sum_{|\ul{i}|=n} \left\{ \sum_{l=0}^{n-1}\sum_{|\ul{k}|=l} c\left( (i_{l+1},\ldots, i_{n})\cdot \ul{k}\right) A\left(\ul{k}, (i_1,\ldots, i_l) \right) \right\} X_{\ul{i}} =: \frac{1}{n} \sum_{|\ul{i}|=n} b(\ul{i}) X_{\ul{i}}.
	\end{align*}
So if we let $Q=\sum_{|\ul{i}|=n} b(\ul{i}) X_{\ul{i}}$, then $\mathscr{S}\Pi P=\Sigma Q$ and using Equation (\ref{cyclic_derivative_of_cyclically_symmetric}) we obtain
	\begin{align*}
		\D_t\mathscr{S}\Pi P=\sum_{|\ul{i}|=n} \alpha_{t i_n} b(\ul{i}) X_{i_1}\cdots X_{i_{n-1}},
	\end{align*}
for each $t\in\{1,\ldots, N\}$. It is then a straightforward computation to show that the above equals $\D_t P$. The case for general $P\in\mathscr{P}_\varphi^{(R,\sigma)}$ then follows from linearity.
\end{proof}

%	Notation
%%%%%%%%

\subsection{Notation}

We use the same notation as in \cite{SG11}, adjusted slightly to accommodate our new operators. For $Q\in M_{N}(\mathscr{P}\otimes \mathscr{P}^{op})$ we write
	\begin{align*}
		\text{Tr}(Q)&=\sum_{i=1}^N [Q]_{ii} \in\mathscr{P}\otimes\mathscr{P}^{op},\\
		\text{Tr}_{A}(Q)&=\text{Tr}(A\# Q) = \sum_{i,j=1}^N [A]_{ij} [Q]_{ji} \in \mathscr{P}\otimes\mathscr{P}^{op},\\
		\text{Tr}_{A^{-1}}(Q) &= \text{Tr}(A^{-1}\# Q) = \sum_{i,j=1}^N [A^{-1}]_{ij} [Q]_{ji}\in\mathscr{P}\otimes\mathscr{P}^{op}.
	\end{align*}
By Corollary \ref{difference_quotient_adjoint} $\mathscr{P}\otimes\mathscr{P}^{op}\subset \dom{\partial_j^*}$, so we note
	\begin{align*}
		\J_\sigma^*(Q)=\left( \sum_i \partial_i^*([Q]_{ji})\right)_{j=1}^N \in L^2(\mathscr{P},\varphi)^N.
	\end{align*}
where $\J_\sigma$ is viewed as a densely defined operator from $L^2(\mathscr{P}^N,\varphi)$ to $L^2(M_N(\mathscr{P}\otimes \mathscr{P}^{op}), \varphi\otimes\varphi^{op}\otimes\text{Tr})$ and the above is its adjoint.

%	Transport and invertible power series
%%%%%%%%%%%%%%%%%%%%

\subsection{Transport and invertible power series}

Let $(\M,\psi)$ be a von Neumann algebra with state $\psi$ and let $T_1,\ldots, T_N\in \M$ be self-adjoint elements which generate $\M$. Then, after \cite{SG11}, $\M$ can be thought of as a completion of the algebra $\C\<T_1,\ldots, T_N\>$, and $\psi$ induces a linear functional $\psi_T$ on $\C\<t_1,\ldots, t_N\>$, the non-commutative polynomials in abstract indeterminates $t_1,\ldots, t_N$, via $\psi_T(t_{k_1}\cdots t_{k_n})=\psi(T_{k_1}\cdots T_{k_n})$, $k_1,\ldots, k_n\in\{1,\ldots, N\}$. $\psi_T$ is called the \textit{non-commutative law of} $T_1,\ldots, T_N$ and we write $W^*(\psi_T)\cong \M$. Let $S_1,\ldots, S_N\in \mc{N}$ be self-adjoint elements generating another von Neumann algebra $\mc{N}$ with state $\chi$ and let $\chi_S$ be their law so that $W^*(\chi_S)\cong \mc{N}$. 

\begin{defi}\label{transport}
By \textit{transport} from $\psi_T$ to $\chi_S$ we mean an $N$-tuple of self-adjoint elements $Y_1,\ldots, Y_N\in \M$ having the same law as $S_1,\ldots,S_N$:
	\begin{align*}
		\chi(P(S_1,\ldots, S_N)) = \psi(P(Y_1,\ldots, Y_N)),
	\end{align*}
for all non-commutative polynomials $P$ in $N$ variables. If such an $N$-tuple exists then there is a state-preserving embedding $\mc{N}\cong W^*(Y)\subset \M$.
\end{defi}

Let $M=W^*(X_1,\ldots, X_N)$ be as before. Suppose $L$ is a von Neumann algebra generated by self-adjoint $Z_1,\ldots, Z_N$ with state $\psi$ and there exists transport $Y=(Y_1,\ldots, Y_N)$  from $\varphi_X$ to $\psi_Z$ such that $Y=G(X)\in (\mathscr{P}^{(R)})^N$. That is, $Y_j=G_j(X)$ is a power series in terms of $X_1,\ldots, X_N$. If we can invert this power series so that $X=H(Y)$, then $H(Z)\in L^N$ is transport from $\psi_Z$ to $\varphi_X$. It would then follow that we have a state-preserving isomorphism $L\cong M$. The following lemma, which is presented as Corollary 2.4 in \cite{SG11}, shows that such inverses can be found.

\begin{lem}\label{invertible_power_series}
Let $R<S$ and consider the equation $Y=X+f(X)$ with $f\in (\mathscr{P}^{(S)})^N$ and $\|Y\|_{R}<S$. Then there exists a constant $C>0$, depending only on $S$ and $R$ so that whenever $\|f\|_S<C$, then there exists $H\in (\mathscr{P}^{(R)})^N$ so that $X=H(Y)$.
\end{lem}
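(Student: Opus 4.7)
The plan is to construct $H$ as the unique fixed point of a contractive self-map on a small ball in $(\mathscr{P}^{(R)})^N$. Write $H = \mathrm{id} - h$ for an unknown $h \in (\mathscr{P}^{(R)})^N$ and substitute $X = H(Y) = Y - h(Y)$ into the equation $Y = X + f(X)$. This yields the fixed-point equation
\[
h(Y) = f(Y - h(Y)),
\]
which suggests defining the operator $T(h)(Y) := f(Y - h(Y))$ and seeking $h = T(h)$ in the closed ball $B_\epsilon := \{h \in (\mathscr{P}^{(R)})^N : \|h\|_R \leq \epsilon\}$ for some $\epsilon$ with $R+\epsilon < S$ to be chosen.

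The first step is to verify that $T$ sends $B_\epsilon$ into itself. If $\|h\|_R \leq \epsilon$ then each component of $Y - h(Y)$ has $\|\cdot\|_R$-norm at most $R+\epsilon < S$, so the power-series composition is well-defined and satisfies the standard evaluation bound $\|T(h)\|_R = \|f(Y - h(Y))\|_R \leq \|f\|_{R+\epsilon} \leq \|f\|_S$. Hence $\|f\|_S \leq \epsilon$ already forces $T(B_\epsilon) \subset B_\epsilon$.

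The second and more delicate step is the contraction estimate. Expanding $f = \sum_n \sum_{|\ul{i}|=n} c(\ul{i}) X_{\ul{i}}$ and telescoping the difference
\[
(Y - h_1(Y))_{\ul{i}} - (Y - h_2(Y))_{\ul{i}} = \sum_{k=1}^n (Y-h_1)_{i_1}\cdots \bigl((h_2 - h_1)_{i_k}\bigr)\cdots (Y-h_2)_{i_n}
\]
produces the bound $n(R+\epsilon)^{n-1}\|h_1 - h_2\|_R$ on each $n$-degree monomial. Summing against $|c(\ul{i})|$ and using the Cauchy-type comparison $\|\pi_n f\|_{R+\epsilon} = ((R+\epsilon)/S)^n \|\pi_n f\|_S$ gives
\[
\|T(h_1) - T(h_2)\|_R \leq \frac{1}{R+\epsilon}\sum_{n\geq 0} n\left(\frac{R+\epsilon}{S}\right)^n \|\pi_n f\|_S \cdot \|h_1 - h_2\|_R \leq \frac{K}{R+\epsilon}\,\|f\|_S\,\|h_1 - h_2\|_R,
\]
where $K = \sup_n n\bigl((R+\epsilon)/S\bigr)^n < \infty$ since $R+\epsilon < S$. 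Choosing $\epsilon := (S-R)/2$ fixes $K$ purely in terms of $S/R$, and then taking any $C$ with $C \leq \epsilon$ and $KC/(R+\epsilon) < 1$ ensures both self-mapping and strict contractivity whenever $\|f\|_S < C$. Banach's fixed point theorem produces a unique $h\in B_\epsilon$ with $T(h) = h$, and $H := \mathrm{id} - h$ is the desired inverse power series.

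The main obstacle is controlling the factor of $n$ that inevitably appears when differentiating an $n$-th degree monomial under composition; this is precisely the reason the hypothesis $R < S$ is used, since the Cauchy-type comparison $\bigl((R+\epsilon)/S\bigr)^n$ must supply summable (or at least uniformly bounded) decay to beat that linear growth. Once the constants $\epsilon, K, C$ are pinned down in terms of $R$ and $S$ alone, the rest is the routine Banach fixed-point argument.
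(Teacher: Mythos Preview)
Your argument is correct and essentially identical to the paper's: both solve for the inverse by Picard iteration on $H \mapsto \mathrm{id} - f\circ H$ (equivalently, your $h \mapsto f\circ(\mathrm{id}-h)$), with the same Lipschitz constant $\|f\|_S\sup_n n(S')^{n-1}S^{-n}$ where $S'=R+\epsilon$ plays the role of the paper's intermediate radius. The only difference is cosmetic---the paper carries out the iteration by hand and closes with a one-line check that the limit equals $X$ (since $X$ itself satisfies $Z=Y-f(Z)$ and the contraction forces uniqueness), a step you leave implicit but which follows immediately from your setup.
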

\begin{proof}
Fix $S'\in (\|Y\|_R, S)$ and define
	\begin{align*}
		C(S')=\|f\|_S \max_{k\geq 0} k(S')^{k-1}S^{-k}.
	\end{align*}
Since $\|f\|_S<C$, we can choose $C$ sufficiently small so that $C(S')<1$ and
	\begin{align*}
		\|Y\|_{R}+\frac{C}{1-C(S')}\leq S'.
	\end{align*}
We define a sequence of $N$-tuples
	\begin{align*}
		H_k(Y)= Y - f(H_{k-1}(Y)) \in (\mathscr{P}^{(R)})^N,
	\end{align*}
with $H_0(Y)=Y$. Denote the component functions of $H_k(Y)$ by $[H_k(Y)]_j$, $j\in\{1,\ldots, N\}$. We claim that $\|H_k(Y)\|_R\leq S'$ for all $k\geq 0$. This clearly holds for $H_0(Y)$, so assume it holds for $1,\ldots, k-1$. Fix $j\in\{1,\ldots, N\}$ and suppose
	\begin{align*}
		f_j(X_1,\ldots, X_N)=\sum_{|\ul{i}|\geq 0} c(\ul{i}) X_{\ul{i}}.
	\end{align*}
Then for any $0\leq l\leq k-1$ we have
	\begin{align*}
		\|[H_{l+1}(Y)]_j - [H_l(Y)]_j\|_R &= \| f_j(H_l(Y)) - f_j(H_{l-1}(Y))\|_R \\
			&\leq \sum_{n=0}^\infty \sum_{|\ul{i}|=n} |c(\ul{i})| \sum_{u=1}^n \| H_l(Y)\|_R^{u-1} \| [H_l(Y)]_{i_u} - [H_{l-1}(Y)]_{i_u}\|_R \| H_{l-1}(Y)\|_R^{n-u}\\
			&\leq\|H_l(Y) - H_{l-1}(Y)\|_R\sum_{n=0}^\infty n(S')^{n-1}S^{-n}\sum_{|\ul{i}|=n} |c(\ul{i})| S^n\\
			&\leq \|H_l(Y)- H_{l-1}(Y)\|_R C(S').
	\end{align*}
As $j$ was arbitrary, we obtain through iteration
	\begin{align*}
		\|H_{l+1}(Y) - H_l(Y)\|_R \leq \|H_1(Y) - H_0(Y)\|_R C(S')^l = \| f(Y)\|_R C(S')^l \leq C C(S')^l,
	\end{align*}
and thus
	\begin{align*}
		\| H_k(Y)\|_R &\leq \|H_0(Y)\|_R + \|H_k(Y)- H_0(Y)\|_R \leq \|Y\|_R + \sum_{l=0}^{k-1} \|H_{l+1}(Y) - H_l(Y)\|_R \\
				&\leq \|Y\|_R + \sum_{l=0}^{k-1} C C(S')^l \leq \|Y\|_R + \frac{C}{1 - C(S')} \leq S',
	\end{align*}
by our assumption on $C$. So the claim holds and by induction we have the bound $\|H_k(Y)\|_R\leq S'$ for all $k\geq 0$. Moreover, by a standard argument we can see that $\{H_k(Y)\}_{k\geq 0}$ is Cauchy and so converges to some $H(Y)\in (\mathscr{P}^{(R)})^N$ which satisfies $H(Y)=Y-f(H(Y))$ and $\|H(Y)\|_R\leq S'$. Since $\|X\|_R = R\leq S'$ and $Y=X+f(X)$ we obtain (via the same argument as above)
	\begin{align*}
		\| X - H(Y)\|_R =\| Y - f(X) - Y + f(H(Y)) \|_R \leq \| X - H(Y)\|_R C(S').
	\end{align*}
As $C(S')<1$, this implies $H(Y)=X$.
\end{proof}

%	Monotonicity of transport
%%%%%%%%%%%%%%%

\subsection{Monotonicity of transport.}

We introduce a definition for what it means for transport to be ``monotone.\rq{}\rq{} Note that in the tracial case ($A=1$) this coincides with Definition 2.1 in \cite{SG11}.
\begin{defi}
We say that transport from $\varphi_X$ to $\psi_Z$ via the $N$-tuple $Y=(Y_1,\ldots, Y_N)$ is monotone if $Y=\D G$ for some $G\in \mathscr{P}^{(R)}$, $R\geq4\sqrt{\|A\|}$, such that $(\sigma_\frac{i}{2}\otimes 1)(\J_\sigma \D G)\geq 0$ as an operator on $L^2(\mathscr{P}\otimes\mathscr{P}^{op},\varphi\otimes\varphi^{op})^N$.
\end{defi}

Suppose $(\M,\psi)$ is a von Neumann algebra with a faithful normal state $\psi$. Let $\H_\psi=L^2(\M,\psi,\xi_0)$ be the Hilbert space obtained via the GNS construction with a cyclic vector implementing $\psi$. Let $S_\psi$ be the Tomita conjugation for the left Hilbert algebra $\M\xi_0$, and let $\Delta_\psi$ and $J_\psi$ be the modular operator and conjugation (respectively). Recall (\emph{cf.} \cite{T2}, Chapter IX, \textsection 1) that there is a canonical pointed convex cone
	\begin{align*}
		\mathfrak{P}_\psi=\overline{\{ \Delta_\psi^{1/4}x\xi_0\colon x\in\M_+\}}^{\|\cdot\|_{\psi}},
	\end{align*}
which is self-dual in the sense that if $\eta\in\H_\psi$ satisfies $\<\eta,\xi\>_\psi\geq 0$ for all $\xi\in \mf{P}_\psi$ then $\eta\in \mf{P}_\psi$. The embedding
	\begin{align*}
		x\mapsto \Delta_\psi^\frac{1}{4} x \xi_0
	\end{align*}
of $\M$ into $\H_\psi$ then has the benefit of sending positive elements in $\M$ into $\mf{P}_\psi$.\par
In particular, if $\M=M_N(M\bar{\otimes} M^{op})$ and $\psi=\varphi\otimes\varphi^{op}\otimes \text{Tr}_A$ then
	\begin{align*}
		\Delta_\psi^\frac{1}{4} q \xi_0= (\sigma_{-\frac{i}{4}}\otimes \sigma_\frac{i}{4})(A^\frac{1}{4}\# q\# A^{-\frac{1}{4}})\xi_0.
	\end{align*}
We shall see in Lemma \ref{change_of_variables}.(iv) that if $G\in \mathscr{P}_\varphi^{(R,\sigma)}$ then $A^s\#\J_\sigma \D G \# A^{-s}=(\sigma_{-is}\otimes\sigma_{-is})(\J_\sigma \D G)$. Hence if $Y=\D G$ for such $G$, then $(\sigma_\frac{i}{2}\otimes 1)(\J_\sigma Y)$ embeds into $\H_\psi$ as
	\begin{align*}
		(\sigma_{-\frac{i}{4}}\otimes \sigma_\frac{i}{4})(A^\frac{1}{4}\# (\sigma_\frac{i}{2}\otimes 1)(\J_\sigma Y)\# A^{-\frac{1}{4}})\xi_0 = \J_\sigma Y \xi_0.
	\end{align*}

%	The Schwinger-Dyson equation and free Gibbs state
%%%%%%%%%%%%%%%%%%%%%%%%%

\subsection{The Schwinger-Dyson equation and free Gibbs state.}\label{S-D_section}
Our construction of the transport $Y$ will exploit the condition that $\varphi_Y$ satisfies the so-called Schwinger-Dyson equation:

\begin{defi}
Given $V\in\mathscr{P}^{(R,\sigma)}_{c.s.}$, we say a linear functional $\varphi_V$ on $\mathscr{P}$ satisfies \textit{the Schwinger-Dyson equation with potential $V$} if
	\begin{align}\label{Schwinger-Dyson_1}
		\varphi_V( \D(V) \# P) = \varphi_V\otimes\varphi_V^{op}(\text{Tr}(\J_\sigma P)),\qquad\forall P\in\mathscr{P}.
	\end{align}
The law $\varphi_V$ is called the \textit{free Gibbs state with potential $V$}.
\end{defi}

Note that when $\J_\sigma$ is viewed as a densely defined operator from $L^2(\mathscr{P}^N,\varphi)$ to $L^2(M_N(\mathscr{P}\otimes\mathscr{P}^{op}),\varphi\otimes\varphi^{op}\otimes\text{Tr})$, (\ref{Schwinger-Dyson_1}) is equivalent to
	\begin{align}\label{Schwinger-Dyson_2}
		\J_\sigma^*(1)=\D V,
	\end{align}
where $1\in M_N(\mathscr{P}\otimes\mathscr{P}^{op})$ is the identity matrix.\par
Consider the potential
	\begin{align}\label{quadratic_potential}
		V_0=\frac{1}{2}\sum_{j,k=1}^N \left[\frac{1+A}{2}\right]_{jk} X_k X_j.
	\end{align}
Then
	\begin{align*}
		\rho(V_0)=\frac{1}{2}\sum_{j,k=1}^N  \left[\frac{1+A}{2}\right]_{jk} \sigma_{-i}(X_j)X_k=\frac{1}{2}\sum_{j,k,l=1}^N  \left[\frac{1+A^{-1}}{2}\right]_{kj}[A]_{jl} X_l X_k = \frac{1}{2} \sum_{k,l=1}^N  \left[\frac{1+A}{2}\right]_{kl} X_l X_k= V_0,
	\end{align*}
and hence $V_0\in\mathscr{P}_{c.s.}^{(R,\sigma)}$. Also,
	\begin{align*}
		\D_l(V_0)&=\frac{1}{2}\sum_{i,j}  \left[\frac{1+A}{2}\right]_{ij} \left(\alpha_{lj} \sigma_{-i}(X_i) + \alpha_{li} X_j\right)\\
			&= \frac{1}{2} \sum_{i,j,k=1}^N  \left[\frac{2}{1+A}\right]_{lj}\left[\frac{1+A^{-1}}{2}\right]_{ji}[A]_{ik} X_k + \frac{1}{2}\sum_{i,j=1}^N  \left[\frac{2}{1+A}\right]_{li} \left[\frac{1+A}{2}\right]_{ij} X_j \\
			&=\frac{1}{2}X_l+\frac{1}{2}X_l=X_l,
	\end{align*}
so that $\D V_0=X$. Using $A=A^*$ it is also easy to see that $V_0^*=V_0$.\par
Now, (\ref{Schwinger-Dyson_2}) for $V=V_0$ states $\J_\sigma^*(1)=X$, or $\partial_j^*(1\otimes 1)=X_j$ for each $j=1,\ldots, N$, where the the adjoint is with respect to $\varphi_{V_0}$. However, from (\ref{adjoint_of_1}) we know this relation holds when the adjoint of $\partial_j=\partial_j^{(0)}$ is taken with respect to the free quasi-free state $\varphi_0$. We therefore immediately obtain the following result.

\begin{thm}\label{free_Gibbs_is_free_quasi-free}
The free Gibbs state with potential $V_0$ is the free quasi-free state $\varphi_0$ on $M_0=\Gamma(\H_\R,U_t)''$.
\end{thm}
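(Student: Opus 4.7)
The plan is to verify that the free quasi-free state $\varphi_0$ on $M_0$ satisfies the Schwinger-Dyson equation \eqref{Schwinger-Dyson_1} with $V=V_0$ by reducing it to the adjoint identity already established in Proposition \ref{adjoint_of_q-derivations}. The author's immediately preceding paragraph essentially computed that $\D V_0 = X$, so the equation to verify collapses from
$\varphi_V(\D(V)\#P) = \varphi_V\otimes\varphi_V^{op}(\mathrm{Tr}(\J_\sigma P))$
to
$\varphi_0(X\# P) = \varphi_0\otimes\varphi_0^{op}(\mathrm{Tr}(\J_\sigma P))$
for all $P\in\mathscr{P}^N$, and componentwise this is just
$\varphi_0(X_j P_j) = \varphi_0\otimes\varphi_0^{op}(\partial_j P_j)$
for each $j\in\{1,\ldots,N\}$ and $P_j\in\mathscr{P}$.

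First I would observe that specializing $q=0$ in the constructions of Subsection \ref{derivations_on_M_q} gives $\Xi_0 = P_0 \leftrightarrow 1\otimes 1$ under the identification $L^2(M_0\bar\otimes M_0^{op})\cong HS(\mathcal{F}_0(\H))$. Consequently the derivation $\partial_j^{(0)}$ coincides with the $\sigma$-difference quotient $\partial_j$ used in the definition of $\J_\sigma$, since both are the unique derivations sending $X_i \mapsto \alpha_{ij}\,1\otimes 1$. This identification, which is observed in the text, is the one tying together the two derivations that appear in the Schwinger-Dyson equation and in Proposition \ref{adjoint_of_q-derivations}.

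Next I would apply Proposition \ref{adjoint_of_q-derivations} at $q=0$, which asserts $\partial_j^{(0)*}(1\otimes 1) = X_j$, with the adjoint computed against $\varphi_0$ on the domain side and $\varphi_0\otimes\varphi_0^{op}$ on the target side. Unwinding the definition of the adjoint gives precisely
$\langle X_j, P_j\rangle_{\varphi_0} = \langle 1\otimes 1, \partial_j P_j\rangle_{\varphi_0\otimes\varphi_0^{op}}$
for every $P_j\in\mathscr{P}$, and since $X_j$ is self-adjoint and $(1\otimes 1)^*=1\otimes 1$, this is exactly the required scalar identity $\varphi_0(X_j P_j) = \varphi_0\otimes\varphi_0^{op}(\partial_j P_j)$. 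Summing over $j$ recovers \eqref{Schwinger-Dyson_1} for $V=V_0$, so $\varphi_0$ is the free Gibbs state with potential $V_0$.

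There is no substantive obstacle in this argument — all the work has been done in Proposition \ref{adjoint_of_q-derivations} and in the computation $\D V_0 = X$. The only subtlety worth flagging is the matching of derivations at $q=0$: one must not conflate $\partial_j^{(q)}$ with $\partial_j$ away from $q=0$, but at $q=0$ they coincide because $\Xi_0=1\otimes 1$, and it is precisely this coincidence that lets the general conjugate-variable formula of Proposition \ref{adjoint_of_q-derivations} be read as the Schwinger-Dyson identity for the quadratic potential $V_0$.
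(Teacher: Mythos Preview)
Your proposal is correct and follows essentially the same route as the paper: the paper observes that with $\D V_0=X$ the Schwinger--Dyson equation in the form \eqref{Schwinger-Dyson_2} reduces to $\partial_j^*(1\otimes 1)=X_j$, which at $q=0$ (where $\partial_j=\partial_j^{(0)}$ since $\Xi_0=1\otimes 1$) is exactly equation \eqref{adjoint_of_1} from Proposition~\ref{adjoint_of_q-derivations}. Your argument is the same, just unwound via the equivalent form \eqref{Schwinger-Dyson_1} rather than \eqref{Schwinger-Dyson_2}.
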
\par

It is clear that the $\varphi_{V_0}$ is unique since (\ref{Schwinger-Dyson_1}) for $V=V_0$ recursively defines $\varphi_{V_0}$ for all monomials. However, even for small perturbations (in the $\|\cdot\|_{R,\sigma}$-norm) $V=V_0+W$  of $V_0$ the free Gibbs state with potential $V$ is unique, which we demonstrate below. Consequently, if $\psi_Z$ satisfies the Schwinger-Dyson equation for a $V$, then to find transport from $\varphi_X$ it suffices to produce $Y\in M^N$ whose law $\varphi_Y$ (determined by $\varphi$) satisfies the Schwinger-Dyson equation with the same potential $V$. The proof of uniqueness presented here differs from the proof of Theorem 2.1 in \cite{GMS06} only in the differential operators considered.

\begin{thm}\label{Gibbs_state_unique}
Fix $R\geq 4\sqrt{\|A\|}$. Let $V=V_0+W\in \mathscr{P}_{c.s.}^{(R,\sigma)}$. Then for sufficiently small $\|W\|_{R,\sigma}$, the Schwinger-Dyson equation has a unique solution amongst states that satisfy
	\begin{align}\label{phi_monomial0}
		|\varphi(X_{\ul{j}})| \leq 3^{|\ul{j}|}
	\end{align}
for any multi-index $\ul{j}$.
\end{thm}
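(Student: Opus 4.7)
The plan is to view the Schwinger--Dyson equation as a fixed-point equation and apply a contraction-mapping argument to two hypothetical solutions. Every non-constant monomial in $\mathscr{P}$ has the form $X_iQ$ for a unique $i\in\{1,\ldots,N\}$ and $Q\in\mathscr{P}$, so for each linear functional $\varphi$ on $\mathscr{P}$ with $\varphi(1)=1$ one defines $T(\varphi)(1)=1$ and
\begin{align*}
T(\varphi)(X_iQ):=\varphi\otimes\varphi^{op}(\partial_iQ)-\varphi(\D_iW\cdot Q),\qquad i\in\{1,\ldots,N\},\ Q\in\mathscr{P},
\end{align*}
extended linearly to all of $\mathscr{P}$. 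Since $\D V_0=X$, we have $\D V=X+\D W$, and it follows immediately from the definition that $\varphi$ satisfies the Schwinger--Dyson equation with potential $V$ if and only if $T(\varphi)=\varphi$.

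Now suppose $\varphi_1$ and $\varphi_2$ are two solutions satisfying the hypothesized growth bound, and set $\mu:=\varphi_1-\varphi_2$. Subtracting the two Schwinger--Dyson equations gives
\begin{align*}
\mu(X_iQ)=\mu\otimes\varphi_1^{op}(\partial_iQ)+\varphi_2\otimes\mu^{op}(\partial_iQ)-\mu(\D_iW\cdot Q),
\end{align*}
a linear equation in $\mu$. We equip the space of linear functionals on $\mathscr{P}$ with the weighted norm
\begin{align*}
\|\nu\|_{R_0}:=\sum_{n\geq 0}\Big(\max_{|\ul{j}|=n}|\nu(X_{\ul{j}})|\Big)R_0^{-n},
\end{align*}
where $R_0$ is chosen with $3<R_0<R$; the growth bound $|\varphi_i(X_{\ul{j}})|\leq 3^{|\ul{j}|}$ then ensures $\|\varphi_i\|_{R_0}<\infty$.

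We bound the three pieces appearing on the right-hand side of the equation for $\mu$ in this norm. The ``free-derivative'' contributions $\mu\otimes\varphi_1^{op}(\partial_iQ)+\varphi_2\otimes\mu^{op}(\partial_iQ)$ are controlled by $|\alpha_{jk}|\leq 1$ and property $(4)$ of the matrix $A$, which (after rearranging the double sum as a convolution) yields an estimate proportional to $R_0^{-1}\|\mu\|_{R_0}(\|\varphi_1\|_{R_0}+\|\varphi_2\|_{R_0})$. The perturbation piece $\mu(\D_iW\cdot Q)$ is handled by writing $\D_iW=\sum_{\ul{m}}c_{i,\ul{m}}X_{\ul{m}}$ and using the estimate $|c_{i,\ul{m}}|\leq S^{-|\ul{m}|}\|\D_iW\|_S$ for some $S\in(R_0,R]$, followed by Lemma \ref{cyclic_derivative_bounded} to bound $\|\D_iW\|_S\leq C\|W\|_{R,\sigma}$. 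Combining produces an inequality of the shape
\begin{align*}
\|\mu\|_{R_0}\leq\big[C_1+C_2\|W\|_{R,\sigma}\big]\|\mu\|_{R_0},
\end{align*}
where $C_1<1$ whenever $R_0$ is chosen sufficiently large and $C_2$ is a constant depending on $R_0$ and $R$. For $\|W\|_{R,\sigma}$ sufficiently small the bracketed coefficient is $<1$, which forces $\mu=0$.

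The main obstacle lies in balancing the choice of norm: it must be strong enough to dominate the $\deg(Q)$ summands produced by $\partial_iQ$, yet weak enough that the perturbation $\mu(\D_iW\cdot Q)$, which couples $\mu$ to values at arbitrarily high degree, remains controllable. The $\sigma$-cyclic norm $\|W\|_{R,\sigma}$ and Lemma \ref{cyclic_derivative_bounded} were introduced precisely to enable this control. Overall the strategy parallels Theorem 2.1 in \cite{GMS06}, with the tracial free-difference quotients and cyclic derivatives replaced by their $\sigma$-analogues $\partial_i$ and $\D_i$, and the trace replaced by the state $\varphi$.
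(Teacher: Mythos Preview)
Your approach is essentially identical to the paper's. The paper also subtracts two solutions, writes the same recursion for $\mu(X_iP)$, introduces the weighted quantity $d_\gamma(\varphi,\varphi')=\sum_{l\geq 1}\gamma^l\max_{|\ul{j}|=l}|\mu(X_{\ul{j}})|$ (which is your $\|\mu\|_{R_0}$ with $R_0=\gamma^{-1}$), and obtains a contraction inequality $d_\gamma\leq c\,d_\gamma$ with $c<1$; the only difference is that the paper carries out the explicit constants (taking $\gamma=\tfrac{28}{25R}$ and arriving at $c=\tfrac{49}{50}+\tfrac{7}{25}\|\D W\|_{25R/28}$) rather than leaving them as $C_1+C_2\|W\|_{R,\sigma}$. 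One small point: your phrase ``$C_1<1$ whenever $R_0$ is chosen sufficiently large'' is slightly misleading since $R_0$ is constrained to $(3,R)$, but the paper's computation confirms that a suitable $R_0$ exists in this window once $R\geq 4$.
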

\begin{proof}
Suppose two states $\varphi$ and $\varphi'$ both solve the Schwinger-Dyson equation with potential $V$. Then $\varphi(1)=\varphi'(1)=1$ and hence they agree on $\pi_0(\mathscr{P})$. Fix $l\geq 1$ and a monomial $P\in \pi_{l-1}(\mathscr{P})$. Then we have
	\begin{align*}
		(\varphi-\varphi')(X_i P)= ((\varphi-\varphi')\otimes\varphi)(\partial_i P) + (\varphi'\otimes (\varphi-\varphi'))(\partial_i P) - (\varphi-\varphi')(\D_i W P).
	\end{align*}
(Note that for $l=1$ the first two terms disappear). Define
	\begin{equation*}
		\Delta_l(\varphi,\varphi'):=\max_{|\ul{j}|=l} |(\varphi-\varphi')(X_{\ul{j}})|.
	\end{equation*}
In particular $\Delta_0(\varphi,\varphi')=0$. Write $\D W=\sum_{\ul{i}} c(\ul{j}) X_{\ul{j}}$. Then we have
	\begin{align*}
		\Delta_l(\varphi,\varphi') \leq 2 \sum_{k=0}^{l-2} \Delta_k(\varphi,\varphi') 3^{l-2-k} + \sum_{p=0}^\infty \sum_{|\ul{j}|=p} |c(\ul{j})| \Delta_{p+l-1}(\varphi,\varphi').
	\end{align*}
For $\gamma>0$, set
	\begin{equation*}
		d_\gamma(\varphi,\varphi') = \sum_{l=1}^\infty \gamma^l \Delta_l(\varphi,\varphi').
	\end{equation*}
Since (\ref{phi_monomial0}) implies $\Delta_l(\varphi,\varphi')\leq 2 (3)^l$, we see that $d_\gamma(\varphi,\varphi')<\infty$ so long as $\gamma< \frac{1}{3}$. In the above equality we multiply both sides of the equation by $\gamma^l$ and then sum over $l\geq 1$ to obtain
	\begin{align*}
		d_\gamma(\varphi,\varphi') &\leq 2\sum_{l=2}^\infty \gamma^l\sum_{k=0}^{l-2} \Delta_k(\varphi,\varphi') 3^{l-2-k} + \sum_{l=1}^\infty \gamma^l \sum_{p=0}^\infty \sum_{|\ul{j}|=p} |c(\ul{j})| \Delta_{p+l-1}(\varphi,\varphi')\\
				&=2\gamma^2\sum_{k=0}^\infty \gamma^k\Delta_k(\varphi,\varphi')\sum_{l=k+2}^\infty \gamma^{l-2-k}3^{l-2-k} + \sum_{p=0}^\infty \sum_{|\ul{j}|=p} |c(\ul{j})| \gamma^{-p+1}\sum_{l=1}^\infty \gamma^{p+l-1}\Delta_{p+l-1}(\varphi,\varphi')\\
				&\leq \frac{2\gamma^2}{1-3\gamma} d_\gamma(\varphi,\varphi') + \gamma\sum_{p=0}^\infty \sum_{|\ul{j}|=p} |c(\ul{j})| \gamma^{-p}d_\gamma(\varphi,\varphi').
	\end{align*}
Let $\gamma=\frac{28}{25 R}$. Then $\gamma^{-1}<R$ and $R>4$ implies $3\gamma<\frac{21}{25}$. Hence
	\begin{align*}
		d_\gamma(\varphi,\varphi')&\leq d_\gamma(\varphi,\varphi')\left(\frac{49}{50} + \frac{7}{25}\|\D W\|_{\frac{25R}{28}}\right).
	\end{align*}
Recall from Lemma \ref{cyclic_derivative_bounded}, that $\|\D W\|_{\frac{25R}{28}} \leq C\|W\|_{R,\sigma}$ where the constant only depends on the ratio $\frac{R}{25R/28}=\frac{28}{25}$. Thus if $\|W\|_{R,\sigma} <\frac{1}{14C}$ then
	\begin{align*}
		d_\gamma(\varphi,\varphi') \leq c d_\gamma(\varphi,\varphi')\qquad \text{with }c<1,
	\end{align*}
implying $d_\gamma(\varphi,\varphi')=0$ and hence $\Delta_l(\varphi,\varphi')=0$ for all $l\geq 1$.
\end{proof}

This theorem implies that if the law $\psi_Z$ of $Z=(Z_1,\ldots, Z_N)\subset (L,\psi)$ and the law $\varphi_Y$ of $Y=(Y_1,\ldots, Y_N)\subset (M,\varphi)$ both solve the Schwinger-Dyson equation with potential $V$, then $W^*(Z_1,\ldots, Z_N)\cong W^*(Y_1,\ldots, Y_N)\cong W^*(\varphi_V)$. In particular, $W^*(\varphi_V)$ is well-defined.

%	Outline of the Paper
%%%%%%%%%%%%%

\subsection{Outline of the paper.}

The general outline for the paper is as follows: we begin in Section \ref{construction} by fixing $q=0$ and a potential $V=V_0+W\in \mathscr{P}^{(R,\sigma)}_{c.s.}$ and assuming there exists $Y=(Y_1,\ldots, Y_N)\in (\mathscr{P}^{(R)})^N$ whose law (induced by $\varphi$) satisfies the Schwinger-Dyson equation with potential $V$. Several equivalent versions of this equation will be derived in Sections \ref{equivalent_S-D} and \ref{identities} until we arrive at a final version for which a fixed point argument can be applied. Several technical estimates will be produced in Section \ref{technical_estimates} for the purposes of this fixed point argument so that in Section \ref{existence_of_g}, given certain assumptions regarding $V$, we can assert the existence of $Y$. Having obtained the desired transport, we then use Lemma \ref{invertible_power_series} to refine the transport into an isomorphism in Section \ref{summary}. Finally, in Section \ref{application} we present the main application to q-deformed Araki-Woods algebras.

%%%%%%%%%%%%%%%%%%%%%%%%%%%%%%
%	Construction of the Non-tracial monotone transport map	%
%%%%%%%%%%%%%%%%%%%%%%%%%%%%%%

\section{Construction of the Non-tracial monotone transport map}\label{construction}

For all this section, we consider only $q=0$ and maintain the same notational simplifications as above ($M=M_0$, $\varphi=\varphi_0$, $X_j^{(0)}=X_j$, and $\sigma^{\varphi_0}_{z}=\sigma_z$).  Recall that $V_0$ is defined by (\ref{quadratic_potential}) and that by Theorem \ref{free_Gibbs_is_free_quasi-free}, $\varphi$ is the free Gibbs state with potential $V_0$. Our goal is to construct $Y=(Y_1,\ldots, Y_N)\in (\mathscr{P}^{(R)})^N$ whose law with respect to $\varphi$ is the free Gibbs state with potential $V=V_0+W \in \mathscr{P}^{(R,\sigma)}_{c.s.}$, for $\|W\|_{R,\sigma}$ sufficiently small.\par
We will need differential operators $\partial_j$, $\J_\sigma$, $\J$, and $\D$ for $Y$ as well as $X$, so we adopt the following convention: differential operators which have no indices or have a numeric index refer to differentiation with respect to $X_1,\ldots, X_N$. Operators involving differentiation with respect to $Y_1,\ldots, Y_N$ shall be labeled $\partial_{Y_j}$, $\D_{Y_j}$, etc. We define these latter operators using the comments at the end of Subsection \ref{derivations_on_M_q}; that is, $\partial_{Y_j}(Y_{k_1}\cdots Y_{k_n})$ is computed exactly as one would compute $\partial_j(X_{k_1}\cdots X_{k_n})$ and exchanging $X_j$'s for $Y_j$'s in the end.\par
Assuming the law $\varphi_Y$ of $Y=(Y_1,\ldots, Y_N)$ is the free Gibbs state with potential $V$ and $1\otimes 1\in\dom{\partial_{Y_j}^*}$, (\ref{Schwinger-Dyson_2}) implies
	\begin{equation*}
		\partial_{Y_j}^*(1\otimes 1)=\D_{Y_j} (V_0(Y)+W(Y))=Y_j+\D_{Y_j}(W(Y)),
	\end{equation*}
or, in short
	\begin{align}\label{Schwinger-Dyson_Y}
		(\J_\sigma)_Y^*(1)=Y+(\D W)(Y).
	\end{align}
It will turn out that $Y=X+f$ for some $f=\D g$ and $g\in\mathscr{P}^{(R,\sigma)}_{c.s.}$, and so we start by considering the implications of assuming $Y$ is of this form.

%	Change of variables formula
%%%%%%%%%%%%%%%%

\subsection{Change of variables formula.}

\begin{lem}\label{change_of_variables}
Assume $Y$ is such that $\J_\sigma Y=(\partial_{X_j} Y_i)_{ij}\in M_N(M\bar{\otimes}M^{op})$ is bounded and invertible.
	\begin{enumerate}
	\item[(i)] Define
		\begin{equation*}
			\hat{\partial}_j(P)=\sum_{i=1}^N \partial_{X_i}(P)\#\left[ \J_\sigma Y^{-1}\# \J_\sigma X\right]_{ij},
		\end{equation*}
	then $\hat{\partial}_j=\partial_{Y_j}$.
	
	\item[(ii)] $\partial_{Y_j}^*(1\otimes 1)=\sum_l \partial_{X_l}^*\circ\hat{\sigma}_{-i}\left(\left[ \J_\sigma Y^{-1}\# \J_\sigma X\right]_{lj}^*\right)$. Hence
		\begin{equation}\label{change_of_variables_formula}
			(\J_\sigma)_Y^*(1)=\J_\sigma^*\left(\hat{\sigma}_{-i}\left( \J_\sigma X\# \left(\J_\sigma Y^{-1}\right)^*\right)\right),
		\end{equation}
	where $1\in M_N(M\bar{\otimes}M^{op})$ is the identity matrix.

	\item[(iii)] Assume in addition that $Y_j=\D_jG$ for some $G\in\mathscr{P}^{(R,\sigma)}_\varphi$ with $G=G^*$. Then $(\J_\sigma Y)^*=(\sigma_i\otimes 1)(\J_\sigma Y)$ and $\left(\J_\sigma Y^{-1}\right)^*=(\sigma_i\otimes 1)(\J_\sigma Y^{-1})$ and hence Equation (\ref{change_of_variables_formula}) becomes
		\begin{equation}\label{change_of_variables_formula_2}
			(\J_\sigma)_Y^*(1)=\J_\sigma^*\circ (1\otimes \sigma_i)\left( \J_\sigma X \# \J_\sigma Y^{-1}\right).
		\end{equation}
	
	\item[(iv)] For $G\in\mathscr{P}^{(R,\sigma)}_{\varphi}$,
		\begin{align*}
			(\sigma_{-is}\otimes\sigma_{-is})(\J_\sigma\D G) = A^s\# \J_\sigma\D G \# A^{-s},\qquad \forall s\in\R.
		\end{align*}
	\end{enumerate}
\end{lem}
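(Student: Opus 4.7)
The plan is to prove the four parts in order, each feeding into the next. Part (i) is a direct chain-rule verification. Observe that $\hat{\partial}_j$ is itself a derivation on $\mathscr{P}$ (and hence on the algebra generated by $Y_1,\ldots,Y_N$), because right $\#$-multiplication by a fixed tensor $Q$ respects the Leibniz rule: the identity $A\cdot Q=(1\otimes Q^\circ)\# A$ combined with associativity of $\#$ gives $\partial_{X_k}(PR)\# Q=(\partial_{X_k}(P)\# Q)\cdot R+P\cdot(\partial_{X_k}(R)\# Q)$. Since $\partial_{Y_j}$ is also a derivation, it suffices to check agreement on the generators $Y_k$:
\[
\hat{\partial}_j(Y_k)=\sum_{i}[\J_\sigma Y]_{ki}\#[\J_\sigma Y^{-1}\#\J_\sigma X]_{ij}=[\J_\sigma Y\#\J_\sigma Y^{-1}\#\J_\sigma X]_{kj}=[\J_\sigma X]_{kj}=\alpha_{kj}\,1\otimes 1=\partial_{Y_j}(Y_k).
\]

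Part (ii) follows by dualization. The KMS relation $(\varphi\otimes\varphi^{op})(XY)=(\varphi\otimes\varphi^{op})(Y\,\hat{\sigma}_{-i}(X))$ yields $\langle A\# B,C\rangle=\langle A,C\#\hat{\sigma}_{-i}(B^*)\rangle$ on $\mathscr{P}\otimes\mathscr{P}^{op}$. Setting $A=\partial_{X_l}(P)$, $B=M_{lj}:=[\J_\sigma Y^{-1}\#\J_\sigma X]_{lj}$, and $C=1\otimes 1$, and summing over $l$ gives
\[
\partial_{Y_j}^*(1\otimes 1)=\sum_{l}\partial_{X_l}^*\bigl(\hat{\sigma}_{-i}(M_{lj}^*)\bigr).
\]
Self-adjointness of $\J_\sigma X$ lets one rewrite $M^*=(\J_\sigma Y^{-1}\#\J_\sigma X)^*=\J_\sigma X\#(\J_\sigma Y^{-1})^*$, and the convention $[\J_\sigma^*(Q)]_j=\sum_l\partial_l^*(Q_{jl})$ repackages the sum as the claimed $\J_\sigma^*\circ\hat{\sigma}_{-i}\bigl(\J_\sigma X\#(\J_\sigma Y^{-1})^*\bigr)$.

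For part (iii) with $Y=\D G$, $G=G^*\in\mathscr{P}_\varphi^{(R,\sigma)}$, the identity $(\J_\sigma Y)^*=(\sigma_i\otimes 1)(\J_\sigma Y)$ is entrywise the twisted-Hessian symmetry $(\partial_l\D_j G)^*=(\sigma_i\otimes 1)(\partial_j\D_l G)$, i.e.\ the non-tracial analog of the classical symmetry of the Hessian of a cyclic-symmetric function. I would verify it by expanding $\D_j G$ from its explicit definition, computing $\partial_l \D_j G$ as a sum indexed by the deleted-letter positions in a monomial of $G$, and matching the two sides via the centralizer relation (\ref{centralizer_coefficients}) together with the identities $(\D_j G)^*=\bar{\D}_j G=\D_j G$ (the first general, the second because $G\in\mathscr{P}_\varphi$ means $\D_j\circ\sigma_i(G)=\D_j G$). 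The invertibility claim then follows by applying $*$ to $\J_\sigma Y\#\J_\sigma Y^{-1}=I$ and noting $(\sigma_i\otimes 1)$ is an automorphism of the $\#$-product. Substituting into (ii) and using $\hat{\sigma}_{-i}\circ(\sigma_i\otimes 1)=(1\otimes\sigma_i)$ together with the scalarity of the entries of $\J_\sigma X$ yields (\ref{change_of_variables_formula_2}).

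Part (iv) is a modular covariance identity. Entrywise it reads
\[
(\sigma_{-is}\otimes\sigma_{-is})(\partial_j\D_k G)=\sum_{l,m}[A^s]_{kl}\,[A^{-s}]_{mj}\,\partial_m\D_l G,
\]
which I would derive from: (a) $\sigma_z(G)=G$ for all $z\in\C$ when $G\in\mathscr{P}_\varphi$; (b) the covariance $\delta_p\circ\sigma_z=\sum_q[A^{iz}]_{qp}(\sigma_z\otimes\sigma_z)\circ\delta_q$, verified on generators from $\sigma_z(X_a)=\sum_b[A^{iz}]_{ab}X_b$ and extended by the Leibniz rule, which propagates to $\partial_j\circ\sigma_z=(\sigma_z\otimes\sigma_z)\sum_l[A^{iz}]_{lj}\partial_l$; and (c) an analogous transformation for $\D_k G$, readable from $\D_k=m\circ\diamond\circ(1\otimes\sigma_{-i})\bar{\partial}_k$ and (b). Combining (a)--(c) at $z=-is$ and collecting the $A^{\pm s}$-factors yields the claim. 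The main technical obstacle is Part (iii): the twisted Hessian symmetry requires simultaneous bookkeeping of the $A$-factors and $\sigma_{-i}$-twists carried by $\D_j$ and $\partial_l$, and the centralizer relation (\ref{centralizer_coefficients}) is what makes the combinatorics match. Once (iii) is in hand, the reduction from (ii) to (\ref{change_of_variables_formula_2}) and the proof of (iv) become mechanical matching of $A^s$-factors.
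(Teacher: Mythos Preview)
Your proposal is correct and follows essentially the same route as the paper: (i) is the generator check, (ii) is the weak computation moving the tensor factor across via the KMS/modular identity, and (iv) is the modular covariance of $\partial_j$ and $\D_k$. For (iii) the paper organizes your ``bookkeeping of $A$-factors'' more concretely: it writes the difference $\tilde{\partial}_j\bar{\D}_k-(\sigma_i\otimes 1)\partial_k\D_j$ applied to $\sigma_{it}(P)$ as $H_t(P)-H_{t-1}(P)$ for an explicit operator $H_t$, then checks $H_{t-1}(P)=H_t(\sigma_{-i}(P))$, so the centralizer condition $\sigma_{-i}(G)=G$ kills it --- the same mechanism you invoke via (\ref{centralizer_coefficients}), just packaged as a telescoping identity rather than coefficient matching.
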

\begin{proof}
Let $Q=\J_\sigma Y$.
	\begin{enumerate}
	\item[(i):] We verify
		\begin{align*}
			\hat{\partial}_iY_k&=\sum_{i=1}^N \partial_{X_i}Y_k\#[Q^{-1}\# \J_\sigma X]_{ij}=\sum_{i=1}^N Q_{ki}\#[Q^{-1}\#\J_\sigma X]_{ij}\\
				&=[Q\#Q^{-1}\#\J_\sigma X]_{kj}=[\J_\sigma X]_{kj}=\partial_{X_j}X_k=\alpha_{kj}1\otimes 1=\partial_{Y_j}Y_k.
		\end{align*}
		
	\item[(ii):] We compute
		\begin{align*}
			\<\partial_{Y_j}^*(1\otimes 1), X_{k_1}\cdots X_{k_p}\>_\varphi &= \<1\otimes 1, \partial_{Y_j}(X_{k_1}\cdots X_{k_p})\>_{\varphi\otimes\varphi^{op}}\\
							&=\sum_{l=1}^N \<1\otimes 1, \partial_{X_l}(X_{k_1}\cdots X_{k_p})\#[Q^{-1}\#\J_\sigma X]_{lj}\>_{\varphi\otimes\varphi^{op}}\\
							&=\sum_{l=1}^N \<\hat{\sigma}_{-i}\left([Q^{-1}\# \J_\sigma X]_{lj}^*\right), \partial_{X_l}(X_{k_1}\cdots X_{k_p})\>_{\varphi\otimes\varphi^{op}}\\
							&=\< \sum_{l=1}^N \partial_{X_l}^*\circ \hat{\sigma}_{-i}\left([Q^{-1}\# \J_\sigma X]_{lj}^*\right),X_{k_1}\cdots X_{k_p}\>_\varphi.
		\end{align*}
		 Recalling that $\J_\sigma X=\J_\sigma X^*$, the definition of $\J_\sigma^*$ implies (\ref{change_of_variables}).
		 
	\item[(iii):] Suppose $G=G^*\in\mathscr{P}^{(R,\sigma)}_{\varphi}$. Then 
		\begin{equation*}
			[(\J_\sigma\D G)^*]_{jk}=[\J_\sigma\D G]_{kj}^*=\partial_j\circ\D_k(G)^*=\tilde{\partial}_j\circ\bar{\D}_k(G^*)=\tilde{\partial}_j\circ\bar{\D}_k(G).
		\end{equation*}		
	A computation on monomials shows that
		\begin{align*}
			\left[\tilde{\partial}_j\circ\bar{\D}_k-(\sigma_i\otimes 1)\circ\partial_k\circ\D_j\right]\circ\sigma_{it}(P)=H_t(P)-H_{t-1}(P),
		\end{align*}
	where
		\begin{align*}
			H_t(P)=\sum_{a,b=1}^N \sum_{P=AX_bB X_a C}\left[\frac{2A^t}{1+A^{-1}}\right]_{ka}\left[\frac{2A^{t+1}}{1+A}\right]_{jb} \sigma_{i(t+1)}(B)\otimes \sigma_{it}(C)\sigma_{i(t+1)}(A).
		\end{align*}
	We claim that $H_{t-1}(P)=H_t(\sigma_{-i}(P))$. Indeed
	\begin{align*}
		H_{t-1}(P)=&\sum_{a,b=1}^N \sum_{P=AX_bBX_aC} \sum_{p,q=1}^N \left[\frac{2 A^t}{1+A^{-1}}\right]_{kp}[A^{-1}]_{pa}\left[\frac{2A^{t+1}}{1+A}\right]_{jq}[A^{-1}]_{qb}\\
				&\times \sigma_{i(t+1)}( \sigma_{-i}(B))\otimes \sigma_{it}(\sigma_{-i}(C)) \sigma_{i(t+1)}(\sigma_{-i}(A))\\
 =& \sum_{a,b,p,q=1}^N [A^{-1}]_{pa}[A^{-1}]_{qb}\sum_{P=\sigma_i(A)X_l\sigma_i(B)X_k\sigma_i(C)} \left[\frac{2 A^t}{1+A^{-1}}\right]_{kp} \left[\frac{2A^{t+1}}{1+A}\right]_{jq}\\
 				&\times \sigma_{it(t+1)}(B)\otimes \sigma_{it}(C)\sigma_{i(t+1)}(A).
	\end{align*}
Note that $\sigma_i(X_p)=\sum_{a=1}^N [A^{-1}]_{pa} X_a$ and $\sigma_i(X_q)=\sum_{b=1}^N [A^{-1}]_{qb} X_b$. So continuing the above computation we have
	\begin{align*}
		H_{t-1}(P)&=\sum_{p,q=1}^N \sum_{P=\sigma_i(AX_pBX_qC)} \left[\frac{2 A^t}{1+A^{-1}}\right]_{kp} \left[\frac{2A^{t+1}}{1+A}\right]_{jq} \sigma_{it(t+1)}(B)\otimes \sigma_{it}(C)\sigma_{i(t+1)}(A)\\
				&=\sum_{p,q=1}^N \sum_{\sigma_{-i}(P)=AX_pBX_qC} \left[\frac{2 A^t}{1+A^{-1}}\right]_{kp} \left[\frac{2A^{t+1}}{1+A}\right]_{jq} \sigma_{it(t+1)}(B)\otimes \sigma_{it}(C)\sigma_{i(t+1)}(A)\\
				&=H_t(\sigma_{-i}(P)).
	\end{align*}
Thus from $G=\sigma_{-i}(G)$ we obtain
	\begin{align*}
		\left[\tilde{\partial}_j\circ\bar{\D}_k-(\sigma_i\otimes 1)\circ\partial_k\circ\D_j\right]\circ\sigma_{it}(G)=0,
	\end{align*}
and hence
	\begin{align*}
		(\J_\sigma \D G)^*=(\sigma_i\otimes 1)(\J_\sigma\D G).
	\end{align*}
	Now, if $Y=\D G$ for such $G$ then $\J_\sigma Y^*=(\sigma_i\otimes 1)(\J_\sigma Y)$ and $\J_\sigma Y^{-1}$ satisfies this formula as well because $\sigma_i\otimes 1$ is a homomorphism. That Equation (\ref{change_of_variables_formula}) becomes Equation (\ref{change_of_variables_formula_2}) is then clear after realizing $\J_\sigma X=(\sigma_{it} \otimes \sigma_{is})(\J_\sigma X)$  for all $t,s\in \R$ (since the entries of $\J_\sigma X$ are merely scalars multiplied with $1\otimes 1$).
	
	\item[(iv):] Recall
		\begin{align*}
			(\sigma_{-it}\otimes\sigma_{-it})\circ\partial_j\circ\sigma_{it} = \sum_{k=1}^N [A^{-t}]_{kj} \partial_k.
		\end{align*}
	Also,
		\begin{align*}
			\bar{\partial}_j=(\sigma_i\otimes\sigma_i)\circ\partial_j\circ\sigma_{-i},
		\end{align*}
	so that
		\begin{align*}
			(\sigma_{-it}\otimes\sigma_{-it})\circ\bar{\partial}_j\circ\sigma_{it}=\sum_{k=1}^N [A^{-t}]_{kj}\bar{\partial}_k.
		\end{align*}
	Using these identities we have
		\begin{align*}
			(\sigma_{-is}\otimes\sigma_{-is})\circ \partial_k\circ \D_j &= (\sigma_{-is}\otimes\sigma_{-is})\circ\partial_k\circ m \circ\diamond\circ(1\otimes\sigma_{-i})\circ\bar{\partial}_j\\
							&=\sum_{a,b=1}^N [A^{-s}]_{ak}[A^{-s}]_{bj} \partial_a\circ m\circ\diamond\circ(1\otimes\sigma_{-i})\circ \circ\bar{\partial}_b\circ\sigma_{-is}\\
							&=\sum_{a,b,=1}^N [A^{s}]_{jb} [A^{-s}]_{ak}  \partial_a\circ\D_b\circ\sigma_{-is}.
		\end{align*}
	Hence for $G\in\mathscr{P}_\varphi^{(R,\sigma)}$ we have
		\begin{align*}
			[(\sigma_{-is}\otimes\sigma_{-is})(\J_\sigma \D G)]_{jk}&=(\sigma_{-is}\otimes\sigma_{-is})\circ \partial_k\circ \D_j(G)\\
				&=\sum_{a,b,=1}^N [A^s]_{jb} [A^{-s}]_{ak} \partial_a\circ\D_b\circ\sigma_{-is}(G)\\
				&=\sum_{a,b,=1}^N [A^s]_{jb} [A^{-s}]_{ak} [\J_\sigma\D G]_{ba} = [A^s\# \J_\sigma\D G\# A^{-s}]_{jk},
		\end{align*}
	for each $j,k=1,\ldots, N$.\qedhere
	\end{enumerate}
\end{proof}

\begin{cor}
Assume $g=g^*\in\mathscr{P}^{(R,\sigma)}_{\varphi}$ and put $G=V_0+g$ and $f_j=\D_j g$. Let $Y_j=X_j+f_j$ so that $Y=\D G$. Define $B=\J_\sigma f\# \J_\sigma X^{-1}$ and assume $1+B$ is invertible. Then Equation (\ref{Schwinger-Dyson_Y}) is equivalent to the equation
	\begin{equation}\label{S-D_Y_2}
		\J_\sigma^*\circ(1\otimes\sigma_i)\left( \frac{1}{1+B}\right) = X+f+(\D W)(X+f).
	\end{equation}
\end{cor}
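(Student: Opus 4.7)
My plan is to apply Lemma \ref{change_of_variables}(iii) directly, after verifying its hypotheses, and then perform a short algebraic simplification of the expression $\J_\sigma X \# \J_\sigma Y^{-1}$.

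First I would check the hypotheses of Lemma \ref{change_of_variables}(iii). Since $V_0^*=V_0$ (shown explicitly just before Theorem \ref{free_Gibbs_is_free_quasi-free}) and $g=g^*$, we have $G^*=G$; and since $V_0\in\mathscr{P}_{c.s.}^{(R,\sigma)}\subset\mathscr{P}_\varphi^{(R,\sigma)}$ together with the hypothesis $g\in\mathscr{P}_\varphi^{(R,\sigma)}$, we obtain $G\in\mathscr{P}_\varphi^{(R,\sigma)}$. Moreover, the invertibility of $1+B$ (together with the invertibility of $\J_\sigma X$ recorded after Lemma \ref{cyclic_derivative_bounded}) will guarantee that $\J_\sigma Y$ is bounded and invertible, which is the standing assumption of parts (i)--(iii) of Lemma \ref{change_of_variables}.

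Next I would rewrite $\J_\sigma Y$ in terms of $B$. Since $Y_j=X_j+f_j$, linearity of $\partial_k$ gives
	\begin{align*}
		\J_\sigma Y= \J_\sigma X+\J_\sigma f = \J_\sigma X + B\#\J_\sigma X=(1+B)\#\J_\sigma X,
	\end{align*}
where I used $B\#\J_\sigma X=\J_\sigma f\#\J_\sigma X^{-1}\#\J_\sigma X=\J_\sigma f$. Inverting, $\J_\sigma Y^{-1}=\J_\sigma X^{-1}\#(1+B)^{-1}$, so
	\begin{align*}
		\J_\sigma X\#\J_\sigma Y^{-1}=\J_\sigma X\#\J_\sigma X^{-1}\#(1+B)^{-1}=\frac{1}{1+B}.
	\end{align*}
Substituting into Equation (\ref{change_of_variables_formula_2}) of Lemma \ref{change_of_variables}(iii) yields
	\begin{align*}
		(\J_\sigma)_Y^*(1)=\J_\sigma^*\circ(1\otimes\sigma_i)\left(\frac{1}{1+B}\right).
	\end{align*}

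Finally, the right-hand side of (\ref{Schwinger-Dyson_Y}) is $Y+(\D W)(Y)=X+f+(\D W)(X+f)$ by definition of $Y$, so equating the two sides of (\ref{Schwinger-Dyson_Y}) produces exactly (\ref{S-D_Y_2}). Conversely, these rewritings are all reversible identities, so (\ref{S-D_Y_2}) implies (\ref{Schwinger-Dyson_Y}). The only nontrivial ingredient is Lemma \ref{change_of_variables}(iii); the remainder is a one-line manipulation, so I do not expect any real obstacle beyond double-checking that the self-adjointness and $\sigma$-invariance hypotheses on $G$ are in force.
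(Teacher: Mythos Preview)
Your proposal is correct and follows essentially the same route as the paper: both verify that $\J_\sigma Y=(1+B)\#\J_\sigma X$ is invertible, compute $\J_\sigma X\#\J_\sigma Y^{-1}=(1+B)^{-1}$, and then invoke Lemma~\ref{change_of_variables}(iii). Your write-up is slightly more explicit about checking the self-adjointness and centralizer hypotheses on $G$, but the argument is the same.
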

\begin{proof}
Since $\J_\sigma X+ \J_\sigma f=(1+B)\#\J_\sigma X$, $\J_\sigma Y=\J_\sigma X+\J_\sigma f$ is invertible as a consequence of $1+B$ and $\J_\sigma X$ both being invertible. Then upon noting that
	\begin{equation*}
		\J_\sigma X\# (\J_\sigma X+ \J_\sigma f)^{-1}= 1\# (1+B)^{-1}=\frac{1}{1+B},
	\end{equation*}
the corollary follows immediately from Lemma \ref{change_of_variables}, (ii) and (iii).
\end{proof}

%	An equivalent form of Equation (\ref{S-D_Y_2})
%%%%%%%%%%%%%%%%%%%%%%%%%

\subsection{An equivalent form of Equation (\ref{S-D_Y_2})}\label{equivalent_S-D}

\begin{lem}\label{K_1}
Assume that the map $\xi\mapsto (1+B)\# \xi$ is invertible on $\left(\mathscr{P}^{(R)}\right)^N$, and that $f=\D g$ for some self-adjoint $g\in\mathscr{P}_\varphi^{(R,\sigma)}$. Let
	\begin{equation*}
		K(f)=-\J_\sigma^*\circ(1\otimes \sigma_i)(B)- f.
	\end{equation*}
Then Equation (\ref{S-D_Y_2}) is equivalent to
	\begin{equation*}
		K(f)=\D(W(X+f))+\left[B\# f+ B\#\J_\sigma^*\circ(1\otimes\sigma_i)\left(\frac{B}{1+B}\right)-\J_\sigma^*\circ(1\otimes\sigma_i)\left(\frac{B^2}{1+B}\right)\right].
	\end{equation*}
\end{lem}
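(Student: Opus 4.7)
The plan is to prove the equivalence by pure algebraic manipulation, resting on three ingredients: (i) the Schwinger-Dyson equation for $\varphi$, namely $\J_\sigma^*(1)=X$, which holds by Theorem \ref{free_Gibbs_is_free_quasi-free} since $\varphi$ is the free Gibbs state with potential $V_0$; (ii) the resolvent identity $\frac{1}{1+B}=1-B+\frac{B^2}{1+B}$, valid because $1+B$ is invertible; and (iii) a non-commutative chain rule
$$\D(W(X+f))=(1+B)\#(\D W)(X+f),$$
which, crucially, relies on the standing assumption $f=\D g$ with $g=g^*\in \mathscr{P}_\varphi^{(R,\sigma)}$.

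For the forward direction, I substitute (ii) inside $\J_\sigma^*\circ(1\otimes\sigma_i)$ applied to the LHS of (\ref{S-D_Y_2}) and use linearity; the piece coming from $1$ gives $\J_\sigma^*(1)=X$ by (i), and rearranging yields the intermediate form $K(f)=(\D W)(X+f)-\J_\sigma^*\circ(1\otimes\sigma_i)(\tfrac{B^2}{1+B})$. Subtracting $X=\J_\sigma^*(1)$ from both sides of (\ref{S-D_Y_2}) also gives $\J_\sigma^*\circ(1\otimes\sigma_i)(\tfrac{B}{1+B})=-f-(\D W)(X+f)$, and applying $B\#(\cdot)$ to this identity produces $-B\#(\D W)(X+f)=B\#f+B\#\J_\sigma^*\circ(1\otimes\sigma_i)(\tfrac{B}{1+B})$. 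Invoking (iii) in the form $(\D W)(X+f)=\D(W(X+f))-B\#(\D W)(X+f)$ and substituting into the intermediate form delivers the target equation. The converse reverses the argument: given the displayed equation of the lemma, substituting (iii) and rearranging produces $(1+B)\#Z=0$ where $Z=f+(\D W)(X+f)+\J_\sigma^*\circ(1\otimes\sigma_i)(\tfrac{B}{1+B})$; the assumed invertibility of $\xi\mapsto(1+B)\#\xi$ forces $Z=0$, which rearranges back to (\ref{S-D_Y_2}).

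The main obstacle is establishing the chain rule (iii). I would start from the standard chain rule for the free difference quotient $\bar\partial_j(W(Y))=\sum_m (\delta_{t_m}W)(Y)\#\bar\partial_j(Y_m)$ and apply $m\circ\diamond\circ(1\otimes\sigma_{-i})$ using the algebraic identity $m\circ\diamond\circ(1\otimes\sigma_{-i})(\eta_1\#\eta_2)=[(\sigma_{-i}\otimes 1)\eta_2^\diamond]\#[m\circ\diamond\circ(1\otimes\sigma_{-i})\eta_1]$. The scalar part $\alpha_{jm}1\otimes 1$ of $\bar\partial_j Y_m=\bar\partial_j X_m+\bar\partial_j f_m$ reconstructs $(\D_j W)(Y)$, while the $\bar\partial_j(f_m)$ piece generates the correction $\sum_m [(\sigma_{-i}\otimes 1)\tilde\partial_j(f_m)]\#[m\circ\diamond\circ(1\otimes\sigma_{-i})(\delta_{t_m}W)(Y)]$. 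Applying Lemma \ref{change_of_variables}(iv) and, most importantly, Lemma \ref{change_of_variables}(iii) to $G=V_0+g$ yields the symmetry $(\sigma_{-i}\otimes 1)\tilde\partial_j(f_m)=\partial_m(f_j)$; combined with the matrix identity $\sum_a [\tfrac{1+A}{2}]_{ka}\alpha_{am}=\delta_{km}$ this collapses the correction to exactly $[B\#(\D W)(Y)]_j$. This step is the sole place where the hypothesis $f=\D g$ with $g$ self-adjoint in the centralizer is used, and it is precisely what supplies the Jacobian-type symmetry that the chain rule demands.
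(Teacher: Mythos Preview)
Your proposal is correct and follows essentially the same approach as the paper: both arguments use the resolvent expansions of $\frac{1}{1+B}$ together with $\J_\sigma^*(1)=X$ to reduce the equivalence to the chain rule $\D(W(X+f))=(1+B)\#(\D W)(X+f)$, and both prove that chain rule via the symmetry $(\sigma_{-i}\otimes 1)\tilde\partial_j(f_m)=\partial_m(f_j)$ coming from Lemma~\ref{change_of_variables}(iii). The only organizational difference is that the paper obtains the equivalence in one stroke by multiplying the rearranged form of (\ref{S-D_Y_2}) by $(1+B)$, whereas you derive two consequences of (\ref{S-D_Y_2}) and combine them for the forward direction and then invoke invertibility of $(1+B)\#(\cdot)$ explicitly for the converse; these are the same algebra arranged differently.
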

\begin{proof}
Using $\frac{1}{1+x}=1-\frac{x}{1+x}$ and $\J_\sigma^*\circ(1\otimes \sigma_i)(1)=\J_\sigma^*(1)=X$, we see that Equation (\ref{S-D_Y_2}) is equivalent to
	\begin{align*}
		0=\J_\sigma^*\circ(1\otimes\sigma_i)\left(\frac{B}{1+B}\right)+f+(\D W)(X+f).
	\end{align*}
By the assumed invertibility of multiplying by $(1+B)$, this is then equivalent to
	\begin{align*}
		0=&\J_\sigma^*\circ(1\otimes\sigma_i)\left(\frac{B}{1+B}\right)+f+(\D W)(X+f)\\
			&+B\#\J_\sigma^*\circ(1\otimes\sigma_i)\left(\frac{B}{1+B}\right)+B\#f+B\#(\D W)(X+f).
	\end{align*}
Using $\frac{x}{1+x}=x-\frac{x^2}{1+x}$, we then obtain
	\begin{align*}
		K(f)=&(\D W)(X+f)+B\#(\D W)(X+f) \\
			&+\left[B\# f+ B\#\J_\sigma^*\circ(1\otimes\sigma_i)\left(\frac{B}{1+B}\right)-\J_\sigma^*\circ(1\otimes\sigma_i)\left(\frac{B^2}{1+B}\right)\right].
	\end{align*}
Thus it remains to show
	\begin{align*}
		\D_j (W(X+f)) = [(1+B)\# (\D W)(X+f)]_j,
	\end{align*}
for each $j=1,\ldots,N$. Initially suppose $W=X_{k_1}\cdots X_{k_n}$ (the general case will follow via linearity), then
	\begin{align*}
		W(X+f)= (X_{k_1}+f_{k_1})\cdots (X_{k_n}+f_{k_n}).
	\end{align*}
For notational convenience, if we are focusing on the $k_l$th factor then we will write $W(X+f)=A_l(X_{k_l}+f_{k_l})B_l$. Using the derivation property of $\bar{\partial}_j$ in $\D_j=m\circ\diamond\circ(1\otimes\sigma_{-i})\circ\bar{\partial}_j$ we have
	\begin{align*}
		\D_j(W(X+f))=&\sum_{l=1}^n m\circ\diamond\circ(1\otimes \sigma_{-i})\left[ A_l (\alpha_{jk_l}1\otimes 1+\bar{\partial}_j(f_{k_l}) )B_l\right]\\
				=&\sum_{l=1}^n  \alpha_{j k_l} \sigma_{-i}(B_l)A_l + (1\otimes\sigma_{-i})\circ\bar{\partial}_j(f_{k_l})^\diamond\# \sigma_{-i}(B_l)A_l\\
				=&(\D W)(X+f)+\sum_{l=1}^n \partial_{k_l}(f_j)\#\sigma_{-i}(B_l)A_l,
	\end{align*}
where we have used $(\J_\sigma f)^*=(\J_\sigma\D g)^*=(\sigma_i\otimes 1)(\J_\sigma\D g)=(\sigma_i\otimes 1)(\J_\sigma f)$. Now
	\begin{align*}
		[B\# (\D W)(X+f)]_j&=\sum_{k=1}^N [B]_{jk}\# (\D_k W)(X+f)=\sum_{k=1}^N \sum_{l=1}^N [\J_\sigma f]_{jl}\# [\J_\sigma X^{-1}]_{lk}\# (\D_k W)(X+f)\\
				&= \sum_{l=1}^N [\J_\sigma f]_{jl}\#\sum_{k=1}^N [\J_\sigma X^{-1}]_{lk} \sum_{p=1}^N \alpha_{kp}\ m\circ\diamond\circ(1\otimes\sigma_{-i})\circ\delta_p (W)(X+f)\\
				&=\sum_{l=1}^N [\J_\sigma f]_{jl}\# m\circ\diamond\circ(1\otimes\sigma_{-1})\circ\delta_l(W)(X+f)=\sum_{l=1}^n [\J_\sigma f]_{j k_l}\# \sigma_{-i}(B_l)A_l,
	\end{align*}
which is precisely the second term in our above computation of $\D_j(W(X+f))$.
\end{proof}

%	Some identities involving $\J_\sigma$ and $\D$.
%%%%%%%%%%%%%%%%%%%%%

\subsection{Some identities involving $\J_\sigma$ and $\D$.}\label{identities}

\begin{lem}\label{pictorial_lemma}
Let $g\in\mathscr{P}_{\varphi}^{(R,\sigma)}$ and let $f=\D g$. Then for any $m\geq -1$ we have:
	\begin{align*}
		-\J_\sigma^*\circ(1\otimes \sigma_{i})(B^{m+2})+B\#\J_\sigma^*\circ(1\otimes\sigma_i)(B^{m+1})=\frac{1}{m+2}
\D\left[(\varphi\otimes 1)\circ\text{Tr}_{A^{-1}} +(1\otimes \varphi)\circ\text{Tr}_A\right](B^{m+2})
	\end{align*}
\end{lem}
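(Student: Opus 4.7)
The plan is to expand both sides using the explicit adjoint formula for $\partial_j^*$ on elementary tensors (Equation (\ref{adjoint_formula})) and then match the two sides term-by-term. Since $B = \J_\sigma f\#\J_\sigma X^{-1}$ with $f=\D g$, and since the entries of $\J_\sigma X^{-1}$ are scalars times $1\otimes 1$, each matrix entry $[B]_{i,j}$ is a sum of elementary tensors in $\mathscr{P}\otimes\mathscr{P}^{op}$ weighted by entries of $\frac{1+A}{2}$; thus $[B^{m+2}]_{i_0 i_{m+2}}$ unfolds into a sum of chains $T_1\#T_2\#\cdots\#T_{m+2}$ where each $T_s$ is such an elementary tensor. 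Lemma \ref{change_of_variables}(iii)--(iv) supplies the compatibility of $B$ with $\sigma_i$ that we will need for the bookkeeping.

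The first calculation is to apply $\J_\sigma^*\circ(1\otimes\sigma_i)$ entrywise. For each chain $T_1\#\cdots\#T_{m+2} = P\otimes Q$, Equation (\ref{adjoint_formula}) produces a ``leading'' term of the form $P X_j Q$ together with two ``contraction'' terms in which one application of $\bar\partial_j$ and a $\varphi$-evaluation extract a cyclic derivative from an internal factor. When the same procedure is applied to $B\#\J_\sigma^*\circ(1\otimes\sigma_i)(B^{m+1})$, the extra $B\#$ on the left shifts every $X_j$-insertion position by one factor. In the difference $B\#\J_\sigma^*\circ(1\otimes\sigma_i)(B^{m+1}) - \J_\sigma^*\circ(1\otimes\sigma_i)(B^{m+2})$, the leading terms corresponding to interior $X_j$-insertions telescope and cancel, leaving only the endpoint insertions plus the contraction terms. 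A cyclic rearrangement of the $m+2$ factors, using (\ref{differentiating_sigma}) and the $\sigma_i$-compatibility of $\J_\sigma f$, should then identify the surviving contributions with the cyclic derivatives of the two trace functionals on the right; the prefactor $\frac{1}{m+2}$ arises because after the rearrangement each of the $m+2$ positions in the chain contributes the same term.

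The main obstacle I foresee is the bookkeeping of the modular automorphisms. In the tracial case ($A=1$) this identity reduces to a plain telescoping of $\D\,\text{Tr}(B^{m+2})$, but in the non-tracial case the $\sigma_i$-twists produced by $(1\otimes\sigma_i)$, by the formula for $\partial_j^*$, and by $\bar\partial_j = (\sigma_i\otimes\sigma_i)\circ\partial_j\circ\sigma_{-i}$ must conspire so that the weighted traces on the right-hand side appear with exactly the scalars $[A^{-1}]_{ij}$ and $[A]_{ij}$ rather than more complicated products of $A^{\pm 1}$. This should follow from the matrix identity $\frac{2}{1+A}\cdot\frac{1+A}{2} = 1$ at each ``seam'' between consecutive $B$'s together with the relation $\D\circ\mathscr{S}\Pi = \D$ of Lemma \ref{D_of_S}, but requires a careful pictorial argument using the box notation from Subsection \ref{tensor_product_notation}.
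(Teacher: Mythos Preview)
Your plan to expand both sides via Equation~(\ref{adjoint_formula}) and match terms is reasonable in spirit, but the mechanism you describe for the cancellation is not what actually happens, and the crucial step---how the weights $A^{-1}$ and $A$ in $\text{Tr}_{A^{-1}}$ and $\text{Tr}_A$ emerge---is not identified.

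First, on the ``telescoping'': there is no multi-position telescoping of $X_j$-insertions. For a single chain $T_1\#\cdots\#T_{m+2}=P\otimes Q$, the adjoint formula produces exactly one leading term $PX_jQ$, not one per factor. The point is rather that this single leading term for $(1\otimes\sigma_i)(B^{m+2})$ coincides with $B\#(\text{leading term for }(1\otimes\sigma_i)(B^{m+1}))$, so the two leading contributions cancel outright. What survives is the \emph{difference of the contraction terms}, and it is here that the real work lies: one must show that this difference equals the right-hand side. Your appeal to ``cyclic rearrangement'' and to Lemma~\ref{D_of_S} does not explain why the two pieces of the contraction difference acquire precisely the scalars $[A^{-1}]_{ij}$ and $[A]_{ij}$ that constitute $\text{Tr}_{A^{-1}}$ and $\text{Tr}_A$; Lemma~\ref{D_of_S} is about $\mathscr{S}\Pi$ and plays no role here.

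The paper takes a different route: it proves the identity weakly, pairing both sides against a test vector $P\in(\mathscr{P}^{(R)})^N$. On the left, after moving $\J_\sigma^*$ to the other side and using $(\J_\sigma f)^*=(\sigma_i\otimes 1)(\J_\sigma f)$ from Lemma~\ref{change_of_variables}(iii), the leading pieces cancel and what remains is $\phi(Q^P\,\J_\sigma X^{-1}\,B^{m+1})$, where $Q^P$ collects the terms in which $\partial_k$ differentiates the entries of $\hat\sigma_i(\J_\sigma f)$. On the right, computing $\langle\D(\varphi\otimes 1)\text{Tr}_{A^{-1}}(B^{m+2}),P\rangle$ and $\langle\D(1\otimes\varphi)\text{Tr}_{A}(B^{m+2}),P\rangle$ yields $(m+2)\phi(\Delta_{(1,P)}(B)A^{-1}B^{m+1})$ and $(m+2)\phi(\Delta_{(2,P)}(B)A\,B^{m+1})$ respectively. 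The identity then reduces to the entrywise equality
\[
\Delta_{(1,P)}(B)\,A^{-1}+\Delta_{(2,P)}(B)\,A \;=\; Q^P\,\J_\sigma X^{-1},
\]
which the paper verifies from the two operator identities
\[
(\delta_r\otimes 1)\circ\hat\sigma_i\circ\partial_k=\bigl(\sigma_i\otimes(\hat\sigma_i\circ\partial_k)\bigr)\circ\sum_b [A^{-1}]_{br}\delta_b,\qquad
(1\otimes\delta_r)\circ\hat\sigma_i\circ\partial_k=\bigl((\hat\sigma_i\circ\partial_k)\otimes\sigma_{-i}\bigr)\circ\sum_b [A]_{br}\delta_b.
\]
These two identities are exactly what produces the $A^{-1}$ and $A$ weights; they are the missing ingredient in your sketch. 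If you pursue your direct approach, you will need them (or their equivalents) at the step where you match the surviving contraction terms to the traces on the right.
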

\begin{proof}
We prove the identity weakly. Let $P\in (\mathscr{P}^{(R)})^N$ be a test function and denote $\phi=\varphi\otimes\varphi^{op}\otimes\text{Tr}$. Then
	\begin{align*}
		\<P\right.&\left., -\J_\sigma^*\circ(1\otimes\sigma_i)(B^{m+2}) + B\# \J_\sigma^*\circ(1\otimes\sigma_i)(B^{m+1})\>_\varphi\\
		 &= -\< \J_\sigma P, (1\otimes\sigma_i)(B^{m+2})\>_{\phi} + \varphi\left( \sum_{i,j=1}^N P_i^* \# B_{ij} \# \left[\J_\sigma^*\circ(1\otimes\sigma_i)(B^{m+1})\right]_j \right)\\
		 &=-\< \J_\sigma P, (1\otimes\sigma_i)(B^{m+2})\>_{\phi} + \varphi\left( \sum_{i,j=1}^N (\sigma_i\otimes 1)(B_{ij}^\diamond) \# P_i^*\# \left[\J_\sigma^*\circ(1\otimes\sigma_i)(B^{m+1})\right]_j\right)\\
		 &=-\< \J_\sigma P, (1\otimes\sigma_i)(B^{m+2})\>_{\phi} + \sum_{i,j=1}^N\< (1\otimes\sigma_{-i})(B_{ij}^*)\# P_i, \left[\J_\sigma^*\circ(1\otimes\sigma_i)(B^{m+1})\right]_j\>_\varphi\\
		 &=-\< \J_\sigma P, (1\otimes\sigma_i)(B^{m+2})\>_{\phi} + \<\J_\sigma\left\{(1\otimes\sigma_{-i})(B^*)\# P\right\}, (1\otimes \sigma_i)(B^{m+1})\>_{\phi}\\
		  &=-\< \J_\sigma P, (1\otimes\sigma_i)(B^{m+2})\>_{\phi} + \<\J_\sigma X^{-1}\# \J_\sigma\left\{\hat{\sigma}_i(\J_\sigma f)\# P\right\}, (1\otimes \sigma_i)(B^{m+1})\>_{\phi},
	\end{align*}
where we have used $(\J_\sigma f)^*=(\sigma_i\otimes 1)(\J_\sigma f)$ from Lemma \ref{change_of_variables}.(iii). Now we focus on the term $\J_\sigma\{\hat{\sigma}_i(\J_\sigma f)\# P\}$:
	\begin{align*}
		\left[\J_\sigma\{\hat{\sigma}_i(\J_\sigma f)\# P\}\right]_{jk}= \sum_{l=1}^N (\partial_k\otimes 1)\circ \hat{\sigma}_i\circ\partial_l (f_j) \#_2 P_l + (1\otimes \partial_k)\circ\hat{\sigma}_i\circ\partial_l(f_j) \#_1 P_l + \hat{\sigma}_i\circ\partial_l (f_j) \# \partial_k(P_l),
	\end{align*}
where $a\otimes b\otimes c\#_1 \xi = a\xi b\otimes c$ and $a\otimes b\otimes c \#_2 \xi= a\otimes b\xi c$. Define
	\begin{align*}
		Q^P_{jk} = \sum_{l=1}^N (\partial_k\otimes 1)\circ \hat{\sigma}_i\circ\partial_l (f_j) \#_2 P_l + (1\otimes \partial_k)\circ\hat{\sigma}_i\circ\partial_l(f_j) \#_1 P_l ,
	\end{align*}
so that
	\begin{align*}
		\J_\sigma\{\hat{\sigma}_i(\J_\sigma f)\# P\} = Q^P + \hat{\sigma}_i (\J_\sigma f)\# \J_\sigma P.
	\end{align*}
Continuing our initial computation we obtain
	\begin{align*}
		\<P, -\J_\sigma^*\circ(1\otimes\sigma_i)\right.&\left.(B^{m+2}) + B\# \J_\sigma^*\circ(1\otimes\sigma_i)(B^{m+1})\>_\varphi\\
			 =&-\phi\left( \left(\J_\sigma P\right)^*\# (1\otimes\sigma_i)(B^{m+1})\right)+ \phi\left( (Q^P)^*\#\J_\sigma X^{-1} \# (1\otimes\sigma_i)(B^{m+1})\right)\\
			   &+ \phi\left( (\J_\sigma P)^*\# \hat{\sigma}_{-i}( (\J_\sigma f)^*) \# \J_\sigma X^{-1} \# (1\otimes \sigma_i)(B^{m+1})\right)\\
			  =& \< Q^P, \J_\sigma X^{-1}\# (1\otimes\sigma_i)(B^{m+1})\>_{\phi}.
	\end{align*}
Hence
	\begin{align*}
		\< -\J_\sigma^*\circ \right.&\left.(1\otimes\sigma_i)(B^{m+2})  + B\# \J_\sigma^*\circ(1\otimes\sigma_i)(B^{m+1}),P\>_\varphi\\
			&= \< \J_\sigma X^{-1}\# (1\otimes\sigma_i)(B^{m+1}), Q^P\>_{\phi}= \phi( (1\otimes \sigma_{-i})( (B^^*)^{m+1}) \J_\sigma X^{-1} Q^P)\\
			&=\phi( (1\otimes\sigma_{-i})(\underbrace{ \J_\sigma X^{-1} (\J_\sigma f)^* \cdots \J_\sigma X^{-1} (\J_\sigma f)^*}_{m+1} \J_\sigma X^{-1} ) Q^P)\\
			&=\phi( (1\otimes\sigma_{-i})(\underbrace{ \J_\sigma X^{-1} (\sigma_i\otimes 1)(\J_\sigma f) \cdots \J_\sigma X^{-1} (\sigma_i\otimes 1)(\J_\sigma f)}_{m+1} \J_\sigma X^{-1} ) Q^P)\\
			&=\phi( \J_\sigma X^{-1} \hat{\sigma}_i(B^{m+1}) Q^P)=\phi(Q^P \J_\sigma X^{-1} B^{m+1}).
	\end{align*}
We break from the present computation to consider the terms on the other side of the desired equality.\par
For each $u=1,\ldots, m+2$ let $R_u$ be the matrix such that $[R_u]_{i_uj_u}=a_u\otimes b_u$ for some $i_u, j_u\in\{1,\ldots, N\}$ and all other entries are zero. Then
	\begin{align*}
		\text{Tr}_{A^{-1}}( R_1\cdots R_{m+2})=\text{Tr}(A^{-1}R_1\cdots R_{m+2})= [A^{-1}]_{j_{m+2}i_1}\prod_{u=1}^{m+2}\delta_{j_u=i_{u+1}} a_1\cdots a_{m+2}\otimes b_{m+2}\cdots b_1.
	\end{align*}
Denote $C=[A^{-1}]_{j_{m+2}i_1}\prod_{u=1}^{m+2}\delta_{j_u=i_{u+1}}$. Then
	\begin{align*}
		\sum_{k}\varphi &\left(\bar{D}_k(\varphi\otimes 1)\text{Tr}_{A^{-1}}(R_1\cdots R_{m+2}) P_k\right)= \sum_{k}C \varphi(a_1\cdots a_{m+2}) \varphi( \bar{\D}_k(b_{m+2}\cdots b_1) P_k)\\
		&=\sum_{k,u} C \varphi(\sigma_i(a_u\cdots a_{m+2})a_1\cdots a_{u-1}) \varphi( b_{u-1}\cdots b_1\sigma_i(b_{m+2}\cdots b_{u+1})\cdot \hat{\sigma}_i\circ\partial_k(b_u)\# P_k)\\
		&=\sum_{u} \varphi\otimes\varphi^{op}\otimes\text{Tr} ( \Delta_{(1,P)}(R_u) (\sigma_i\otimes\sigma_i)(R_{u+1}\cdots R_{m+2})A^{-1}R_1\cdots R_{u-1}),
	\end{align*}
where for an arbitrary matrix $O$
	\begin{align*}
		[\Delta_{(1,P)}(O)]_{ij}=\sum_k \sigma_{i}\otimes(\hat{\sigma}_i\circ\partial_k)([O]_{ij})\#_2 P_k.
	\end{align*}
Using linearity, replace $R_u$ with $B$ for each $u$. From Lemma \ref{change_of_variables}.(iv) we know $(\sigma_i\otimes\sigma_i)(\J_\sigma f) A^{-1}=A^{-1}\J_\sigma f$. As $[A,\J_\sigma X^{-1}]=0$, we also have $(\sigma_i\otimes\sigma_i)(B)A^{-1}=A^{-1}B$ and hence
	\begin{align*}
		\sum_{k}\varphi\left(\bar{D}_k(\varphi\otimes 1)\text{Tr}_{A^{-1}}(B^{m+2}) P_k\right)
		=(m+2)\phi(\Delta_{(1,P)}(B)A^{-1}B^{m+1}).
	\end{align*}
Observe that the left-hand side is $\<\D(\varphi\otimes 1)\text{Tr}_{A^{-1}}(B^{m+2}, P\>$. Indeed,
	\begin{align*}
		\<\D(\varphi\otimes 1)\text{Tr}_{A^{-1}}(B^{m+2}), P\> =\sum_k \varphi( \bar{\D}_k (\varphi\otimes 1)\text{Tr}_{A^{-1}}((B^*)^{m+2}) P_k),
	\end{align*}
and
	\begin{align*}
		(\varphi\otimes 1)\text{Tr}(A^{-1} (B^*)^{m+2})&=(\varphi\otimes1)\text{Tr}(A^{-1} \underbrace{\J_\sigma X^{-1} (\sigma_i\otimes 1)(\J_\sigma f) \cdots \J_\sigma X^{-1} (\sigma_i\otimes 1)(\J_\sigma f)}_{m+2})\\
			&=(\varphi\otimes 1)(\sigma_i\otimes 1)\text{Tr}(A^{-1} B^{m+2})=(\varphi\otimes 1)\text{Tr}(A^{-1} B^{m+2}),
	\end{align*}
where in the second to last equality we have used the fact that $A^{-1}$ and $\J_\sigma X^{-1}$ commute.\par
So 
	\begin{align*}
		\frac{1}{m+2}\<\D(\varphi\otimes 1)\text{Tr}_{A^{-1}}(B^{m+2}), P\> = \phi(\Delta_{(1,P)}(B)A^{-1}B^{m+1}),
	\end{align*}
and a similar computation yields
	\begin{align*}
		\frac{1}{m+2}\<\D(1\otimes \varphi)\text{Tr}_{A}(B^{m+2}), P\> = \phi(\Delta_{(2,P)}(B)A B^{m+1}),
	\end{align*}
where for an arbitrary matrix $O$
	\begin{align*}
		[\Delta_{(2,P)}(O)]_{ij}= \sum_k (\hat{\sigma}_i\circ\partial_k)\otimes \sigma_{-i} ([O]_{ij})\#_1 P_k.
	\end{align*}\par
Thus it suffices to show
	\begin{align*}
		\Delta_{(1,p)}(B)A^{-1}+\Delta_{(2,P)}(B)A = Q^P \J_\sigma X^{-1}.
	\end{align*}
This is easily verified entry-wise using the identities
	\begin{align*}
		(\delta_r\otimes 1)\circ\hat{\sigma}_i\circ\partial_k&= (\sigma\otimes (\hat{\sigma}\circ\partial_k))\circ\left(\sum_{b=1}^N [A^{-1}]_{br}\delta_b\right),\\
		(1\otimes \delta_r)\circ\hat{\sigma}_i\circ\partial_k&=((\hat{\sigma}_i\circ\partial_k)\otimes\sigma_{-i})\circ\left(\sum_{b=1}^N [A]_{br}\delta_b\right),
	\end{align*}
and the definitions of $Q^P$, $\Delta_{(1,P)}$, $\Delta_{(2,P)}$.
\end{proof}

\begin{lem}\label{Q}
Assume $f=\D g$ for $g=g^*\in \mathscr{P}_\varphi^{(R,\sigma)}$ and that $\|B\|_{R\otimes_\pi R}<1$. Let
	\begin{align*}
		Q(g)=\left[(1\otimes\varphi)\circ\text{Tr}_{A}+(\varphi\otimes 1)\circ\text{Tr}_{A^{-1}}\right](B-\log(1+B)).
	\end{align*}
Then
	\begin{equation*}
		\D Q(g)=B\# \J_\sigma^*\circ(1\otimes\sigma_i)\left(\frac{B}{1+B}\right) - \J_\sigma^*\circ(1\otimes\sigma_i)\left(\frac{B^2}{1+B}\right).
	\end{equation*}
\end{lem}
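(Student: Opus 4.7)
The plan is to reduce the identity to Lemma \ref{pictorial_lemma} by expanding the logarithm as a power series. Since $\|B\|_{R\otimes_\pi R} < 1$, the series
\begin{align*}
B - \log(1+B) = \sum_{m=0}^\infty \frac{(-1)^m}{m+2}\, B^{m+2}
\end{align*}
converges in the $R\otimes_\pi R$ norm, and hence, after applying the contractions $(1\otimes\varphi)\circ\text{Tr}_A$ and $(\varphi\otimes 1)\circ\text{Tr}_{A^{-1}}$ (which map $M_N(\mathscr{P}\otimes\mathscr{P}^{op})$ into $\mathscr{P}^{(R)}$ with controlled norm), it converges in $\mathscr{P}^{(R)}$. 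First I would verify that each truncated partial sum actually lies in $\mathscr{P}_{c.s.}^{(R,\sigma)}$, so that $\D$ may be applied to it. This comes from the identities $A^s\#\J_\sigma\D g\# A^{-s} = (\sigma_{-is}\otimes\sigma_{-is})(\J_\sigma\D g)$ of Lemma \ref{change_of_variables}(iv), which force the traces involving $A^{\pm 1}$ to be centralizer-valued, together with the cyclic symmetry coming from the trace property.

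Next I would apply Lemma \ref{pictorial_lemma} to each power $B^{m+2}$, $m\geq 0$: it gives
\begin{align*}
\frac{1}{m+2}\D\bigl[(\varphi\otimes 1)\circ\text{Tr}_{A^{-1}} + (1\otimes\varphi)\circ\text{Tr}_A\bigr](B^{m+2})
= -\J_\sigma^*\circ(1\otimes\sigma_i)(B^{m+2}) + B\#\J_\sigma^*\circ(1\otimes\sigma_i)(B^{m+1}).
\end{align*}
Multiplying by $(-1)^m$ and summing over $m\geq 0$ yields, after recognizing the geometric series
\begin{align*}
\sum_{m=0}^\infty (-1)^m B^{m+2} = \frac{B^2}{1+B},\qquad \sum_{m=0}^\infty (-1)^m B^{m+1} = \frac{B}{1+B},
\end{align*}
exactly the right-hand side of the lemma. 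The $\J_\sigma^*$ pieces are controlled because $(1\otimes\sigma_i)$ preserves the appropriate norms on $M_N(\mathscr{P}\otimes\mathscr{P}^{op})$ and $B/(1+B)$, $B^2/(1+B)$ converge in $R\otimes_\pi R$-norm; the assumed boundedness of $\J_\sigma^*$ on such elements transfers the convergence.

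The main obstacle will be justifying that $\D$ commutes with the infinite sum on the left. By Lemma \ref{cyclic_derivative_bounded}, $\D$ extends to a bounded operator $\mathscr{P}_{c.s.}^{(R,\sigma)} \to (\mathscr{P}^{(S)})^N$ for any $1<S<R$, so continuity of $\D$ together with convergence of the partial sums of $Q(g)$ in $\|\cdot\|_{R,\sigma}$ reduces the identity to the finite-sum version, which is a direct consequence of Lemma \ref{pictorial_lemma}. So the real technical point is the $\|\cdot\|_{R,\sigma}$-convergence of the partial sums; this reduces in turn to the geometric bound $\|B^{m+2}\|_{R\otimes_\pi R}\leq \|B\|_{R\otimes_\pi R}^{m+2}$, combined with the fact that the traces $(\varphi\otimes 1)\circ\text{Tr}_{A^{-1}}$ and $(1\otimes\varphi)\circ\text{Tr}_A$ land in $\mathscr{P}_\varphi^{(R)}$ and hence in $\mathscr{P}^{(R,\sigma)}$ with norm bounded by $\|A\|^{\deg-1}$ times the $R$-norm (as noted in Subsection \ref{setup}). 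Once convergence is established, the termwise application of Lemma \ref{pictorial_lemma} finishes the proof.
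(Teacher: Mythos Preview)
Your approach is exactly the paper's: the authors' proof is the single sentence ``Using the previous lemma this follows from comparing the convergent power series of each side,'' and you have spelled out precisely that comparison. One small correction: the partial sums land in $\mathscr{P}_\varphi^{(R,\sigma)}$ (by Lemma~\ref{centralizer_commutes_with_A}), not in $\mathscr{P}_{c.s.}^{(R,\sigma)}$, so to invoke the boundedness of $\D$ from Lemma~\ref{cyclic_derivative_bounded} you should route through $\D=\D\mathscr{S}\Pi$ on $\mathscr{P}_\varphi^{(R,\sigma)}$ (Lemma~\ref{D_of_S}) first; otherwise your outline is complete.
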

\begin{proof}
Using the previous lemma this follows from comparing the convergent power series of each side.
\end{proof}

\begin{lem}\label{K_2}
Let
	\begin{equation*}
		K(f)=-\J_\sigma^*\circ(1\otimes\sigma_i)(B)-f.
	\end{equation*}
Assume that $f=\D g$ for $g=g^*\in \mathscr{P}_\varphi^{(R,\sigma)}$. Then
	\begin{align*}
		K(f)=\D\left\{ \left[ (\varphi\otimes 1)\circ\text{Tr}_{A^{-1}} + (1\otimes\varphi)\circ \text{Tr}_A\right](B) - \mathscr{N} g\right\}.
	\end{align*}
\end{lem}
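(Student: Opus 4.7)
The plan is to apply Lemma \ref{pictorial_lemma} in the special case $m=-1$, which delivers most of the identity directly. Setting $m=-1$ there yields
\[
-\J_\sigma^*\circ(1\otimes\sigma_i)(B) + B\#\J_\sigma^*\circ(1\otimes\sigma_i)(1) = \D\left[(\varphi\otimes 1)\circ\text{Tr}_{A^{-1}} + (1\otimes\varphi)\circ\text{Tr}_A\right](B),
\]
where $1\in M_N(\mathscr{P}\otimes\mathscr{P}^{op})$ is the identity matrix. Since $(1\otimes\sigma_i)(1)=1$ and $\J_\sigma^*(1)=X$ by Proposition \ref{adjoint_of_q-derivations}, the second term on the left collapses to $B\#X$. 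Subtracting $f$ from both sides then reduces the lemma to the single algebraic identity
\[
B\#X + f = \D(\mathscr{N} g).
\]

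To handle $B\#X$ I would first simplify $B=\J_\sigma f\#\J_\sigma X^{-1}$. Using $\partial_k=\sum_\ell \alpha_{\ell k}\delta_\ell$ with $\alpha_{\ell k}=[\tfrac{2}{1+A}]_{\ell k}$ together with $[\J_\sigma X^{-1}]_{kj}=[\tfrac{1+A}{2}]_{kj}\,1\otimes 1$, the matrices $\tfrac{2}{1+A}$ and $\tfrac{1+A}{2}$ telescope to the identity and one finds $[B]_{ij}=\delta_j f_i$, i.e.\ $B=\J f$. A direct computation on a monomial $P=X_{k_1}\cdots X_{k_n}$ gives the elementary identity $\sum_j\delta_j P\#X_j=\mathscr{N} P$, whence $B\#X=\mathscr{N} f$.

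The remaining piece is an Euler-type relation between $\D$ and $\mathscr{N}$. For $g\in\pi_n(\mathscr{P}_\varphi)$ the cyclic gradient $\D g$ is homogeneous of degree $n-1$, so $\mathscr{N}(\D g)=(n-1)\D g$, while $\D(\mathscr{N} g)=\D(ng)=n\D g$; hence $\mathscr{N}(\D g)+\D g=\D(\mathscr{N} g)$, and the general case follows by linearity. Combining with $B\#X=\mathscr{N} f=\mathscr{N}(\D g)$ yields $B\#X+f=\D(\mathscr{N} g)$, and substitution into the consequence of Lemma \ref{pictorial_lemma} proves the claim.

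There is no serious obstacle once Lemma \ref{pictorial_lemma} is in hand. The two points worth flagging are the telescoping cancellation $\J_\sigma f\#\J_\sigma X^{-1}=\J f$ (which turns the $\sigma$-twisted free-difference-quotient matrix into the standard one, so that pairing with $X$ reproduces the number operator) and the Euler identity $\D(\mathscr{N} g)=\mathscr{N}(\D g)+\D g$ (which is what accounts for the extra $-\mathscr{N} g$ term in the target formula).
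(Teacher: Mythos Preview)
Your proof is correct and follows essentially the same route as the paper: apply Lemma \ref{pictorial_lemma} at $m=-1$, use $\J_\sigma^*(1)=X$ to reduce $B\#\J_\sigma^*\circ(1\otimes\sigma_i)(1)$ to $B\#X=\J f\#X=\mathscr{N} f$, and then invoke the Euler identity $\D\mathscr{N} g=(\mathscr{N}+1)\D g$. You spell out the telescoping $\J_\sigma f\#\J_\sigma X^{-1}=\J f$ and the Euler relation in more detail than the paper does, but the argument is the same.
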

\begin{proof}
When $m=-1$, the equality in Lemma \ref{pictorial_lemma} becomes
	\begin{equation*}
		\D\left[ (\varphi\otimes 1)\circ\text{Tr}_{A^{-1}} + (1\otimes\varphi)\circ \text{Tr}_A\right](B) = -\J_\sigma^*\circ(1\otimes\sigma_i)(B)+B\#\J_\sigma^*\circ(1\otimes \sigma_i)(1).
	\end{equation*}
Since $X=\J_\sigma^*(1)=\J_\sigma^*\circ(1\otimes \sigma_i)(1)$, the last term becomes $B\# X= \J f\# X=\mathscr{N} f$. Since $\D\mathscr{N} g=(\mathscr{N}+1)\D g=\mathscr{N} f+  f$, we have
	\begin{align*}
		\D\left\{ \left[ (\varphi\otimes 1)\circ\text{Tr}_{A^{-1}} + (1\otimes\varphi)\circ \text{Tr}_A\right](B) - \mathscr{N} g\right\}=-\J_\sigma^*\circ(1\otimes\sigma_i)(B) +\mathscr{N} f- \D\mathscr{N} g=K(f),
	\end{align*}
as claimed.
\end{proof}

\begin{lem}\label{S-D_Y_2=3}
Assume $f=\D g$ for $g=g^*\in \mathscr{P}_\varphi^{(R,\sigma)}$ and $\|\J\D g\|_{R\otimes_\pi R}<1$. Let $Q(g)$ be as before. Then Equation (\ref{S-D_Y_2}) is equivalent to
	\begin{align}\label{S-D_Y_3}
		\D\left\{ \left[ (\varphi\otimes 1)\circ\text{Tr}_{A^{-1}} +\right.\right.&\left.\left. (1\otimes\varphi)\circ \text{Tr}_A\right](\J\D g) - \mathscr{N} g\right\} \notag\\
											&= \D(W(X+\D g))+\D Q(g)+ \J_\sigma\D g\#(\J_\sigma X)^{-1}\#\D g.
	\end{align}
\end{lem}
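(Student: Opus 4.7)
The plan is to chain together Lemmas \ref{K_1}, \ref{Q}, and \ref{K_2}, which have already done the heavy lifting, so that essentially all that remains is a bookkeeping identification of the matrix $B$ with $\J\D g$.

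First I would verify that the hypotheses needed for these previous lemmas are satisfied. The assumption $\|\J\D g\|_{R\otimes_\pi R}<1$ will be leveraged via the key observation that $B=\J\D g$ as matrices in $M_N(\mathscr{P}\otimes\mathscr{P}^{op})$. Indeed, $[\J_\sigma X]_{ij}=\alpha_{ij}1\otimes 1=[\tfrac{2}{1+A}]_{ij}1\otimes 1$, so $[(\J_\sigma X)^{-1}]_{kj}=[\tfrac{1+A}{2}]_{kj}1\otimes 1$, and expanding $\partial_k=\sum_l\alpha_{lk}\delta_l$ gives
\begin{align*}
[B]_{ij}=\sum_k\partial_k f_i\cdot\left[\tfrac{1+A}{2}\right]_{kj}=\sum_l\left(\sum_k\left[\tfrac{2}{1+A}\right]_{lk}\left[\tfrac{1+A}{2}\right]_{kj}\right)\delta_l f_i=\delta_jf_i=[\J f]_{ij},
\end{align*}
using $\tfrac{2}{1+A}\cdot\tfrac{1+A}{2}=I$. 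Hence $\|B\|_{R\otimes_\pi R}=\|\J\D g\|_{R\otimes_\pi R}<1$, which both ensures $\xi\mapsto(1+B)\#\xi$ is invertible on $(\mathscr{P}^{(R)})^N$ (via a Neumann series argument) and guarantees absolute convergence of the series for $\tfrac{B}{1+B}$, $\tfrac{B^2}{1+B}$, and $\log(1+B)$; thus the hypotheses of Lemmas \ref{K_1} and \ref{Q} are satisfied.

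Next I would apply Lemma \ref{K_1} to rewrite (\ref{S-D_Y_2}) as
\begin{align*}
K(f)=\D(W(X+f))+B\#f+B\#\J_\sigma^*\circ(1\otimes\sigma_i)\!\left(\tfrac{B}{1+B}\right)-\J_\sigma^*\circ(1\otimes\sigma_i)\!\left(\tfrac{B^2}{1+B}\right).
\end{align*}
By Lemma \ref{Q}, the bracket on the right collapses to $\D Q(g)$. By Lemma \ref{K_2}, the left-hand side $K(f)$ equals $\D\{[(\varphi\otimes 1)\circ\text{Tr}_{A^{-1}}+(1\otimes\varphi)\circ\text{Tr}_A](B)-\mathscr{N}g\}$. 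Substituting these into the displayed equation and using $f=\D g$ yields
\begin{align*}
\D\!\left\{[(\varphi\otimes 1)\circ\text{Tr}_{A^{-1}}+(1\otimes\varphi)\circ\text{Tr}_A](B)-\mathscr{N}g\right\}=\D(W(X+\D g))+\D Q(g)+B\#\D g.
\end{align*}

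Finally, I would use the identity $B=\J\D g$ established above to rewrite the trace term as $[(\varphi\otimes 1)\circ\text{Tr}_{A^{-1}}+(1\otimes\varphi)\circ\text{Tr}_A](\J\D g)$, and unpack $B\#f=\J_\sigma\D g\#(\J_\sigma X)^{-1}\#\D g$ to match the form stated in (\ref{S-D_Y_3}). Since every substitution used is either an equivalence from a previous lemma or a direct algebraic identity, the resulting equation is logically equivalent to (\ref{S-D_Y_2}). The main (modest) obstacle is simply the identification $B=\J\D g$, which is what permits the passage between the $\sigma$-twisted derivations that appear naturally on the analytic side and the untwisted derivations that appear in the final form of the fixed-point equation; once this is observed, the proof is essentially a direct substitution.
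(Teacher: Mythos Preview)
Your proof is correct and follows essentially the same route as the paper: chain Lemmas~\ref{K_1}, \ref{Q}, and \ref{K_2}, using the identification $B=\J\D g$ (which the paper states without the explicit computation you supply) to verify the invertibility hypothesis and to translate between the two forms. The only cosmetic difference is that the paper starts from the left-hand side of (\ref{S-D_Y_3}) and identifies it as $K(f)$ via Lemma~\ref{K_2} before invoking Lemma~\ref{K_1}, whereas you go in the reverse order; the content is identical.
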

\begin{proof}
By Lemma \ref{K_2}, the left-hand side is $K(f)$. Then using Lemmas \ref{K_1} and \ref{Q} we have
	\begin{align*}
		K(f)&=\D(W(X+f)) + B\#\J_\sigma^*\circ(1\otimes\sigma_i)\left(\frac{B}{1+B}\right)-\J_\sigma^*\circ(1\otimes\sigma_i)\left(\frac{B^2}{1+B}\right) + B\# f\\
		&=\D(W(X+\D g))+ \D Q(g) + \J_\sigma\D g\#(\J_\sigma X)^{-1}\# \D g.
	\end{align*}
Note that the hypothesis in Lemma 2.5 that the map $\xi\mapsto (1+B)\#\xi$ is invertible is satisfied since $\|B\|_{R\otimes_\pi R}=\|\J\D g\|_{R\otimes_\pi R}<1$.
\end{proof}

To prove the existence of a $g$ satisfying the equation above we use a fixed point argument and therefore require some preliminary estimates.

%	Technical Estimates
%%%%%%%%%%%%

\subsection{Technical estimates.}\label{technical_estimates}

Recall that $\|X_j\|\leq 2$ for each $j=1,\ldots, N$. Since $\varphi$ is a state it then follows that
	\begin{equation}\label{phi_monomial}	
		|\varphi(X_{i_1}\cdots X_{i_n})|\leq 2^n.
	\end{equation}
	
\begin{lem}\label{centralizer_commutes_with_A}
For $g_1,\ldots, g_m\in \mathscr{P}_{\varphi}$
	\begin{align*}
		\left[(1\otimes\varphi)\circ\text{Tr}_A+(\varphi\otimes1)\circ\text{Tr}_{A^{-1}}\right]( \J\D g_1\#\cdots\# \J\D g_m)\in \mathscr{P}_\varphi.
	\end{align*}
\end{lem}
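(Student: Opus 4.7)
The plan is to deduce the statement from the modular transformation law
$(\sigma_{-is}\otimes\sigma_{-is})(\J_\sigma\D g)=A^s\#\J_\sigma\D g\# A^{-s}$ of Lemma~\ref{change_of_variables}(iv), after promoting it first from $\J_\sigma$ to $\J$ and then from a single factor to the matrix product $\J\D g_1\#\cdots\#\J\D g_m$.

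First I would observe that $\partial_j=\sum_k\alpha_{kj}\delta_k$ yields, entry-wise, $\J_\sigma\D g=\J\D g\cdot\frac{2}{1+A}$, and hence $\J\D g=\J_\sigma\D g\cdot\frac{1+A}{2}$. Since $\frac{1+A}{2}$ is a scalar matrix (hence fixed by $\sigma_{-is}\otimes\sigma_{-is}$) that commutes with every power of $A$, Lemma~\ref{change_of_variables}(iv) upgrades at once to
\[
(\sigma_{-is}\otimes\sigma_{-is})(\J\D g)=A^s\#\J\D g\# A^{-s},\qquad s\in\R.
\]
Because $\sigma_{-is}\otimes\sigma_{-is}$ is a unital algebra homomorphism respecting $\#$, the identity telescopes: letting $Q:=\J\D g_1\#\cdots\#\J\D g_m$, I get $(\sigma_{-is}\otimes\sigma_{-is})(Q)=A^s\# Q\# A^{-s}$. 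Specializing to $s=-1$ gives $(\sigma_i\otimes\sigma_i)(Q)=A^{-1}\# Q\# A$.

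Next I would use that the matrix trace is cyclic with respect to scalar matrices (since the entries of $A,A^{-1}$ lie in $\C\cdot 1\otimes 1$) to compute
\begin{align*}
(\sigma_i\otimes\sigma_i)(\text{Tr}_A(Q))&=\text{Tr}(A\#A^{-1}\# Q\# A)=\text{Tr}(A\# Q)=\text{Tr}_A(Q),\\
(\sigma_i\otimes\sigma_i)(\text{Tr}_{A^{-1}}(Q))&=\text{Tr}(A^{-1}\#A^{-1}\# Q\# A)=\text{Tr}(A^{-1}\# Q)=\text{Tr}_{A^{-1}}(Q).
\end{align*}
Because $\varphi\circ\sigma_i=\varphi$ (a consequence of the KMS identity $\varphi(ab)=\varphi(\sigma_i(b)a)$ derived earlier in the preliminaries), the slicings intertwine with the diagonal modular action: $(1\otimes\varphi)\circ(\sigma_i\otimes\sigma_i)=\sigma_i\circ(1\otimes\varphi)$, and likewise on the left. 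Applying this to the equalities above shows that both $(1\otimes\varphi)\text{Tr}_A(Q)$ and $(\varphi\otimes 1)\text{Tr}_{A^{-1}}(Q)$ are $\sigma_i$-fixed, hence lie in $\mathscr{P}_\varphi$, so their sum does too.

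I do not anticipate a real obstacle: the proof is essentially bookkeeping once Lemma~\ref{change_of_variables}(iv) has been invoked. The only subtle points are that $A$ and $A^{-1}$ are scalar matrices, so they may be freely cycled inside $\text{Tr}$, and that the diagonal modular automorphism commutes past a partial $\varphi$-slicing thanks to $\varphi\circ\sigma_i=\varphi$.
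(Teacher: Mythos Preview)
Your proof is correct and follows essentially the same approach as the paper's: both arguments rest on the conjugation identity $(\sigma_{it}\otimes\sigma_{it})(\J\D g)=A^{-t}\#\J\D g\# A^{t}$ (which the paper simply quotes as a consequence of Lemma~\ref{change_of_variables}(iv), while you supply the extra line $\J\D g=\J_\sigma\D g\cdot\tfrac{1+A}{2}$ to justify it), the cyclicity of $\text{Tr}$ with respect to the scalar matrices $A^{\pm 1}$, and the invariance $\varphi\circ\sigma_i=\varphi$. The only cosmetic difference is that you first show $\text{Tr}_{A^{\pm 1}}(Q)$ is $(\sigma_i\otimes\sigma_i)$-fixed and then slice, whereas the paper slices first and reads off $\sigma_{\mp i}$-invariance of the result; the content is the same.
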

\begin{proof}
Recall $A^{-t}\# \J\D g\# A^t=(\sigma_{it}\otimes \sigma_{it})(\J\D g)$ for $g\in\mathscr{P}_{\varphi}^{(R,\sigma)}$ by Lemma \ref{change_of_variables}.(iv). Given this identity, for $g_1,\ldots, g_m\in \mathscr{P}_\varphi$ we have
	\begin{align*}
		(1\otimes \varphi)\circ\text{Tr}(A\# \J\D g_1\#\cdots \# \J\D g_m)&=(1\otimes\varphi)\circ\text{Tr}\left( (\sigma_{-i}\otimes\sigma_{-i})(\J\D g_1\#\cdots\# \J\D g_m)\# A\right)\\
												&=(1\otimes\varphi)\circ\text{Tr}\left( A\#(\sigma_{-i}\otimes\sigma_{-i})(\J\D g_1\#\cdots\# \J\D g_m)\right)\\
												&=\sigma_{-i}\circ(1\otimes\varphi)\circ\text{Tr}(A\# \J\D g_1\#\cdots\# \J\D g_m),
	\end{align*}
implying $(1\otimes \varphi)\circ\text{Tr}_A(\J\D g_1\#\cdots \# \J\D g_m)\in\mathscr{P}_\varphi$. Similarly
	\begin{align*}
		(\varphi\otimes 1)\circ\text{Tr}(A^{-1}\# \J\D g_1\#\cdots \# \J\D g_m)=\sigma_{i}\circ (\varphi\otimes 1)\circ\text{Tr}(A^{-1}\# \J\D g_1\#\cdots \# \J\D g_m),
	\end{align*}
implying $(\varphi\otimes 1)\circ\text{Tr}_{A^{-1}} (\J\D g_1\#\cdots \# \J\D g_m)\in\mathscr{P}_\varphi$.
\end{proof}

Using Equation (\ref{cyclic_derivative_of_cyclically_symmetric}) we see that for $g\in\pi_n\left(\mathscr{P}_{c.s.}^{(R,\sigma)}\right)$
	\begin{align*}
	\begin{tikzpicture}[baseline]
	\node[left] at (0,0) {$\displaystyle\J\D\Sigma g=\sum_{j=1}^N \sum_{l=1}^{n-1}\sum_{|\ul{i}|=n} c(\ul{i}) \alpha_{ji_n}$};
	\draw[thick] (0,-.5) rectangle (2.5,.5);
	\node at (1.25,.25) {$i_1\cdots i_{l-1}$};
	\node at (1.25,-.25) {$i_{n-1}\cdots i_{l+1}$};
	\node[right] at (0,0) {$j$};
	\node[left] at (2.5,0) {$i_l$};
	\node[right] at (2.5,-.5) {.};
	\end{tikzpicture}
	\end{align*}

\begin{lem}
Let $g_1,\ldots, g_m\in \Pi\left(\mathscr{P}_{c.s.}\right)$. Set
	\begin{equation*}
		Q_m(g_1,\ldots,g_m)=\left[ (1\otimes\varphi)\circ\text{Tr}_A +(\varphi\otimes 1)\circ\text{Tr}_{A^{-1}}\right]\left( \J\D g_1\cdots \J\D g_m\right).
	\end{equation*}
Assume $R\geq4$, so that $\frac{2}{R}\leq\frac{1}{2}$. Then
	\begin{align*}
		\| Q_m(\Sigma g_1,\ldots, \Sigma g_m)\|_{R,\sigma} \leq \|A\|\frac{2^{m+1}}{R^{2m}} \prod_{u=1}^m \|g_u\|_{R,\sigma}.
	\end{align*}
In particular, $Q_m$ extends to a bounded multilinear operator on $\mathscr{P}^{(R,\sigma)}_{c.s.}$ with values in $\mathscr{P}^{(R,\sigma)}_{\varphi}$.
\end{lem}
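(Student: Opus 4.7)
The plan is to prove the estimate by reducing to the homogeneous case and expanding $Q_m(\Sigma g_1,\ldots,\Sigma g_m)$ term by term. First, by the multilinearity of $Q_m$ together with the decomposition $\|\cdot\|_{R,\sigma}=\sum_n\|\pi_n(\cdot)\|_{R,\sigma}$, I would reduce to the case where each $g_u\in\pi_{n_u}(\mathscr{P}_{c.s.})$ with $n_u\geq 2$, writing $g_u=\sum_{|\ul{i}|=n_u}c_u(\ul{i})X_{\ul{i}}$. The diagrammatic formula for $\J\D\Sigma g_u$ displayed just before the statement then lets me expand $\J\D\Sigma g_1\#\cdots\#\J\D\Sigma g_m$ as an explicit sum over cut positions $l_u\in\{1,\ldots,n_u-1\}$ and tuples $\ul{i}^{(u)}$, with the column index of the $u$-th factor determined by $i_{l_u}^{(u)}$.

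Next I would apply $\mathrm{Tr}_A$ and $\mathrm{Tr}_{A^{-1}}$ followed by $(1\otimes\varphi)$ and $(\varphi\otimes 1)$ respectively. Each summand becomes a monomial in $X$ of degree $\sum_u(l_u-1)$ with coefficient bounded as follows: the factors $|\alpha_{jk}|\leq 1$ are discarded; the bound $|\varphi(X_{\ul{k}})|\leq 2^{|\ul{k}|}$ from (\ref{phi_monomial}) contributes $2^{\sum_u(n_u-1-l_u)}$; and the single remaining weight $[A^{\pm 1}]_{j_m j_0}$, summed over the loop index $j_0$, yields at most $\|A\|$ by property 4 of $A$ (noting $A^{-1}=A^T$ has the same row-sum bound). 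The $R$-norm of the resulting monomial is $R^{\sum_u(l_u-1)}$, and for each $u$ the geometric sum $\sum_{l=1}^{n_u-1}2^{n_u-1-l}R^{l-1}\leq 2R^{n_u-2}$ holds since $R\geq 4$ implies $2/R\leq 1/2$. Combining these factors, each of the $\mathrm{Tr}_A$ and $\mathrm{Tr}_{A^{-1}}$ contributions is bounded in $\|\cdot\|_R$-norm by $\|A\|\cdot\frac{2^m}{R^{2m}}\prod_u\|g_u\|_{R,\sigma}$ (using $\|g_u\|_R=\|g_u\|_{R,\sigma}$ for cyclically symmetric homogeneous $g_u$), and their sum gives the claimed constant $2^{m+1}$.

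To upgrade from the $\|\cdot\|_R$-bound to the $\|\cdot\|_{R,\sigma}$-bound, I would invoke Lemma \ref{centralizer_commutes_with_A} to conclude $Q_m\in\mathscr{P}_\varphi$, so that $\|Q_m\|_{R,\sigma}=\sum_d\max_k\|\rho^k\pi_d Q_m\|_R$. The cyclic structure of $\mathrm{Tr}_{A^{\pm 1}}$ combined with the $\sigma$-cyclic symmetry of each $g_u$ (via identities (\ref{cyclically_symmetric_coefficients_positive}) and (\ref{cyclically_symmetric_coefficients_negative})) allows me to rewrite each $\rho^k\pi_d Q_m$ as a similar trace expression with the cut positions $l_u$ cyclically shifted inside each $g_u$; the term-by-term estimate then applies uniformly in $k$ and yields the same bound on $\|\rho^k\pi_d Q_m\|_R$. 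The general case $g_u\in\Pi(\mathscr{P}_{c.s.})$ follows by linear summation.

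The main obstacle is the last step: tracking precisely how the $\sigma_{-i}$ factors produced by $\rho^k$ interact with the $A^{\pm 1}$-weight in the trace. Their interplay is governed by Lemma \ref{change_of_variables}(iv), which gives $A^s\#\J\D g\#A^{-s}=(\sigma_{-is}\otimes\sigma_{-is})(\J\D g)$ for $g\in\mathscr{P}_{c.s.}^{(R,\sigma)}$; this is exactly the identity needed to absorb the $\sigma$-shifts coming from $\rho^k$ into a relabeling of the summation variables without spoiling the shape of the term-by-term estimate. The remaining steps are combinatorial bookkeeping.
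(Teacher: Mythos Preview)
Your proposal is correct and follows essentially the same route as the paper: reduce to homogeneous $g_u$, expand $\J\D\Sigma g_u$ via the diagrammatic formula, bound the $k=0$ term by the geometric series in $2/R$, and then handle $\rho^k$ by combining Lemma~\ref{centralizer_commutes_with_A} (to shift the $A^{\pm 1}$-weight around the trace) with the coefficient identities (\ref{cyclically_symmetric_coefficients_positive})--(\ref{cyclically_symmetric_coefficients_negative}) to absorb the $\sigma_i$-shifts into a relabeling of the indices of a single $g_v$. The paper carries out exactly this bookkeeping explicitly for $(\varphi\otimes 1)\circ\mathrm{Tr}_{A^{-1}}$ and states the $(1\otimes\varphi)\circ\mathrm{Tr}_A$ case is analogous; your sketch describes the latter, which is why your monomial degree and $\varphi$-factor degree are swapped relative to the paper's displayed computation, but the estimate is symmetric and this is harmless.
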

\begin{proof}
First, for each $u=1,\ldots, m$  assume $g_u\in \pi_{n_u}(\mathscr{P}_{c.s.})$ and write $g_u=\sum_{|\ul{i}^{(u)}|=n_u} c_u(\ul{i}^{(u)}) X_{\ul{i}^{(u)}}$. By the computation preceding the statement of the lemma we can see that
	\begin{align*}
		\text{Tr}_{A^{-1}}( \J\D\Sigma g_1\cdots \J\D\Sigma g_m)=&\sum_{p_0,\ldots,p_m=1}^N [A^{-1}]_{p_mp_0}[\J\D\Sigma g_1]_{p_0p_1}\cdots [\J\D\Sigma g_m]_{p_{m-1}p_m}\\
			=&\sum_{j=1}^N \sum_{l_1=1}^{n_1-1}\cdots \sum_{l_m=1}^{n_m-1} \sum_{|\ul{i}^{(1)}|=n_1}\cdots \sum_{|\ul{i}^{(m)}|=n_m}\prod_{u=1}^m c_u(\ul{i}^{(u)}) \alpha_{i_{l_{u-1}}^{(u-1)}i_{n_u}^{(u)}}\\
			&\times
				\begin{tikzpicture}[baseline]
				\draw[thick] (0,-.65) rectangle (3.4,.65);
				\node at (1.7,.3) {$i_1^{(1)}\cdots i_{l_1-1}^{(1)}$};
				\node at (1.7,-.3) {$i_{n_1-1}^{(1)}\cdots i_{l_1+1}^{(1)}$};
				\node[right] at (0,0) {$i_{l_0}^{(0)}$};
				\node[left] at (3.4,0) {$i_{l_1}^{(1)}$};
				\node at (3.75,0) {$\cdots$};
				\draw[thick] (4,-.65) rectangle (8,.65);
				\node at (6.15,.3) {$i_1^{(m)}\cdots i_{l_m-1}^{(m)}$};
				\node at (6.15,-.3) {$i_{n_m-1}^{(m)}\cdots i_{l_m+1}^{(m)}$};
				\node[right] at (4,0) {$i_{l_m-1}^{(m-1)}$};
				\node[left] at (8,0) {$i_{l_m}^{(m)}$};
				\node[right] at (8,0) {$[A^{-1}]_{i_{l_m}^{(m)} j}$};
				\node at (9.7,-.2) {,};
				\end{tikzpicture}
	\end{align*}
where $i_{l_0}^{(0)}=j$. Hence
	\begin{align*}
		(\varphi\otimes 1)\circ \text{Tr}_{A^{-1}}( \J\D\Sigma g_1\cdots \J\D\Sigma g_m)=&\sum_{j=1}^N \sum_{l_1,\ldots,l_m}\sum_{\ul{i}^{(1)},\ldots,\ul{i}^{(m)}} \prod_{u=1}^m c_u(\ul{i}^{(u)}) \alpha_{i_{l_{u-1}}^{(u-1)} i_{n_u}^{(u)}} \\
						&\times\varphi( X_{i_1^{(1)}}\cdots X_{i_{l_1-1}^{(1)}} \cdots X_{i_1^{(m)}}\cdots X_{i_{l_m-1}^{(m)}})\\
						&\times X_{i_{l_{m+1}}^{(m)}} \cdots X_{i_{n_m-1}^{(m)}} \cdots X_{i_{l_1+1}^{(1)}} \cdots X_{i_{n_1-1}^{(1)}}\cdot [A^{-1}]_{i_{l_m}^{(m)}j}.
	\end{align*}
Fix $l_1,\ldots, l_m$ in the above quantity, then the sum over $i_{l_0}^{(0)}$ and the multi-indices $\ul{i}^{(1)},\ldots,\ul{i}^{(m)}$ is a sum of monomials all with the same degree: $\sum_u n_u-l_u-1=:n_0$. By Lemma \ref{centralizer_commutes_with_A}, it suffices to bound $\|\rho^k(\cdot)\|_R$ for $k\in\{-n_0+1,\ldots, -1,0\}$. For $k=0$ we have
	\begin{align*}
		\left\| (\varphi\otimes 1)\circ\text{Tr}_{A^{-1}}\right.&\left.(\J\D\Sigma g_1\cdots \J\D\Sigma g_m)\right\|_R \\
				&\leq \sum_{j=1}^N \sum_{l_1,\ldots,l_m} \sum_{\ul{i}^{(1)},\ldots, \ul{i}^{(m)}} \prod_{u=1}^m \left|c_u\left(\ul{i}^{(u)}\right)\right| \left|[A^{-1}]_{i_m^{(m)} j}\right| R^{n_1-l_1-1+\cdots +n_m-l_m-1} 2^{l_1-1+\cdots +l_m-1}\\
				&\leq \sum_{l_1,\ldots,l_m} \sum_{\ul{i}^{(1)},\ldots, \ul{i}^{(m)}} \prod_{u=1}^m \left|c_u\left(\ul{i}^{(u)}\right)\right| \left\|A^{-1}\right\| R^{n_1+\cdots +n_m - 2m} \left(\frac{2}{R}\right)^{l_1+\cdots +l_m-m}\\
				&=\|A\| \prod_{u=1}^m \frac{1}{R^2}\| g_u\|_R \sum_{l_u=1}^{n_m-1} \left(\frac{2}{R}\right)^{l_u-1} \leq \|A\| \prod_{u=1}^m \frac{2}{R^2}\|g_u\|_R=\|A\| \prod_{u=1}^m \frac{2}{R^2}\|g_u\|_{R,\sigma},
	\end{align*}
where we have used $\|g_u\|_R=\|g_u\|_{R,\sigma}$.\par
Next, let $k\in\{ -n_0+1,\ldots, -1\}$ and suppose
	\begin{align*}
		\rho^k\left( X_{i_{l_{m}+1}^{(m)}} \cdots X_{i_{n_m-1}^{(m)}}\right. & \left.\cdots X_{i_{l_1+1}^{(1)}} \cdots X_{i_{n_1-1}^{(1)}}\right)\\
			&=X_{i_{a+1}^{(v)}}\cdots X_{i_{n_v-1}^{(v)}} \cdots X_{i_{l_1+1}^{(1)}} \cdots X_{i_{n_1-1}^{(1)}}\sigma_i\left( X_{i_{l_m+1}^{(m)}}\cdots X_{i_{n_m-1}^{(m)}} \cdots X_{i_{l_v+1}^{(v)}}\cdots X_{i_a^{(v)}}\right),
	\end{align*}
for some $v\in \{1,\ldots,m\}$ and some $a\in\{l_v+1,\ldots ,n_v-1\}$. The corresponding $\varphi$ output is
	\begin{align*}
		\varphi\left( X_{i_1^{(1)}}\cdots X_{i_{l_1-1}^{(1)}}\right.& \left.\cdots X_{i_1^{(m)}}\cdots X_{i_{l_m-1}^{(m)}}\right)\\
				&=\varphi\left( \sigma_i\left( X_{i_1^{(v)}}\cdots X_{i_{l_v -1}^{(v)}} \cdots X_{i_1^{(m)}}\cdots X_{i_{l_m-1}^{(m)}}\right) X_{i_1^{(1)}}\cdots X_{i_{l_1-1}^{(1)}} \cdots X_{i_1^{(v-1)}}\cdots X_{i_{l_{v-1}-1}^{(v-1)}}\right).
	\end{align*}
Using Lemma \ref{centralizer_commutes_with_A} we can in this case replace $\text{Tr}(A^{-1}\# \J\D\Sigma g_1\cdots \J\D\Sigma g_m)$ with
	\begin{equation*}
		\text{Tr}(\J\D\Sigma g_1 \cdots \J\D\Sigma g_v\#A^{-1}\#(\sigma_{-i}\otimes\sigma_{-i})(\J\D\Sigma g_{v+1}\cdots \J\D\Sigma g_m))
	\end{equation*}
so that output of $\rho^k$ changes to
	\begin{align*}
		X_{i_{a+1}^{(v)}}\cdots X_{i_{n_v-1}^{(v)}} \cdots X_{i_{l_1+1}^{(1)}} \cdots X_{i_{n_1-1}^{(1)}} X_{i_{l_m+1}^{(m)}}\cdots X_{i_{n_m-1}^{(m)}} \cdots \sigma_i\left(X_{i_{l_v+1}^{(v)}}\cdots X_{i_a^{(v)}}\right),
	\end{align*}
and the output of $\varphi$ changes to
	\begin{align*}
		\varphi\left( \sigma_i\left( X_{i_1^{(v)}}\cdots X_{i_{l_v -1}^{(v)}}\right) \cdots X_{i_1^{(m)}}\cdots X_{i_{l_m-1}^{(m)}} X_{i_1^{(1)}}\cdots X_{i_{l_1-1}^{(1)}} \cdots X_{i_1^{(v-1)}}\cdots X_{i_{l_{v-1}-1}^{(v-1)}}\right).
	\end{align*}
Hence it suffices to consider when $v=m$. In this case we further fix $\ul{i}^{(1)},\ldots, \ul{i}^{(m-1)}$ and denote $F_u:=X_{i_1^{(u)}}\cdots X_{i_{l_u-1}^{(u)}}$ and $G_u:=X_{i_{l_u+1}^{(u)}}\cdots X_{i_{n_u-1}^{(u)}}$. Consider
	\begin{align*}
		\sum_{j=1}^N \sum_{\ul{i}^{(m)}} & c_m\left(\ul{i}^{(m)}\right) \alpha_{i_{l_{m-1}}^{(m-1)} i_{n_m}^{(m)}}[A^{-1}]_{i_{l_m}^{(m)} j}\\
				&\times \varphi\left( \sigma_i\left( X_{i_1^{(m)}}\cdots X_{i_{l_m -1}^{(m)}}\right)  F_1\cdots F_{m-1}\right)   X_{i_{a+1}^{(m)}}\cdots X_{i_{n_m-1}^{(m)}}G_1\cdots G_{m-1} \sigma_i\left(X_{i_{l_m+1}^{(m)}}\cdots X_{i_a^{(m)}}\right)\\
		=&\sum_{j=1}^N \sum_{\ul{i}^{(m)}}\ \sum_{\hat{i}_1^{(m)},\ldots, \hat{i}_{l_m-1}^{(m)}=1}^N\  \sum_{\hat{i}_{l_m+1}^{(m)},\ldots, \hat{i}_a^{(m)}=1}^N c_m\left(\ul{i}^{(m)}\right) \alpha_{i_{l_{m-1}}^{(m-1)} i_{n_m}^{(m)}}\prod_{t\neq l_m}[A^{-1}]_{i_t^{(m)} \hat{i}_t^{(m)}} \cdot [A^{-1} ]_{i_{l_m}^{(m)} j}\\
				&\times \varphi\left( X_{\hat{i}_1^{(m)}}\cdots X_{\hat{i}_{l_m-1}^{(m)}}F_1\cdots F_{m-1}\right)  X_{i_{a+1}^{(m)}}\cdots X_{i_{n_m-1}^{(m)}}G_1\cdots G_{m-1} X_{\hat{i}_{l_m+1}^{(m)}}\cdots X_{\hat{i}_a^{(m)}}\\
		=&\sum_{\ul{i}^{(m)}} \sum_{|\ul{\hat{i}}^{(m)}|=a} c_m\left( \ul{i}^{(m)}\right) \alpha_{i_{l_{m-1}}^{(m-1)} i_{n_m}^{(m)}}\prod_{t=1}^a [A^{-1}]_{i_t^{(m)} \hat{i}_t^{(m)}} \varphi\left( X_{\hat{i}_1^{(m)}}\cdots X_{\hat{i}_{l_m-1}^{(m)}}F_1\cdots F_{m-1}\right) \\
				&\times X_{i_{a+1}^{(m)}}\cdots X_{i_{n_m-1}^{(m)}}G_1\cdots G_{m-1} X_{\hat{i}_{l_m+1}^{(m)}}\cdots X_{\hat{i}_a^{(m)}}\\
		=&\sum_{\ul{j}^{(m)}} c_m\left( \ul{j}^{(m)}\right) \alpha_{i_{l_{m-1}}^{(m-1)} j_{n_m-a}^{(m)}}\varphi\left( X_{j_{n_m-a+1}^{(m)}}\cdots X_{i_{n_m-a+l_m-1}^{(m)}} F_1\cdots F_{m-1}\right)\\
			&\times  X_{j_1^{(m)}} \cdots X_{j_{n_m-a-1}^{(m)}} G_1\cdots G_{m-1} X_{i_{n_m-a+l_m+1}^{(m)}} \cdots X_{j_{n_m}^{(m)}},
	\end{align*}
where in the final equality we have used the characterization of the coefficients of elements of $\mathscr{P}_{c.s.}$ given by (\ref{cyclically_symmetric_coefficients_negative}). We note that while the multi-index has changed to $\ul{j}^{(m)}$, there are still $l_m-1$ terms inside $\varphi$ and $n_m-l_m-1$ outside. Thus we have
	\begin{align*}
		\left\| \rho^k\circ(\varphi\otimes 1)\circ\right.&\left.\text{Tr}_{A^{-1}}(\J\D\Sigma g_1\cdots \J\D\Sigma g_m)\right\|_R \\
				&\leq \sum_{l_1,\ldots,l_m} \sum_{\ul{i}^{(1)},\ldots, \ul{i}^{(m)}} \prod_{u=1}^m \left| c_u\left(\ul{i}^{(u)}\right)\right| R^{n_1-l_1-1+\cdots +n_m-l_m-1}2^{l_1-1+\cdots +l_m-1}\\
				&=\sum_{l_1,\ldots,l_m} \sum_{\ul{i}^{(1)},\ldots, \ul{i}^{(m)}} \prod_{u=1}^m \left| c_u\left(\ul{i}^{(u)}\right)\right| R^{n_1+\cdots +n_m - 2m}\left(\frac{2}{R}\right)^{l_1-1+\cdots +l_m-1}\\
				&=\prod_{u=1}^m \frac{1}{R^2} \|g_u\|_{R} \sum_{l_u=1}^{n_u-1} \left(\frac{2}{R}\right)^{l_u-1}\leq \prod_{u=1}^m \frac{2}{R^2} \|g_u\|_{R,\sigma}\leq \|A\| \prod_{u=1}^m \frac{2}{R^2} \|g_u\|_{R,\sigma}.
	\end{align*}
Thus	
	\begin{align*}
		\left\| (\varphi\otimes 1)\circ\text{Tr}_{A^{-1}}(\J\D\Sigma g_1\cdots \J\D\Sigma g_m)\right\|_{R,\sigma}\leq \|A\| \frac{2^m}{R^{2m}}\prod_{u=1}^m \|g_u\|_{R,\sigma},
	\end{align*}
and similar estimates show
	\begin{align*}
		\left\| (\varphi\otimes 1)\circ\text{Tr}_{A}(\J\D\Sigma g_1\cdots \J\D\Sigma g_m)\right\|_{R,\sigma}\leq \|A\| \frac{2^m}{R^{2m}}\prod_{u=1}^m \|g_u\|_{R,\sigma}.
	\end{align*}\par
Now let $g_1,\ldots, g_m\in \mathscr{P}_{c.s.}$ be arbitrary. We note that $\pi_{n_u}(g_u)\in\mathscr{P}_{c.s.}$ for each $n_u\geq 0$ since $[\rho,\pi_{n_u}]=0$. Then since $Q_m$ is multi-linear we have
	\begin{align*}
		Q_m(\Sigma g_1,\ldots,\Sigma g_m)= \sum_{n_1,\ldots, n_m=0}^\infty Q_m\left(\Sigma \pi_{n_1}(g_1),\ldots, \Sigma \pi_{n_m}(g_m)\right),
	\end{align*}
and hence
	\begin{align*}
		\left\| Q_m(\Sigma g_1,\ldots,\Sigma g_m)\right\|_{R,\sigma} &\leq \sum_{n_1,\ldots, n_m} \|A\|\frac{2^{m+1}}{R^{2m}} \prod_{u=1}^m \|\pi_{n_u}(g_u)\|_{R,\sigma} \\
				&= \|A\| \frac{2^{m+1}}{R^{2m}} \prod_{u=1}^m \sum_{n_u=0}^\infty \|\pi_{n_u}(g_u)\|_{R,\sigma}=\|A\|\frac{2^{m+1}}{R^{2m}} \prod_{u=1}^m \|g_u\|_{R,\sigma}.
	\end{align*}
Thus $Q_m$ extends to a bounded multilinear operator on $\mathscr{P}^{(R,\sigma)}_{c.s.}$. That $Q_m$ takes values in $\mathscr{P}^{(R,\sigma)}_\varphi$ follows from Lemma \ref{centralizer_commutes_with_A}.
\end{proof}

\begin{lem}\label{Q_m}
For $f,g\in \mathscr{P}_{c.s.}^{(R,\sigma)}$ set $Q_m(\Sigma g)=Q_m(\Sigma g,\ldots, \Sigma g)$ and assume $R\geq4$. Then
	\begin{align*}
		\left\| Q_m(\Sigma g)- Q_m(\Sigma f)\right\|_{R,\sigma} \leq \|A\|\frac{2^{m+1}}{R^{2m}} \sum_{k=0}^{m-1} \|g\|_{R,\sigma}^k \|f\|_{R,\sigma}^{m-k-1} \|f-g\|_{R,\sigma}.
	\end{align*}
In particular, $\|Q_m(\Sigma g)\|_{R,\sigma}\leq \|A\| \frac{2^{m+1}}{R^{2m}} \|g\|_{R,\sigma}^m$.
\end{lem}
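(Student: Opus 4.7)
The plan is to deduce this lemma directly from the previous lemma (the one bounding $\|Q_m(\Sigma g_1,\ldots,\Sigma g_m)\|_{R,\sigma}$ for independent arguments) by a standard telescoping argument, using that $Q_m$ is multilinear in its $m$ arguments.

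First, I note that the ``in particular'' statement is immediate: it is simply the previous lemma applied with $g_1=\cdots=g_m=g$. So the only real content is the Lipschitz-type estimate.

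For that, I would telescope between $Q_m(\Sigma g,\ldots,\Sigma g)$ and $Q_m(\Sigma f,\ldots,\Sigma f)$ by changing one argument at a time. Explicitly, write
\begin{align*}
Q_m(\Sigma g)-Q_m(\Sigma f)=\sum_{k=0}^{m-1}\bigl[&Q_m(\underbrace{\Sigma g,\ldots,\Sigma g}_{k},\Sigma g,\underbrace{\Sigma f,\ldots,\Sigma f}_{m-k-1})\\
&-Q_m(\underbrace{\Sigma g,\ldots,\Sigma g}_{k},\Sigma f,\underbrace{\Sigma f,\ldots,\Sigma f}_{m-k-1})\bigr].
\end{align*}
Since $\Sigma$ is linear and $Q_m$ is multilinear in its arguments (it is defined via $\J\mathscr{D}$ applied entrywise and then a multilinear combination involving $A^{\pm 1}$ and $\varphi$), the $k$th summand equals
\[
Q_m(\underbrace{\Sigma g,\ldots,\Sigma g}_{k},\Sigma(g-f),\underbrace{\Sigma f,\ldots,\Sigma f}_{m-k-1}).
\]

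Then I apply the previous lemma's bound
\[
\|Q_m(\Sigma h_1,\ldots,\Sigma h_m)\|_{R,\sigma}\leq \|A\|\tfrac{2^{m+1}}{R^{2m}}\prod_{u=1}^{m}\|h_u\|_{R,\sigma}
\]
to each summand with $h_u=g$ for $u\le k$, $h_{k+1}=g-f$, and $h_u=f$ for $u\ge k+2$. Summing the $m$ resulting inequalities over $k=0,\ldots,m-1$ and using the triangle inequality yields
\[
\|Q_m(\Sigma g)-Q_m(\Sigma f)\|_{R,\sigma}\leq \|A\|\tfrac{2^{m+1}}{R^{2m}}\sum_{k=0}^{m-1}\|g\|_{R,\sigma}^{k}\|f\|_{R,\sigma}^{m-k-1}\|f-g\|_{R,\sigma},
\]
which is exactly the claim. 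There is no real obstacle here; the only thing to check carefully is that $Q_m$ is genuinely multilinear as a function of the arguments $\Sigma g_1,\ldots,\Sigma g_m$, which is evident from its formula as $[(1\otimes\varphi)\circ\operatorname{Tr}_A+(\varphi\otimes 1)\circ\operatorname{Tr}_{A^{-1}}](\J\mathscr{D}(\,\cdot\,)\#\cdots\#\J\mathscr{D}(\,\cdot\,))$, since $\J\mathscr{D}$ is linear and the matrix product $\#$ is multilinear in its factors.
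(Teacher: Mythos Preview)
Your proof is correct and follows essentially the same approach as the paper: a telescoping sum changing one argument at a time, multilinearity of $Q_m$ to write each summand as $Q_m$ evaluated at $\Sigma(g-f)$ in one slot, and then the bound from the preceding lemma on each term. The ``in particular'' statement is handled the same way (either as the previous lemma with all arguments equal, or equivalently by setting $f=0$ in the Lipschitz estimate).
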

\begin{proof}
Using a telescoping sum we have
	\begin{align*}
		\|Q_m(\Sigma f)-Q_m(\Sigma g)\|_{R,\sigma}&=\left\| \sum_{k=0}^{m-1} Q_m(\underbrace{\Sigma g,\ldots,\Sigma g}_k,\underbrace{\Sigma f,\ldots,\Sigma f}_{m-k})-Q_m(\underbrace{\Sigma g,\ldots,\Sigma g}_{k+1},\underbrace{\Sigma f,\ldots,\Sigma f}_{m-k-1})\right\|_{R,\sigma}\\
					&\leq \sum_{k=0}^{m-1} \| Q_m(\underbrace{\Sigma g,\ldots \Sigma g}_{k},\Sigma f-\Sigma g,\underbrace{\Sigma f,\ldots,\Sigma f}_{m-k-1})\|_{R,\sigma}\\
					&\leq  \|A\| \frac{2^{m+1}}{R^{2m}} \sum_{k=0}^{m-1}\|g\|_{R,\sigma}^k\|f\|_{R,\sigma}^{m-k-1}\|f-g\|_{R,\sigma}.
	\end{align*}
\end{proof}

\begin{lem}\label{Q_2}
Assume $R\geq4$. Let $g\in\mathscr{P}^{(R,\sigma)}_{c.s.}$ be such that $\|g\|_{R,\sigma}<\frac{R^2}{2}$, and set
	\begin{equation*}
		Q(\Sigma g)=\sum_{m\geq 0} \frac{(-1)^m}{m+2}Q_{m+2}(\Sigma g).
	\end{equation*}
Then this series converges in $\|\cdot\|_{R,\sigma}$. Moreover, in the sense of analytic functional calculus on $M_{N}(W^*(\mathscr{P}\otimes \mathscr{P}^{op},\varphi\otimes\varphi^{op}))$, we have the equality
	\begin{equation*}
		Q(\Sigma g)=\left[ (1\otimes\varphi)\circ\text{Tr}_{A}+(\varphi\otimes 1)\circ\text{Tr}_{A^{-1}} \right] \left\{ \J\D \Sigma g - \log(1+\J\D\Sigma g)\right\}.
	\end{equation*}
Furthermore, the function $Q$ satisfies the local Lipschitz condition on $\left\{g\in\mathscr{P}^{(R,\sigma)}_{c.s.}\colon \|g\|_{R,\sigma} < R^2/2\right\}$
	\begin{align*}
		\left\| Q(\Sigma g)-Q(\Sigma f)\right\|_{R,\sigma} \leq \| f-g\|_{R,\sigma} \frac{2\|A\|}{R^2} \left( \frac{1}{\left(1-\frac{2\|f\|_{R,\sigma}}{R^2}\right)\left(1-\frac{2\|g\|_{R,\sigma}}{R^2}\right)} - 1\right),
	\end{align*}
and the bound
	\begin{align*}
		\|Q(\Sigma g)\|_{R,\sigma} \leq \frac{ 4\|A\| \|g\|_{R,\sigma}^2}{R^4-2R^2 \|g\|_{R,\sigma}}.
	\end{align*}
\end{lem}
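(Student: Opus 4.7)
\smallskip
\noindent\textbf{Proof plan.}
The plan is to feed the multilinear bound of Lemma~\ref{Q_m} into the series defining $Q(\Sigma g)$, identify it term-by-term with the Taylor expansion of $x-\log(1+x)$, and then derive the Lipschitz estimate and the norm bound by careful bookkeeping on the coefficients.

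First I would establish absolute convergence. By Lemma~\ref{Q_m},
\[
\frac{1}{m+2}\|Q_{m+2}(\Sigma g)\|_{R,\sigma}\leq\frac{\|A\|}{m+2}\cdot\frac{2^{m+3}}{R^{2(m+2)}}\|g\|_{R,\sigma}^{m+2}=\frac{2\|A\|}{(m+2)R^2}\left(\frac{2\|g\|_{R,\sigma}}{R^2}\right)^{m+2}\!,
\]
and the assumption $\|g\|_{R,\sigma}<R^2/2$ ensures the ratio is strictly less than $1$, giving convergence in $\mathscr{P}^{(R,\sigma)}_{c.s.}$. To identify the sum with the functional-calculus expression, I would recall the scalar Taylor identity
\[
x-\log(1+x)=\sum_{m\geq 0}\frac{(-1)^m}{m+2}x^{m+2},\qquad |x|<1,
\]
observe that in the operator setting $\|\J\D\Sigma g\|_{R\otimes_\pi R}$ is controlled by $2\|g\|_{R,\sigma}/R^2<1$ (by the same estimates used in Lemma~\ref{cyclic_derivative_bounded} applied entrywise), so the series $\J\D\Sigma g-\log(1+\J\D\Sigma g)=\sum_m\frac{(-1)^m}{m+2}(\J\D\Sigma g)^{m+2}$ makes sense in $M_N(W^*(\mathscr{P}\otimes\mathscr{P}^{op}))$. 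Continuity of the linear map $[(1\otimes\varphi)\circ\text{Tr}_A+(\varphi\otimes 1)\circ\text{Tr}_{A^{-1}}]$ then gives the desired identity, since by definition this map sends $(\J\D\Sigma g)^{m+2}$ to $Q_{m+2}(\Sigma g)$.

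Next, for the Lipschitz bound, I would combine Lemma~\ref{Q_m} with the substitution $a=2\|g\|_{R,\sigma}/R^2$, $b=2\|f\|_{R,\sigma}/R^2$. A direct computation rewrites the $(m+2)$-th term of Lemma~\ref{Q_m} as
\[
\|Q_{m+2}(\Sigma g)-Q_{m+2}(\Sigma f)\|_{R,\sigma}\leq \frac{4\|A\|}{R^2}\|f-g\|_{R,\sigma}\sum_{k=0}^{m+1}a^kb^{m+1-k}.
\]
Dividing by $m+2$ and summing over $m\geq 0$, with $n=m+1$, yields
\[
\|Q(\Sigma g)-Q(\Sigma f)\|_{R,\sigma}\leq \frac{4\|A\|}{R^2}\|f-g\|_{R,\sigma}\sum_{n=1}^\infty\frac{1}{n+1}\sum_{k+j=n}a^kb^j.
\]
Since $\frac{1}{n+1}\leq\frac{1}{2}$ for every $n\geq 1$, a term-by-term comparison with the double geometric expansion
\[
\frac{1}{(1-a)(1-b)}-1=\sum_{n=1}^\infty\sum_{k+j=n}a^kb^j
\]
gives the asserted Lipschitz estimate. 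Finally, the norm bound on $\|Q(\Sigma g)\|_{R,\sigma}$ follows at once by setting $f=0$ in the Lipschitz bound, noting $Q(\Sigma 0)=0$, and simplifying:
\[
\|Q(\Sigma g)\|_{R,\sigma}\leq\|g\|_{R,\sigma}\frac{2\|A\|}{R^2}\left(\frac{1}{1-2\|g\|_{R,\sigma}/R^2}-1\right)=\frac{4\|A\|\|g\|_{R,\sigma}^2}{R^4-2R^2\|g\|_{R,\sigma}}.
\]
The main (very modest) obstacle is the coefficient-counting in the Lipschitz step; beyond that the argument is a routine geometric-series manipulation, and everything else follows from Lemma~\ref{Q_m} and the scalar Taylor expansion of $x-\log(1+x)$.
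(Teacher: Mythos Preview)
Your proof is correct and follows essentially the same approach as the paper: both arguments feed the multilinear bound of Lemma~\ref{Q_m} into the series, use the factor $\frac{1}{m+2}\leq\frac{1}{2}$ to absorb the denominator, and then recognize the resulting double sum as $\frac{1}{(1-a)(1-b)}-1$ before specializing $f=0$ for the norm bound. The only cosmetic difference is that the paper carries $\kappa=R^2/2$ through the computation rather than your substitution $a,b$, and writes the reindexing as $m=l+k-1$ instead of $n=m+1$.
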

\begin{proof}
Let $\kappa=R^2/2$ and $\lambda = \|g\|_{R,\sigma}$. From Lemma \ref{Q_m} we know $\|Q_{m+2}(\Sigma g)\|_{R,\sigma} \leq  2\|A\| \left(\frac{\lambda}{\kappa}\right)^{m+2}$. Since $\lambda<\kappa$, the series defining $Q$ converges. The functional calculus equality then follows from $\log(1+x)=-\sum_{m\geq 1} \frac{(-x)^m}{m}$. Finally, since $m+2\geq 2$ in our series we obtain
	\begin{align*}
		\| Q(\Sigma g) - Q(\Sigma f)\|_{R,\sigma} &\leq \sum_{m\geq 0} \frac{1}{m+2} \| Q_{m+2}(\Sigma g) - Q_{m+2}(\Sigma f)\|_{R,\sigma}\\
								&\leq \|f-g\|_{R,\sigma} \|A\| \sum_{m\geq 0} \sum_{k=0}^{m+1} \kappa^{-m-2}\|f\|_{R,\sigma}^{m-k+1}\|g\|_{R,\sigma}^k\\
								&\leq \|f -g\|_{R,\sigma} \frac{\|A\| }{\kappa} \left( \sum_{l\geq 0}\sum_{k\geq 0} \kappa^{-l}\|f\|_{R,\sigma}^l \kappa^{-k} \|g\|_{R,\sigma}^k - 1\right),
	\end{align*}
where we have written $m=l+k-1$ which is non-negative so long as $l$ and $k$ are not both zero. Using $\|f\|_{R,\sigma}, \|g\|_{R,\sigma} < \kappa$ we see that
	\begin{align*}
		\| Q(\Sigma g) - Q(\Sigma f)\|_{R,\sigma} \leq \| f- g\|_{R,\sigma} \frac{2\|A\|}{R^2}\left( \frac{1}{\left(1-\frac{2\|f\|_{R,\sigma}}{R^2}\right)\left(1-\frac{2\|g\|_{R,\sigma}}{R^2}\right)} - 1\right).
	\end{align*}
Setting $f=0$ yields the bound
	\begin{align*}
		\|Q(\Sigma g)\|_{R,\sigma}\leq \|g\|_{R,\sigma} \frac{2\|A\|}{R^2} \frac{2\|g\|_{R,\sigma}}{R^2-2\|g\|_{R,\sigma}} = \frac{ 4\|A\| \|g\|_{R,\sigma}^2}{R^4-2R^2 \|g\|_{R,\sigma}},
	\end{align*}
as claimed.
\end{proof}

The proof of the following lemma is purely computational and left to the reader.

\begin{lem}
If $f=\D g$ for $g\in \mathscr{P}_\varphi^{(R,\sigma)}$ then 
	\begin{align}\label{eigenvalue_f}
		A^{-1}\# \sigma_{-i}(f)=f.
	\end{align}
Moreover, if $g=g^*$ then
	\begin{align}\label{dot_product_to_vector_product}
		\D\left( \frac{1}{2}\J_\sigma X^{-1}\# f\# f\right)=\J_\sigma f\# \J_\sigma X^{-1}\# f=\J f\# f.
	\end{align}
\end{lem}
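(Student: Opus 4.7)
For identity (\ref{eigenvalue_f}), my plan is to compute $\sigma_{-i}(\D_t g)$ explicitly on a monomial $g = X_{k_1}\cdots X_{k_n}\in\mathscr{P}_\varphi$. Starting from $\D_t = m\circ\diamond\circ(1\otimes\sigma_{-i})\circ\bar\partial_t$, commuting $\sigma_{-i}$ past $m\circ\diamond$ via $\sigma_{-i}\circ m\circ\diamond = m\circ\diamond\circ(\sigma_{-i}\otimes\sigma_{-i})$, substituting the intertwiner $\bar\partial_t = (\sigma_i\otimes\sigma_i)\circ\partial_t\circ\sigma_{-i}$ from (\ref{differentiating_sigma}), and applying $\sigma_{-i}(g)=g$ yields $\sigma_{-i}(\D_t g) = m\circ\diamond\circ(1\otimes\sigma_{-i})\circ\partial_t(g)$. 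Evaluated on the monomial, this gives
\[
\sigma_{-i}(f_t) = \sum_{l=1}^{n}\alpha_{k_l t}\,\sigma_{-i}(X_{k_{l+1}}\cdots X_{k_n})\,X_{k_1}\cdots X_{k_{l-1}}.
\]
Forming the $s$-entry of $A^{-1}\#\sigma_{-i}(f)$ pulls out the scalar $\sum_t [A^{-1}]_{st}\alpha_{k_l t} = [A^{-1}\cdot(\tfrac{2}{1+A})^T]_{sk_l}$. By property 2 of $A$ (that $A^T = A^{-1}$), this equals $[A^{-1}\cdot\tfrac{2A}{1+A}]_{sk_l} = \alpha_{sk_l}$, so $[A^{-1}\#\sigma_{-i}(f)]_s = \D_s(g) = f_s$, proving (\ref{eigenvalue_f}).

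For identity (\ref{dot_product_to_vector_product}), the equality $\J_\sigma f\#\J_\sigma X^{-1} = \J f$ is a quick entrywise computation: $[\J_\sigma f\#\J_\sigma X^{-1}]_{ij} = \sum_{k,l}\alpha_{lk}[\tfrac{1+A}{2}]_{kj}\delta_l(f_i) = \delta_j(f_i)$, since $\sum_k\alpha_{lk}[\tfrac{1+A}{2}]_{kj} = \delta_{lj}$. For the other equality, set $H := \tfrac12\J_\sigma X^{-1}\#f\#f = \tfrac12\sum_{ij}[\tfrac{1+A}{2}]_{ij}f_j f_i$. Applying the Leibniz rule to $\bar\partial_t(f_j f_i)$ and then $m\circ\diamond\circ(1\otimes\sigma_{-i})$ produces two summands: one carrying a factor $\sigma_{-i}(f_i)$ and one carrying an untwisted $f_j$. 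I plan to use identity (\ref{eigenvalue_f}) to rewrite $\sigma_{-i}(f_i) = \sum_p[A]_{ip}f_p$, after which the matrix identity $\sum_i [\tfrac{1+A}{2}]_{ij}[A]_{ip} = [\tfrac{1+A}{2}]_{jp}$ (again using $A^T = A^{-1}$) turns the first summand into a copy of the second after a dummy-index swap, absorbing the factor $\tfrac12$. The remaining step is to match the resulting expression with $[\J f\#f]_t = \sum_k\delta_k(f_t)\# f_k$ by expanding $f_j = \D_j g$ into monomials and checking that the combinatorial pairing of positions produces the same sum; this is the algebraic analog of the symmetry of the Hessian of $g$, which in general is encoded by the twisted self-adjointness $(\J_\sigma\D g)^* = (\sigma_i\otimes 1)(\J_\sigma\D g)$ from Lemma \ref{change_of_variables}.(iii).

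The \emph{main obstacle} is this last term-matching step: one must carefully track positions of $X_k$'s in $f_j$ against positions of $X_t$'s in $f_k$, while bookkeeping the $\sigma_{-i}$-twists that appear in the cyclic derivative $\D$. Identity (\ref{eigenvalue_f}) is the essential algebraic input that makes the modular twists compatible with the quadratic form structure in $f$, and the key matrix identities all trace back to property 2 of $A$.
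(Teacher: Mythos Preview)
Your plan is correct and is exactly the kind of direct computation the paper has in mind (the paper leaves this lemma to the reader, stating only that the proof is ``purely computational''). One small phrasing quibble: an individual monomial $X_{k_1}\cdots X_{k_n}$ is almost never in $\mathscr{P}_\varphi$, so you should first establish the operator identity $\sigma_{-i}\circ\D_t = m\circ\diamond\circ(1\otimes\sigma_{-i})\circ\partial_t\circ\sigma_{-i}$ on all of $\mathscr{P}$ (which you do), apply $\sigma_{-i}(g)=g$, and only then expand $g$ linearly into monomials --- your subsequent steps already follow this logic, so only the opening sentence needs adjusting.
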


\begin{lem}\label{dot_product_in_centralizer}
Suppose $f^{(i)}=\D\Sigma g_i$ with $g_i\in \mathscr{P}_{c.s.}$ for $i=1,2$. Then $(1+A)\# f^{(1)}\# f^{(2)}\in\mathscr{P}_\varphi$. Furthermore,
	\begin{align*}
		\left\|(1+A)\# f^{(1)}\# f^{(2)} \right\|_{R,\sigma} \leq \frac{2N\|A\|}{R^2} \|g_1\|_{R,\sigma}\|g_2\|_{R,\sigma}.
	\end{align*}
\end{lem}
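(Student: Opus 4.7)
\emph{Proof proposal.} The plan is in two parts: verify the centralizer claim, then establish the norm bound.

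First I check that $(1+A)\# f^{(1)}\# f^{(2)} \in \mathscr{P}_\varphi$ by showing $\sigma_i$-invariance. Rewriting the expression as $\sum_{i,j}[1+A]_{ij} f^{(1)}_j f^{(2)}_i$ and applying $\sigma_i$, equation~(\ref{eigenvalue_f}) yields $\sigma_i(f^{(r)}_s) = \sum_t [A^{-1}]_{st} f^{(r)}_t$. Substituting and using $A^T = A^{-1}$ (property~2 of~$A$), a direct matrix computation shows the coefficient of $f^{(1)}_k f^{(2)}_l$ equals $[I+A^{-1}]_{kl} = [1+A]_{lk}$, matching the original coefficient; hence $\sigma_i$-invariance holds.

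For the norm bound, I reduce to the homogeneous case $g_i\in\pi_{n_i}(\mathscr{P}_{c.s.})$ by linearity, writing $g_i=\sum c_i(\ul{i})X_{\ul{i}}$. Using the formula for $\D\Sigma g$ from Lemma~\ref{cyclic_derivative_bounded} together with the scalar identity $\sum_j [1+A]_{ij}\alpha_{jk} = 2\delta_{ik}$, the product admits the explicit expression
\[
(1+A)\# f^{(1)}\# f^{(2)} = 2\sum_{\ul{k},\ul{l}} c_1(\ul{k})c_2(\ul{l})\,\alpha_{k_{n_1}l_{n_2}}\,X_{k_1}\cdots X_{k_{n_1-1}}X_{l_1}\cdots X_{l_{n_2-1}}.
\]
Since $|\alpha_{k_{n_1}l_{n_2}}|\leq 1$, taking absolute values gives $\|(1+A)\# f^{(1)}\# f^{(2)}\|_R\leq \frac{2}{R^2}\|g_1\|_R\|g_2\|_R$. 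The factor $N\|A\|$ appearing in the claimed bound is a convenient loose upper bound, arising from $\sum_{i,j}|[1+A]_{ij}|\leq N(1+\|A\|)\leq 2N\|A\|$ via property~4 of~$A$.

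To upgrade from $\|\cdot\|_R$ to $\|\cdot\|_{R,\sigma}$, I bound each cyclic rearrangement $\|\rho^k(\cdot)\|_R$. For $0\leq k\leq n_2-1$, $\rho^k$ introduces a factor $A^1((l_{n_2-k},\dots,l_{n_2-1}),\ul{v})$ via $\sigma_{-i}$; I would absorb this into~$c_2$ using the $\sigma$-cyclic symmetry relation~(\ref{cyclically_symmetric_coefficients_positive}) for~$g_2$, after first rewriting $\alpha_{k_{n_1}l_{n_2}}$ using the identity $[2(1+A)^{-1}A]_{ij} = \alpha_{ji}$ (a consequence of $A^T=A^{-1}$), so that the $[A]_{l_{n_2}v_{k+1}}$ piece pairs with the $A^1$-factor to form a full $A^1((l_{n_2-k},\dots,l_{n_2}),\ul{v})$, which collapses via cyclic symmetry to a single coefficient of $c_2$. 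For $n_2\leq k\leq n_1+n_2-3$, first apply $\rho^{n_2-1}$, which merely swaps the two blocks while preserving $\alpha_{k_{n_1}l_{n_2}}$, then reduce to the previous case with the roles of $g_1$ and $g_2$ interchanged. Summing over bidegrees and using $\|g_i\|_R=\|g_i\|_{R,\sigma}$ for cyclically symmetric homogeneous polynomials yields the stated estimate. The main obstacle is the combinatorial bookkeeping when absorbing the $A^1$-factor into $c_2$ via cyclic symmetry while simultaneously rewriting $\alpha_{k_{n_1}l_{n_2}}$ to mesh with that symmetry.
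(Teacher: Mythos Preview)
Your proof is correct, and in fact sharper than the paper's, but the route differs. The key move you make that the paper does not is the algebraic collapse
\[
\sum_{i,j}[1+A]_{ij}\,\alpha_{ja}\,\alpha_{ib}=2\alpha_{ab},
\]
which follows from $(1+A)\cdot\frac{2}{1+A}=2$. This reduces the quantity to a single $\alpha$-coefficient per term, and since $|\alpha_{ab}|\leq 1$ your bound for each homogeneous piece is actually $\frac{2}{R^2}\|g_1\|_R\|g_2\|_R$, not $\frac{2N\|A\|}{R^2}\|g_1\|_R\|g_2\|_R$. The paper instead keeps the full sum $\sum_{i,j}[1+A]_{ij}$ uncollapsed and bounds it by $N(1+\|A\|)\leq 2N\|A\|$; this is where the extra factor $N\|A\|$ genuinely enters their estimate. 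So your sentence calling $N\|A\|$ ``a convenient loose upper bound'' in \emph{your} argument is misleading: it never appears in your computation, and your bound simply dominates the stated one because $N\|A\|\geq 1$.

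For the cyclic rearrangements, both arguments absorb the $\sigma_{\pm i}$-factors into the coefficients via the cyclic-symmetry relations, but the paper works with negative rotations and relation~(\ref{cyclically_symmetric_coefficients_negative}), while you use positive rotations and~(\ref{cyclically_symmetric_coefficients_positive}) together with the rewriting $\alpha_{ab}=\sum_w[A]_{bw}\alpha_{wa}$ so that the hidden index $l_{n_2}$ participates in the absorption. Your description of the case $k=n_2-1$ as ``merely swapping the two blocks while preserving $\alpha_{k_{n_1}l_{n_2}}$'' is not literally accurate---after the absorption step the surviving scalar is $\alpha_{l'_{n_2}k_{n_1}}$, the transpose---but since $|\alpha_{ab}|\leq 1$ either way, the estimate is unaffected and the roles of $g_1,g_2$ do interchange as you need. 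The bookkeeping you flag as the ``main obstacle'' works out exactly as you outline.
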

\begin{proof}
From (\ref{eigenvalue_f}) it is easy to see that $(1+A)\# f^{(1)}\# f^{(2)}\in\mathscr{P}_\varphi$. Now, write $g_1=\sum_{m=1}^\infty \sum_{|\ul{i}|=m} c_1(\ul{i}) X_{\ul{i}}$ and $g_2 = \sum_{n=1}^\infty \sum_{|\ul{j}|=n}c_2(\ul{j})X_{\ul{j}}$. Then (\ref{cyclic_derivative_of_cyclically_symmetric}) implies
	\begin{align*}
		f_j^{(1)}=\sum_{m=1}^\infty \sum_{\substack{|\ul{i}|=m-1\\ a\in\{1,\ldots, N\}}}\alpha_{ja} c_1(\ul{i}\cdot a) X_{\ul{i}}\qquad\text{ and }\qquad f_i^{(2)} = \sum_{n=1}^\infty \sum_{\substack{|\ul{j}|=n-1\\ b\in\{1,\ldots, N\}}} \alpha_{ib} c_2(\ul{j}\cdot b) X_{\ul{j}}.
	\end{align*}
Hence
	\begin{align*}
		(1+A)\# f^{(1)}\# f^{(2)} =\sum_{m,n=1}^\infty \sum_{i,j=1}^N [1+A]_{ij}  \sum_{a,b=1}^N \sum_{\substack{|\ul{i}|=m-1\\ |\ul{j}|=n-1}} \alpha_{ja}\alpha_{ib} c_1(\ul{i}\cdot a)c_2(\ul{j}\cdot b) X_{\ul{i}} X_{\ul{j}}.
	\end{align*}
It suffices to bound $\|\rho^k(\cdot )\|_R$ for $k\in\{-m-n+1,\ldots, 0\}$. First, for $k=0$ we simply have
	\begin{align*}
		\| (1+A)\#f^{(1)}\# f^{(2)}\|_R &\leq \sum_{i,j=1}^N | [1+A]_{ij}| \sum_{m,n=1}^\infty \sum_{\substack{|\ul{i}|=m-1,a\\ |\ul{j}|=n-1, b}} |c_1\left(\ul{i}\cdot a\right) c_2\left(\ul{j}\cdot b\right)| R^{m+n-2}\\
							&\leq N(1+\|A\|)\frac{1}{R^2} \left(\sum_{m=1}^\infty\sum_{\ul{i},a} |c_1(\ul{i}\cdot a)| R^m\right)\left(\sum_{n=1}^\infty \sum_{\ul{j},b} |c_2(\ul{j}\cdot b) R^n\right)\\
							&\leq \frac{2N\|A\|}{R^2}\|g_1\|_{R,\sigma}\|g_2\|_{R,\sigma}.
	\end{align*}
For $-m+1\leq k\leq -1$, we further fix $i,j,a,b$. Then using (\ref{cyclically_symmetric_coefficients_negative}) we have
	\begin{align*}
		\sum_{\substack{|\ul{i}|=m-1\\ |\ul{j}|=n-1}}  c_1(\ul{i}\cdot a)c_2(\ul{j}\cdot b)\rho^k\left( X_{\ul{i}} X_{\ul{j}}\right)=\sum_{\substack{ |\ul{\hat{l}}|=k\\ |\ul{i}|=m-k-1\\ |\ul{j}=n-1}}  c_2(\ul{j}\cdot b) c_1\left(\ul{i}\cdot a\cdot\ul{\hat{l}}\right) X_{\ul{j}} X_{\ul{\hat{l}}}.
	\end{align*}
Thus
	\begin{align*}
		\sum_{m,n=1}^\infty &\left\| \sum_{i,j=1}^N [1+A]_{ij}  \sum_{a,b=1}^N \sum_{\substack{|\ul{i}|=m-1\\ |\ul{j}|=n-1}} \alpha_{ja}\alpha_{ib} c_1(\ul{i}\cdot a)c_2(\ul{j}\cdot b)\rho^k\left( X_{\ul{i}} X_{\ul{j}}\right)\right\|_R\\
				 &\leq \sum_{m,n=1}^\infty \sum_{i,j=1}^N |[1+A]_{ij}| \sum_{\substack{ |\ul{i}|=m\\ |\ul{j}|=n}} |c_1(\ul{i}) c_2(\ul{j}) | R^{n+m-2}\\
				 &\leq N(1+\|A\|) \frac{1}{R^2}\left(\sum_{m=1}^\infty\sum_{\ul{i},a} |c_1(\ul{i}\cdot a)| R^m\right)\left(\sum_{n=1}^\infty \sum_{\ul{j},b} |c_2(\ul{j}\cdot b) R^n\right)\\
							&\leq \frac{2N\|A\|}{R^2}\|g_1\|_{R,\sigma}\|g_2\|_{R,\sigma}.
	\end{align*}
The cases for $-m-n+1\leq k \leq -m$ are similar after using $\sigma_i(g_1)=g_1$. Thus the claimed bound holds.
\end{proof}

\begin{cor}\label{F}
Assume $R\geq4$. Let $g\in \mathscr{P}^{(R,\sigma)}_{c.s.}$ and assume that $\|g\|_{R,\sigma}< R^2/2$. Let $S\geq R+\|g\|_{R,\sigma}$ and let $W\in \mathscr{P}^{(S)}_{c.s.}$. Let
	\begin{align*}
		F(g)&= - W(X+\D\Sigma g) -   \frac{1}{4}\left\{(1+A)\#\D\Sigma g\right\} \#\D\Sigma g + \left[(1\otimes\varphi)\circ\text{Tr}_{A}+(\varphi\otimes 1)\circ\text{Tr}_{A^{-1}}\right]\circ\log(1+\J\D\Sigma g)\\
			&=- W(X+\D\Sigma g) -   \frac{1}{4}\left\{(1+A)\#\D\Sigma g\right\} \#\D\Sigma g  +  \left[(1\otimes\varphi)\circ\text{Tr}_{A}+(\varphi\otimes 1)\circ\text{Tr}_{A^{-1}}\right](\J\D\Sigma g)-Q(\Sigma g).
	\end{align*}
Then $F(g)$ is a well-defined function from $\mathscr{P}^{(R,\sigma)}_{c.s.}$ to $\mathscr{P}^{(R,\sigma)}_{\varphi}$. Moreover, $g\mapsto F(g)$ is locally Lipschitz on $\{g\colon \|g\|_{R,\sigma}< R^2/2\}$:
	\begin{align*}
		\| F(g)&-F(f)\|_{R,\sigma}\leq\\
			&\leq \| f - g\|_{R,\sigma} \left\{ \frac{2\|A\|}{R^2} \left( \frac{1}{\left(1-\frac{2\|f\|_{R,\sigma}}{R^2}\right)\left(1-\frac{2\|g\|_{R,\sigma}}{R^2}\right)} + 1+ \frac{N}{4}(\|g\|_{R,\sigma}+\|f\|_{R,\sigma}) \right) + \sum_{j=1}^N\| \delta_j(W)\|_{S\otimes_\pi S}\right\},
	\end{align*}
and bounded:
	\begin{align*}
		\| F(g)\|_{R,\sigma} \leq \| g\|_{R,\sigma} \left\{ \frac{2\|A\|}{R^2-2\|g\|_{R,\sigma}} + \frac{2\|A\|}{R^2}+ \frac{N\|A\|}{2R^2}\|g\|_{R,\sigma} + \sum_{j=1}^N\| \delta_j(W)\|_{S\otimes_\pi S}\right\}+\|W\|_{R,\sigma}.
	\end{align*}
In particular, if
	\begin{align}\label{contractive_data}
		\left\{\begin{array}{l} R\geq4\sqrt{\|A\|},\qquad 0<\rho\leq 1\\
					    \|W\|_{R,\sigma}<\frac{\rho}{2N}\\
					    \sum_j\|\delta_j(W)\|_{(R+\rho)\otimes_{\pi}(R+\rho)}<\frac{1}{8}\end{array}\right.,
	\end{align}
then $F$ takes the ball $E_1=\left\{g\in\mathscr{P}^{(R,\sigma)}_{c.s.}\colon \|g\|_{R,\sigma}<\frac{\rho}{N}\right\}$ to the ball $E_2=\left\{ g\in\mathscr{P}^{(R,\sigma)}_\varphi\colon \|g\|_{R,\sigma}<\frac{\rho}{N}\right\}$ and is uniformly contractive with constant $\lambda\leq \frac{1}{2}$ on $E_1$.
\end{cor}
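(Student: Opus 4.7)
The plan is to analyze each of the four summands of $F(g)$ separately using the preceding lemmas, then assemble the two global estimates via the triangle inequality. Throughout, I will use Lemma \ref{Q_2}, which gives the equality of the two displayed expressions for $F(g)$ whenever $\|g\|_{R,\sigma}<R^2/2$ (so that $\|\J\D\Sigma g\|_{R\otimes_\pi R}<1$), and Lemma \ref{cyclic_derivative_bounded}, which gives $\|\D\Sigma g\|_R\leq\|g\|_{R,\sigma}/R$, ensuring $\|X+\D\Sigma g\|_R\leq R+\|g\|_{R,\sigma}\leq S$ and hence that $W(X+\D\Sigma g)$ is a well-defined element of $\mathscr{P}^{(R)}$.

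Three of the four terms are quick applications of earlier lemmas. First, the term $[(1\otimes\varphi)\text{Tr}_A+(\varphi\otimes 1)\text{Tr}_{A^{-1}}](\J\D\Sigma g)$ is exactly $Q_1(\Sigma g)$ in the notation of the lemma preceding Lemma \ref{Q_m}; by that lemma it lies in $\mathscr{P}^{(R,\sigma)}_\varphi$ with norm $\leq\tfrac{4\|A\|}{R^2}\|g\|_{R,\sigma}$, and is linear, so the Lipschitz constant equals the operator norm. Second, the term $Q(\Sigma g)$ is handled completely by Lemma \ref{Q_2}. Third, $\tfrac14\{(1+A)\#\D\Sigma g\}\#\D\Sigma g$ is in $\mathscr{P}^{(R,\sigma)}_\varphi$ by Lemma \ref{dot_product_in_centralizer}; for its Lipschitz constant I will use the bilinear splitting
\[
\{(1+A)\#\D\Sigma g\}\#\D\Sigma g-\{(1+A)\#\D\Sigma f\}\#\D\Sigma f=\{(1+A)\#\D\Sigma(g-f)\}\#\D\Sigma g+\{(1+A)\#\D\Sigma f\}\#\D\Sigma(g-f)
\]
and apply Lemma \ref{dot_product_in_centralizer} to each summand.

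The subtle term is $W(X+\D\Sigma g)$, which will be the main obstacle. Membership in $M_\varphi$ will follow from $W\in\mathscr{P}_{c.s.}\subset\mathscr{P}_\varphi$ together with identity (\ref{eigenvalue_f}), which shows that $Y:=X+\D\Sigma g$ satisfies $\sigma_{-i}(Y_j)=\sum_k[A]_{jk}Y_k$, the same transformation law as $X$. For the norm and Lipschitz estimates I plan to use the non-commutative difference formula
\[
W(Y)-W(Z)=\sum_{j=1}^N(\delta_jW)(Y,Z)\#(Y_j-Z_j),
\]
where $(\delta_jW)(Y,Z)$ substitutes $Y$ into the left tensor leg and $Z$ into the right leg of each summand of $\delta_jW$. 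Taking $Z=X$ (for the absolute bound) or $Z=X+\D\Sigma f$ (for the Lipschitz bound), each $(\delta_jW)(Y,Z)$ has $\|\cdot\|_{R\otimes_\pi R}$-norm bounded by $\|\delta_jW\|_{S\otimes_\pi S}$, and the differences $Y_j-Z_j$ are components of $\D\Sigma g$ or $\D\Sigma(g-f)$. The hard part is upgrading the resulting $\|\cdot\|_R$-estimate to a $\|\cdot\|_{R,\sigma}$-estimate without picking up the exponential factor $\|A\|^{\deg P-1}$ from the trivial bound $\|P\|_{R,\sigma}\leq\|A\|^{\deg P-1}\|P\|_R$; I expect this to require a direct coefficient-level computation of $\rho^k\pi_n$ of each contribution, using the $\sigma$-cyclicity identities (\ref{cyclically_symmetric_coefficients_positive})--(\ref{cyclically_symmetric_coefficients_negative}) for $W$ and the eigenvalue identity (\ref{eigenvalue_f}) for $\D\Sigma g$, in the combinatorial spirit of Lemma \ref{dot_product_in_centralizer}, yielding the contribution $\sum_j\|\delta_jW\|_{S\otimes_\pi S}\|g-f\|_{R,\sigma}$ to the Lipschitz constant.

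Summing the four termwise Lipschitz estimates gives the stated global Lipschitz bound, and setting $f=0$ (so that $F(0)=-W$) yields the absolute bound, with $\|W\|_{R,\sigma}$ coming from $\|F(0)\|$. To verify the contractive conclusion under (\ref{contractive_data}), I will use $\|g\|_{R,\sigma},\|f\|_{R,\sigma}<\rho/N\leq 1/N$ and $R^2\geq 16\|A\|$ to bound $2\|g\|_{R,\sigma}/R^2,2\|f\|_{R,\sigma}/R^2\leq 1/8$, so that $1/[(1-2\|g\|_{R,\sigma}/R^2)(1-2\|f\|_{R,\sigma}/R^2)]\leq 64/49$ and $\tfrac{N}{4}(\|g\|_{R,\sigma}+\|f\|_{R,\sigma})\leq\rho/2\leq 1/2$; the Lipschitz coefficient is then bounded by $\tfrac{2\|A\|}{R^2}(64/49+1+1/2)+\tfrac{1}{8}\leq\tfrac{1}{8}\cdot 3+\tfrac{1}{8}=\tfrac{1}{2}$, giving contraction constant $\leq 1/2$ on $E_1$. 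Setting $f=0$ in this estimate and combining with $\|W\|_{R,\sigma}<\rho/(2N)$ gives $\|F(g)\|_{R,\sigma}<\tfrac{1}{2}\cdot\tfrac{\rho}{N}+\tfrac{\rho}{2N}=\tfrac{\rho}{N}$, showing $F(E_1)\subset E_2$.
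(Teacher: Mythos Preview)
Your proposal is correct and follows essentially the same decomposition and lemma citations as the paper. The treatment of the $Q_1$, $Q$, and quadratic terms, the assembly via triangle inequality, and the numerical verification under (\ref{contractive_data}) all match the paper (your derivation of $F(E_1)\subset E_2$ via the Lipschitz bound plus $\|F(0)\|_{R,\sigma}=\|W\|_{R,\sigma}$ is actually a bit cleaner than the paper's, which plugs into the explicit $\|F(g)\|_{R,\sigma}$ bound directly).

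The only real difference is in the $W(X+\D\Sigma g)$ term. You correctly flag the passage from $\|\cdot\|_R$ to $\|\cdot\|_{R,\sigma}$ as the delicate point and propose a coefficient-level computation in the style of Lemma \ref{dot_product_in_centralizer}. The paper sidesteps this: after establishing $W(X+\D\Sigma g)\in\mathscr{P}_\varphi$ exactly as you outline, it asserts (invoking (\ref{cyclically_symmetric_coefficients_positive}) for the coefficients of $W$, together with the transformation law $\sigma_{-i}(Y)=A\#Y$) that in fact
\[
\|W(X+\D\Sigma g)\|_{R,\sigma}=\|W(X+\D\Sigma g)\|_R,
\]
and likewise for the difference $W(X+\D\Sigma g)-W(X+\D\Sigma f)$. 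Once this equality is in hand, the Lipschitz estimate is the one-line $\|\cdot\|_R$ bound from \cite{SG11}, namely $\sum_j\|\delta_j W\|_{S\otimes_\pi S}\,\|\D\Sigma g-\D\Sigma f\|_R$. Your planned computation would ultimately establish the same thing; the paper just packages it as an equality of norms rather than carrying the $\rho^k\pi_n$ bookkeeping through the difference formula.
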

\begin{proof}
From Lemma \ref{cyclic_derivative_bounded} we know $S\geq R+ \|g\|_{R,\sigma}> R+\|\D\Sigma g\|_R$, and thus $W(X+\D\Sigma g)$ is well-defined as an element of $\mathscr{P}^{(R)}$. We claim that in fact, $W(X+\D\Sigma g)\in \mathscr{P}_{\varphi}^{(R,\sigma)}$. Indeed, first note that from (\ref{eigenvalue_f}) we have $\sigma_{-i}(X+\D\Sigma g)=A\#(X+\D\Sigma g)$. Hence
	\begin{equation*}
		\sigma_{-i}\left( W(X+\D\Sigma g)\right) = W( \sigma_{-i}(X+\D \Sigma g)) = W(A\#(X+\D\Sigma g))=\sigma_{-i}(W)(X+\D\Sigma g)=W(X+\D\Sigma g),
	\end{equation*}
where we have used (\ref{centralizer_coefficients}). Then, using (\ref{cyclically_symmetric_coefficients_positive}) it is not hard to see that $\|W(X+\D\Sigma g)\|_{R,\sigma}=\|W(X+\D\Sigma g)\|_{R}<\infty$. That $F(g)\in \mathscr{P}_\varphi^{(R,\sigma)}$ follows from Lemmas \ref{centralizer_commutes_with_A} and \ref{dot_product_in_centralizer} and $W(X+\D\Sigma g)\in\mathscr{P}^{(R,\sigma)}_\varphi$. \par
Using Lemmas \ref{Q_m} and \ref{Q_2} we have
	\begin{align*}
		 \| Q(\Sigma g)-Q(\Sigma f)\|_{R,\sigma} &+ \left\| \left[(1\otimes\varphi)\circ\text{Tr}_{A}+(\varphi\otimes 1)\circ\text{Tr}_{A^{-1}}\right](J\D\Sigma (g-f))\right\|_{R,\sigma}\\
					        \leq&  \|f-g\|_{R,\sigma}\frac{2\|A\|}{R^2} \left( \frac{1}{\left(1-\frac{2\|f\|_{R,\sigma}}{R^2}\right)\left(1-\frac{2\|g\|_{R,\sigma}}{R^2}\right)} - 1\right) +\|A\| \frac{2^2}{R^2} \|f-g\|_{R,\sigma} \\
						=& \|f-g\|_{R,\sigma}\frac{2\|A\|}{R^2} \left( \frac{1}{\left(1-\frac{2\|f\|_{R,\sigma}}{R^2}\right)\left(1-\frac{2\|g\|_{R,\sigma}}{R^2}\right)} + 1\right).
	\end{align*}\par
The following estimate is essentially identical to the one produced in Corollary 3.12 in \cite{SG11}:
	\begin{align*}
		\|W(X+\D\Sigma g) - W(X+\D\Sigma f)\|_{R,\sigma} &= \|W(X+\D\Sigma g) - W(X+\D\Sigma f)\|_R\\
										&\leq \sum_j\| \delta_j( W)\|_{S\otimes_\pi S} \|\D\Sigma g- \D\Sigma f\|_R\\
										& \leq \sum_j\|\delta_j(W)\|_{S\otimes_\pi S} \|f-g\|_{R,\sigma}.
	\end{align*}\par
Finally, from Corollary \ref{dot_product_in_centralizer}, we know
	\begin{align*}
		\frac{1}{4}\| \left\{(1+A)\#\D\Sigma g\right\}& \#\D\Sigma g - \left\{(1+A)\#\D\Sigma f\right\} \#\D\Sigma f \|_{R,\sigma}\\
					 &\leq \frac{1}{4} \left\| \left\{(1+A)\#\D\Sigma (g-f)\right\} \#\D\Sigma g\right\|_{R,\sigma} + \frac{1}{4} \left\| \left\{(1+A)\#\D\Sigma f\right\} \#\D\Sigma (g-f)\right\|_{R,\sigma}\\
					 &\leq \frac{1}{4} \frac{2N\|A\|}{R^2}\| f-g\|_{R\sigma}\|g\|_{R,\sigma} + \frac{1}{4} \frac{2N\|A\|}{R^2}\| f\|_{R\sigma}\|f-g\|_{R,\sigma}\\
					 &= \frac{N\|A\|}{2R^2}\|f-g\|_{R,\sigma} (\|g\|_{R,\sigma}+\|f\|_{R,\sigma}).
	\end{align*}
Combining the previous three estimates yields
	\begin{align*}
		\|F(f)&-F(g)\|_{R,\sigma} \\
			&\leq \| f - g\|_{R,\sigma} \left\{ \frac{2\|A\|}{R^2} \left( \frac{1}{\left(1-\frac{2\|f\|_{R,\sigma}}{R^2}\right)\left(1-\frac{2\|g\|_{R,\sigma}}{R^2}\right)} + 1+ \frac{N}{4}(\|g\|_{R,\sigma}+\|f\|_{R,\sigma} \right) + \sum_{j=1}^N\| \delta_j(W)\|_{S\otimes_\pi S}\right\},
	\end{align*}
as claimed. The estimate on $\|F(g)\|_{R,\sigma}$ then follows from the above and $F(0)=-W(X)$.\par
Now, suppose (\ref{contractive_data}) holds and let $f,g\in E_1$. Note that $R\geq4$ and $\|f\|_{R,\sigma},\|g\|_{R,\sigma}<\frac{1}{N}\leq 1$. Hence the Lipschitz property implies
	\begin{align*}
		\|F(f)-F(g)\|_{R,\sigma} \leq \|f-g\|_{R,\sigma}\left\{ \frac{1}{8}\left( \frac{64}{49}+1+\frac{1}{2}\right)+\frac{1}{8}\right\} =\|f-g\|_{R,\sigma} \left\{\frac{8}{49}+\frac{5}{16}\right\}< \frac{1}{2}\|f-g\|_{R,\sigma}.
	\end{align*}
The bound on $F$ then implies
	\begin{align*}
		\|F(g)\|_{R,\sigma} \leq \frac{\rho}{N}\left\{ \frac{1}{7}+\frac{1}{8}+\frac{1}{32}+\frac{1}{8}\right\}+\frac{\rho}{2N}<\frac{\rho}{2N}+\frac{\rho}{2N}=\frac{\rho}{N},
	\end{align*}
and so $F$ maps $E_1$ into $E_2$.
\end{proof}

%	Existence of $g$.
%%%%%%%%%%%

\subsection{Existence of $g$.}\label{existence_of_g}

\begin{prop}\label{g_exists}
Assume that for some $R\geq4\sqrt{\|A\|}$ and some $0<\rho\leq 1$, $W\in\mathscr{P}_{c.s.}^{(R+\rho,\sigma)}\subset\mathscr{P}_{c.s.}^{(R,\sigma)}$ and that
	\begin{align}\label{simple_contractive_data}
			\left\{\begin{array}{l}\|W\|_{R,\sigma}<\frac{\rho}{2N}\\
					   	 \sum_j\|\delta_j(W)\|_{(R+\rho)\otimes_{\pi}(R+\rho)}<\frac{1}{8}\end{array}\right..
	\end{align}
Then there exists $\hat{g}$ and $g=\Sigma \hat{g}$ with the following properties:
	\begin{enumerate}
	\item[(i)] $\hat{g},g\in\mathscr{P}_{c.s.}^{(R,\sigma)}$
	
	\item[(ii)] $\hat{g}$ satisfies the equation $\hat{g}=\mathscr{S}\Pi F(\hat{g})$
	
	\item[(iii)] $g$ satisfies the equation
			\begin{align}\label{hatless_g}
				\mathscr{N} g= \mathscr{S}\Pi\left[ -W(X+\D g) \vphantom{\frac{1}{4}}\right.&- \frac{1}{4}\left\{(1+A)\#\D g\right\}\# \D g\notag\\
														&\left. \vphantom{\frac{1}{4}}+ \left[(1\otimes\varphi)\circ\text{Tr}_{A}+(\varphi\otimes 1)\circ\text{Tr}_{A^{-1}}\right]\circ\log(1+\J\D\Sigma g) \right],
			\end{align}
	or, equivalently,
			\begin{align}\label{anti-cyclic_derivative}
				\mathscr{S}\Pi\left[ (1\otimes\varphi)\circ\text{Tr}_A+\right.&\left.(\varphi\otimes 1)\circ\text{Tr}_{A^{-1}}\right](\J\D g) -\mathscr{N}g\notag\\
						&= \mathscr{S}\Pi\left\{ W(X+\D g)  + Q(g) + \frac{1}{4}\left\{(1+A)\# \D g\right\}\# \D g\right\}.
			\end{align}
	
	\item[(iv)] If $W=W^*$, then $\hat{g}=\hat{g}^*$ and $g=g^*$.
	
	\item[(v)] $\hat{g}$ and $g$ depend analytically on $W$, in the following sense: if the maps $\beta\mapsto W_\beta$ are analytic, then also the maps $\beta\mapsto \hat{g}(\beta)$ and $\beta\mapsto g(\beta)$ are analytic, and $g\rightarrow 0$ if $\|W\|_{R,\sigma}\rightarrow 0$.
	\end{enumerate}
\end{prop}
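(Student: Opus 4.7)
My strategy is a Banach fixed-point argument applied to the composite $T := \mathscr{S}\Pi\circ F$ on the ball $E_1 = \{h\in \mathscr{P}^{(R,\sigma)}_{c.s.}\colon \|h\|_{R,\sigma}<\rho/N\}$ of Corollary~\ref{F}, and then defining $g := \Sigma\hat g$. First I observe that the hypotheses (\ref{simple_contractive_data}), together with the standing $R\geq 4\sqrt{\|A\|}$ and $0<\rho\leq 1$, are precisely the hypotheses (\ref{contractive_data}) of Corollary~\ref{F}, which gives that $F$ is $\tfrac{1}{2}$-contractive from $E_1$ into the corresponding ball $E_2\subset\mathscr{P}^{(R,\sigma)}_\varphi$. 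Since $\mathscr{S}\Pi\colon \mathscr{P}^{(R,\sigma)}_\varphi\to \mathscr{P}^{(R,\sigma)}_{c.s.}$ has operator norm at most $1$ (shown earlier), $T$ is a $\tfrac{1}{2}$-contraction of $E_1$ into itself, and Picard iteration starting at $0$ converges to a unique fixed point $\hat g$, proving (ii). That $g = \Sigma\hat g \in \mathscr{P}^{(R,\sigma)}_{c.s.}$ follows from the fact that $\Sigma$ commutes with $\rho$ (both act diagonally on the degree decomposition), giving (i).

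For (iii), I apply $\mathscr{N}$ to $\hat g = \mathscr{S}\Pi F(\hat g)$. Since the image of $\Pi$ has zero constant term and $\mathscr{N}\Sigma = \Pi$, one gets $\mathscr{N}g = \Pi\hat g = \hat g$. Likewise $g$ has zero constant term, so $\Sigma\mathscr{N}g = g$ and therefore $\D\Sigma\hat g = \D g$ and $\J\D\Sigma\hat g = \J\D g$; substituting into the right-hand side of the fixed-point equation gives (\ref{hatless_g}). The rewriting (\ref{anti-cyclic_derivative}) is then immediate from the identity
\[ [(1\otimes\varphi)\circ\text{Tr}_A + (\varphi\otimes 1)\circ\text{Tr}_{A^{-1}}]\circ\log(1+\J\D g) = [(1\otimes\varphi)\circ\text{Tr}_A + (\varphi\otimes 1)\circ\text{Tr}_{A^{-1}}](\J\D g) - Q(g) \]
supplied by Lemma~\ref{Q_2}.

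For (iv), I argue by uniqueness. The norm $\|\cdot\|_{R,\sigma}$ is $*$-invariant, and $\mathscr{P}^{(R,\sigma)}_{c.s.}$ is $*$-closed because $\sigma_{-i}(X_j)^* = \sigma_i(X_j)$ gives $\rho(P^*) = \rho^{-1}(P)^*$, so $*$ preserves $E_1$. When $W = W^*$, I verify $F(g)^* = F(g^*)$ term by term: for self-adjoint $g\in\mathscr{P}_\varphi$ each $\D_j\Sigma g$ is self-adjoint (since $(\D_j g)^* = \bar{\D}_j g = \D_j\sigma_i g = \D_j g$), so $W(X+\D\Sigma g)^* = W^*(X+\D\Sigma g) = W(X+\D\Sigma g)$; the quadratic term is self-adjoint because $A = A^*$; and the log-trace term is self-adjoint via a short calculation combining $(\J\D g)^* = (\sigma_i\otimes 1)\J\D g$ from Lemma~\ref{change_of_variables}(iii) with the cyclicity built into $\text{Tr}_A$ and $\text{Tr}_{A^{-1}}$. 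Since $\mathscr{S}\Pi$ obviously commutes with $*$, $\hat g^*$ is also a fixed point of $T$, so uniqueness forces $\hat g = \hat g^*$, and therefore $g = g^*$.

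For (v), the Picard iterates $\hat g_n = T(\hat g_{n-1})$ with $\hat g_0 = 0$ depend analytically on $\beta$ whenever $W_\beta$ does, because $F$ is assembled from $W$ by operations (composition, $\#$-multiplication, traces, and the convergent log-series defining $Q$) that are analytic on the Banach algebra $\mathscr{P}^{(R,\sigma)}$. The contraction bound $\|\hat g - \hat g_n\|_{R,\sigma}\leq 2^{-n+1}\|T(0)\|_{R,\sigma}\leq 2^{-n+1}\|W\|_{R,\sigma}$ is uniform in $\beta$ on any set where $\|W_\beta\|_{R,\sigma}$ is bounded, so $\hat g$ is analytic (uniform limit of analytic Banach-valued maps), and hence so is $g = \Sigma\hat g$. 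Taking $n = 0$ yields $\|\hat g\|_{R,\sigma}\leq 2\|W\|_{R,\sigma}$, so $g\to 0$ as $\|W\|_{R,\sigma}\to 0$. The main obstacle is the log-trace self-adjointness check in (iv), though it reduces to a short manipulation once $(\J\D g)^* = (\sigma_i\otimes 1)\J\D g$ is in hand.
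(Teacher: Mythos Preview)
Your proof follows essentially the same route as the paper's: a Banach fixed-point argument on $T=\mathscr{S}\Pi F$ using Corollary~\ref{F}, with the same identification $\mathscr{N}g=\hat g$ for (iii) and the same analyticity-of-iterates argument for (v). The paper starts Picard iteration at $\hat g_0=W$ rather than $0$; your choice yields the slightly better bound $\|\hat g\|_{R,\sigma}\leq 2\|W\|_{R,\sigma}$ versus the paper's $6\|W\|_{R,\sigma}$, but this is cosmetic.

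There is one small inconsistency in your argument for (iv). Your term-by-term verification is carried out for \emph{self-adjoint} $g$ --- in particular, the appeal to Lemma~\ref{change_of_variables}(iii) for the log-trace term explicitly requires $g=g^*$ --- so what you actually establish is that $T$ maps self-adjoint elements to self-adjoint elements. Your concluding step, however, invokes uniqueness: ``$\hat g^*$ is also a fixed point of $T$, so $\hat g=\hat g^*$.'' That step needs $T(\hat g)^*=T(\hat g^*)$, i.e., the identity $F(g)^*=F(g^*)$ for a $g$ not yet known to be self-adjoint. The easiest repair is to drop the uniqueness step and conclude by iterate-preservation instead: since $\hat g_0=0$ is self-adjoint and (by your checks) $T$ preserves self-adjointness, each $\hat g_n$ is self-adjoint, and hence so is the $\|\cdot\|_{R,\sigma}$-limit $\hat g$. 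This is precisely the paper's approach (starting from $\hat g_0=W=W^*$).
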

\begin{proof}
We remark that Equation (\ref{hatless_g}) is equivalent to
	\begin{align*}
		\mathscr{N}g=\mathscr{S}\Pi F(\mathscr{N} g),
	\end{align*}
with $F$ as in Corollary \ref{F}. Under our current assumptions, the hypotheses of the corollary are satisfied. We set $\hat{g}_0=W(X_1,\ldots, X_N)\in E_1$ and for each $k\in\N$,
	\begin{equation*}
		\hat{g}_k:=\mathscr{S}\Pi F(\hat{g}_{k-1}).
	\end{equation*}
Since $F$ maps into $\mathscr{P}_\varphi^{(R,\sigma)}$, on which $\mathscr{S}\Pi$ is a linear contraction, and $\mathscr{S}\Pi E_2\subset E_1$, the final part of Corollary \ref{F} implies that $\mathscr{S}\Pi F$ is uniformly contractive with constant $\frac{1}{2}$ on $E_1$ and takes $E_1$ to itself. Thus $\hat{g}_k\in E_1$ for all $k$ and
	\begin{align*}
		\| \hat{g}_k - \hat{g}_{k-1}\|_{R,\sigma}=\left\| \mathscr{S}\Pi F(\hat{g}_{k-1}) - \mathscr{S}\Pi F(\hat{g}_{k-2})\right\|_{R,\sigma} < \frac{1}{2} \| \hat{g}_{k-1} - \hat{g}_{k-2}\|_{R,\sigma},
	\end{align*}
implying that $\hat{g}_k\rightarrow \hat{g}$ in $\|\cdot\|_{R,\sigma}$, with $\hat{g}$ a fixed point of $\mathscr{S}\Pi F$. We note that $\hat{g}\neq 0$ as $\mathscr{S}\Pi F(0)=\mathscr{S}\Pi (W)=W\neq 0$. Since $\hat{g}\in \mathscr{P}_{c.s.}^{(R,\sigma)}$, we also have $g:=\Sigma \hat{g} \in \mathscr{P}_{c.s.}^{(R,\sigma)}$. This proves (i) and (ii), and (iii) simply follows from the relation $\hat{g}=\mathscr{N}g$ and the definition of $F$.\par
It is not hard to see that for $h=h^*$, $\mathscr{S}\Pi F( h)^*=\mathscr{S}\Pi F(h)$. Hence if we assume $\hat{g}_0=W$ is self-adjoint, then each successive $\hat{g}_k$ will be self-adjoint. Consequently so will their limit $\hat{g}$ since $\|\cdot\|_R$ (which is invariant under $*$) is dominated by $\|\cdot \|_{R,\sigma}$. It follows that $g=\Sigma \hat{g}$ is self-adjoint as well.\par
Assume $\beta\mapsto W_\beta$ is analytic. Then each iterate $\hat{g}_k(\beta)$ is clearly analytic as well, and the convergence to $\hat{g}(\beta)$ is uniform on any compact disk inside $|\beta|<\beta_0$. Thus the Cauchy integral formula implies the limit $\hat{g}(\beta)$ is analytic as well, and clearly so is $g(\beta)=\Sigma \hat{g}(\beta)$.\par
Finally, we remark that $\|g\|_{R,\sigma}$ is bounded by $\|W\|_{R,\sigma}$. Indeed,
	\begin{align*}
		\|\hat{g}- W\|_{R,\sigma}=\| \hat{g}-\hat{g}_0\|_{R,\sigma} \leq 2\|\hat{g}_1-\hat{g}_0\|_{R,\sigma} \leq 2\left( \left[\|\hat{g}_0\|_{R,\sigma}\left\{\frac{1}{2}\right\} + \|W\|_{R,\sigma}\right] + \|\hat{g}_0\|_{R\sigma} \right) = 5\|W\|_{R,\sigma},
	\end{align*}
or $\|\hat{g}\|_{R,\sigma}\leq 6\|W\|_{R,\sigma}$. Since $\|g\|_{R,\sigma} = \|\Sigma\hat{g}\|_{R,\sigma}\leq \|\hat{g}\|_{R,\sigma}$, it follows that $g\mapsto 0$ as $\|W\|_{R,\sigma} \mapsto 0$.
\end{proof}

\begin{thm}\label{existence_of_f}
Let $R'>R\geq 4\sqrt{\|A\|}$. Then there exists a constant $C>0$ depending only on $R$, $R'$, and $N$ so that whenever $W=W^*\in \mathscr{P}_{c.s.}^{(R'+1)}$ satisfies $\|W\|_{R'+1,\sigma}<C$, there exists $f\in\mathscr{P}^{(R)}$ which satisfies Equation (\ref{S-D_Y_2}). In addition, $f=\D g$ for $g\in\mathscr{P}^{(R,\sigma)}_{c.s.}$. The solution $f=f_W$ satisfies $\|f_W\|_R\rightarrow 0$ as $\|W\|_{R'+1,\sigma}\rightarrow 0$. Moreover, if $W_\beta$ is a family which is analytic in $\beta$ then also the solutions $f_{W_\beta}$ are analytic in $\beta$.
\end{thm}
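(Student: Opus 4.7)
The strategy is to construct $f$ as $\D g$ where $g$ is produced by Proposition~\ref{g_exists}; the two-level regularity gap $R<R'<R'+1$ built into the hypothesis is precisely what makes the required differentiation estimates go through. Fix an intermediate radius $R''\in(R,R')$, for instance $R''=\tfrac{R+R'}{2}$, and apply Proposition~\ref{g_exists} with its $R$ replaced by $R''$ and $\rho=1$. Since $W\in\mathscr{P}_{c.s.}^{(R'+1)}$ and $R''+1\leq R'+1$, we have $W\in\mathscr{P}_{c.s.}^{(R''+1,\sigma)}$ (the norms $\|\cdot\|_S$ and $\|\cdot\|_{S,\sigma}$ coincide on $\sigma$-cyclically symmetric elements). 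The first condition in (\ref{simple_contractive_data}), $\|W\|_{R'',\sigma}<\tfrac{1}{2N}$, is dominated by $\|W\|_{R'+1,\sigma}$. For the second, the monomial estimate $\|\delta_j X_{\ul{i}}\|_{S\otimes_\pi S}\leq |\ul{i}|\,S^{|\ul{i}|-1}$ combined with $R''+1<R'+1$ gives
\begin{align*}
\sum_{j=1}^N \|\delta_j W\|_{(R''+1)\otimes_\pi(R''+1)} \leq N\Bigl(\sup_{n\geq 1}\frac{n(R''+1)^{n-1}}{(R'+1)^n}\Bigr)\|W\|_{R'+1},
\end{align*}
with the supremum finite. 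Hence both conditions of (\ref{simple_contractive_data}) hold whenever $\|W\|_{R'+1,\sigma}<C$ for a suitable $C=C(R,R',N)>0$.

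Proposition~\ref{g_exists} then produces $g=g^*\in\mathscr{P}_{c.s.}^{(R'',\sigma)}\subset\mathscr{P}_{c.s.}^{(R,\sigma)}$ satisfying (\ref{anti-cyclic_derivative}), with $\|g\|_{R'',\sigma}\to 0$ as $\|W\|_{R'+1,\sigma}\to 0$ and depending analytically on $W$. Set $f:=\D g$. Writing $g=\Sigma\hat g$ as in the proposition, Lemma~\ref{cyclic_derivative_bounded} yields $f=\D\Sigma\hat g\in(\mathscr{P}^{(R)})^N$ with $\|f\|_R\leq\|\hat g\|_{R,\sigma}/R\to 0$. To check that $f$ solves (\ref{S-D_Y_2}), apply $\D$ to both sides of (\ref{anti-cyclic_derivative}): by Lemmas~\ref{centralizer_commutes_with_A} and~\ref{dot_product_in_centralizer}, together with the $\sigma_{-i}$-invariance of $W(X+\D g)$ recorded in the proof of Corollary~\ref{F}, the arguments of the two $\mathscr{S}\Pi$'s lie in $\mathscr{P}_\varphi^{(R,\sigma)}$, so Lemma~\ref{D_of_S} erases them; identity (\ref{dot_product_to_vector_product}) converts $\D\bigl(\tfrac{1}{4}\{(1+A)\#\D g\}\#\D g\bigr)$ into $\J_\sigma\D g\#(\J_\sigma X)^{-1}\#\D g$, leaving exactly (\ref{S-D_Y_3}). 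By Lemma~\ref{S-D_Y_2=3} this is equivalent to (\ref{S-D_Y_2}) as soon as $\|\J\D g\|_{R\otimes_\pi R}<1$; iterating the same derivative estimate twice---from $g$ at radius $R''$ to $\D g$ at some $S\in(R,R'')$, then from $\D g$ at $S$ to $\J\D g$ at radius $R$---bounds $\|\J\D g\|_{R\otimes_\pi R}$ by a constant (depending only on $R$ and $R''$) times $\|g\|_{R'',\sigma}$, which we force below $1$ by shrinking $C$ if necessary. Analyticity of $\beta\mapsto f_{W_\beta}$ is inherited from Proposition~\ref{g_exists}(v) through the continuous linear map $g\mapsto\D g$.

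The main obstacle is purely one of radius bookkeeping: each first-order differentiation consumes a sliver of radius (via the factor of $n$ on a degree-$n$ monomial), and two such slivers are needed, one for $\delta_j W$ and one for $\J\D g$. The extra unit of radius in the hypothesis on $W$ together with the strict inequality $R'>R$ provide exactly the room required.
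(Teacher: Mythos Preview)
Your proof is correct and follows essentially the same route as the paper's: choose an intermediate radius between $R$ and $R'$, invoke Proposition~\ref{g_exists} there with $\rho=1$, then differentiate (\ref{anti-cyclic_derivative}) via Lemma~\ref{D_of_S} and (\ref{dot_product_to_vector_product}) to obtain (\ref{S-D_Y_3}), controlling $\|\J\D g\|_{R\otimes_\pi R}$ by shrinking $C$. The only cosmetic difference is that the paper bounds $\|\J\D g\|_{R\otimes_\pi R}$ in a single step using a constant $c'(R,S)=\sup_{\alpha\geq 1}\alpha^2 R^{-2}(S/R)^{-\alpha}$ (absorbing both derivatives at once), whereas you split it into two first-order steps through a further intermediate radius; both are equivalent.
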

\begin{proof}
Fix $S\in (R,R')$. Using the bounds in the proof of Theorem 3.15 in \cite{SG11} we have
	\begin{align*}
		\sum_{j=1}^N\|\delta_j(W)\|_{(S+1)\otimes_\pi (S+1)} \leq c(S+1,R'+1)\|W\|_{R'+1},
	\end{align*}
where
	\begin{align*}
		c(S,R)=\sup_{\alpha\geq 1} \alpha S^{-1}(R/S)^{-\alpha}.
	\end{align*}
Also, $S<R'+1$ implies $\|W\|_{S,\sigma}\leq \|W\|_{R'+1,\sigma}$. Hence, by choosing $C>0$ sufficiently small, $\|W\|_{R'+1,\sigma}<C$ will imply the hypothesis of Proposition \ref{g_exists} are satisfied with $\rho=1$ and $R$ replaced with $S$. Thus there exists $g=g^*\in \mathscr{P}_{c.s.}^{(S,\sigma)}$ satisfying (\ref{anti-cyclic_derivative}). Let $f=\D g$, then from Lemma \ref{cyclic_derivative_bounded} we know $f \in \left(\mathscr{P}^{(R)}\right)^N$. Also, using the bounds from the proof of Theorem 3.15 in \cite{SG11} again we have
	\begin{align*}
		\| \J f\|_{R\otimes_\pi R} \leq c'(R,S)\|g\|_{S}=c'(R,S)\|g\|_{S,\sigma},
	\end{align*}
where
	\begin{align*}
		c'(R,S)=\sup_{\alpha\geq 1} \alpha^2 R^{-2}(S/R)^{-\alpha}.
	\end{align*}
Hence by the proof of Proposition \ref{g_exists}.(v) we can (by possibly choosing a smaller $C$) assume $\|\J f\|_{R\otimes_\pi R}<1$. Also, it is clear that $g\in \mathscr{P}_{c.s.}^{(R,\sigma)}\supset \mathscr{P}_{c.s.}^{(S,\sigma)}$.\par
Recall from Lemma \ref{D_of_S} that $\D\mathscr{S}\Pi=\D$ on $\mathscr{P}_\varphi^{(S,\sigma)}$. Hence applying $\D$ to both sides of (\ref{anti-cyclic_derivative}) yields
	\begin{align*}
		\D\left\{ \left[ (\varphi\otimes 1)\circ\text{Tr}_{A^{-1}} \right.\right.+&\left.\left. (1\otimes\varphi)\circ \text{Tr}_A\right](\J\D g) - \mathscr{N} g\right\} \\
										&= \D(W(X+\D g))+\D Q(g)+ \D\left(\frac{1}{4}\left\{(1+A)\# \D g\right\}\# \D g\right).
	\end{align*}
The final term is equivalent to
	\begin{align*}
		\D\left(\frac{1}{4}\left\{(1+A)\# \D g\right\}\# \D g\right) = \D\left(\frac{1}{2} \J_\sigma X^{-1}\# f\# f\right) = \J f\# f=\J_\sigma f\#\J_\sigma X^{-1}\# f,
	\end{align*}
where we have used (\ref{dot_product_to_vector_product}). Thus $f=\D g$ satisfies Equation (\ref{S-D_Y_3}) which, according to Lemma \ref{S-D_Y_2=3} is equivalent to Equation (\ref{S-D_Y_2}).\par
The final statements follow from Lemma \ref{cyclic_derivative_bounded} and Proposition \ref{g_exists}.(v).
\end{proof}

%	Summary of results
%%%%%%%%%%%%

\subsection{Summary of results.}\label{summary}

We aggregate the results of this section in the following theorem.

\begin{thm}\label{thm_summary}
Let $(M,\varphi)=(M_0,\varphi_{V_0})$ be a free Araki-Woods factor with free quasi-free state $\varphi$ corresponding $A$, and generators $X_1,\ldots, X_N\in M$ so that the matrix form of $A$ with respect to the basis $\{X_j\Omega\}_{j=1}^N$ is given by (\ref{matrix_form_A}) and (\ref{matrix_form_A_2}). Let $R'>R\geq4\sqrt{\|A\|}$. Then there exists a constant $C>0$ depending only on $R$, $R'$, and $N$ so that whenever $W=W^*\in\mathscr{P}_{c.s.}^{(R'+1,\sigma)}$ satisfies $\|W\|_{R'+1,\sigma}<C$, there exists $G\in \mathscr{P}_{c.s.}^{(R,\sigma)}$ so that
	\begin{align*}
		(Y_1,\ldots, Y_N)=(\D_1G,\cdots, \D_NG) \in \mathscr{P}^{(R)}
	\end{align*}
has the law $\varphi_V$, $V=\frac{1}{2}\sum_{j,k=1}^N \left[\frac{1+A}{2}\right]_{jk}X_kX_j + W$, which is the unique free Gibbs state with potential $V$.\par
If $R'>R\|A\|^\frac{1}{4}$ then the transport can be taken to be monotone: $(\sigma_\frac{i}{2}\otimes 1)(\J_\sigma \D G) \geq 0$ as an operator on $L^2(\mathscr{P}\otimes\mathscr{P}^{op},\varphi\otimes\varphi^{op})^N$.\par
In particular, there are state-preserving injections $C^*(\varphi_V)\subset C^*(\varphi_{V_0})$ and $W^*(\varphi_V)\subset W^*(\varphi_{V_0})$.\par
If the map $\beta\mapsto W_\beta$ is analytic, then $Y_1,\ldots, Y_n$ are also analytic in $\beta$. Furthermore, $\|Y_j-X_j\|_{R}$ vanishes as $\|W\|_{R'+1,\sigma}$ goes to zero.
\end{thm}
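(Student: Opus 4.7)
The plan is to assemble the pieces developed in Section~\ref{construction}; the core inputs are Theorem~\ref{existence_of_f} and the uniqueness result Theorem~\ref{Gibbs_state_unique}.

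First, I would apply Theorem~\ref{existence_of_f} to produce a self-adjoint $g\in\mathscr{P}_{c.s.}^{(R,\sigma)}$ such that $f:=\D g$ solves Equation~(\ref{S-D_Y_2}). Setting $G:=V_0+g$ and $Y:=\D G = X+f$, I would trace back through Lemma~\ref{S-D_Y_2=3}, Lemma~\ref{K_1}, and Lemma~\ref{change_of_variables}(ii)--(iii) to conclude that $(\J_\sigma)_Y^*(1) = Y + (\D W)(Y)$, which is precisely the Schwinger--Dyson equation with potential $V=V_0+W$. To identify the induced law $\varphi_Y$ with $\varphi_V$ I would invoke Theorem~\ref{Gibbs_state_unique}: Proposition~\ref{g_exists}(v) makes $\|g\|_{R,\sigma}$ arbitrarily small when $\|W\|_{R'+1,\sigma}<C$ is sufficiently small, and Lemma~\ref{cyclic_derivative_bounded} gives $\|f\|_R\leq R^{-1}\|g\|_{R,\sigma}$, so $\|Y_j\|\leq \|X_j\|+\|f_j\|_R\leq 3$ and the growth bound~(\ref{phi_monomial0}) holds. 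Uniqueness then forces $\varphi_Y=\varphi_V$, and the state-preserving inclusions of the corresponding $C^*$- and $W^*$-algebras follow immediately from Definition~\ref{transport}.

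Next, for monotonicity I would decompose $T:=(\sigma_{i/2}\otimes 1)(\J_\sigma\D G) = T_0 + E$ into the unperturbed scalar-matrix part $T_0 := (\sigma_{i/2}\otimes 1)(\J_\sigma X) = \tfrac{2}{1+A}\cdot (1\otimes 1)$ and the perturbation $E := (\sigma_{i/2}\otimes 1)(\J_\sigma f)$. Self-adjointness of $T$ follows from Lemma~\ref{change_of_variables}(iii), since the identity $(\J_\sigma Y)^*=(\sigma_i\otimes 1)(\J_\sigma Y)$ immediately gives $T^*=T$; and as a multiplication operator on $L^2(\mathscr{P}\otimes\mathscr{P}^{op},\varphi\otimes\varphi^{op})^N$, $T_0$ is bounded below by $\tfrac{2}{1+\|A\|}>0$. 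Applying Lemma~\ref{change_of_variables}(iv) with $s=-\tfrac12$, I would rewrite
\begin{equation*}
  E = (1\otimes\sigma_{-i/2})\bigl(A^{-1/2}\#\J_\sigma\D g\#A^{1/2}\bigr)
\end{equation*}
and estimate its operator norm by a constant multiple of $\|A\|^{1/2}\,\|g\|_{S,\sigma}$ for some auxiliary $S\in(R,R')$, invoking the projective-tensor-norm bound and the continuity of $\D$ from Lemma~\ref{cyclic_derivative_bounded}. The hypothesis $R'>R\|A\|^{1/4}$ provides exactly the slack needed to absorb the factors of $\|A\|^{1/4}$ introduced by the modular twist in passing between the two radii; further shrinking $C$ then forces $\|E\|<\tfrac{2}{1+\|A\|}$, yielding $T\geq 0$.

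Finally, analyticity of $\beta\mapsto Y(\beta)$ when $\beta\mapsto W_\beta$ is analytic, and the vanishing $\|Y_j-X_j\|_R = \|f_j\|_R\to 0$ as $\|W\|_{R'+1,\sigma}\to 0$, follow directly from the analogous statements for $g$ in Theorem~\ref{existence_of_f} and Proposition~\ref{g_exists}(v), combined with Lemma~\ref{cyclic_derivative_bounded}. The main obstacle is the monotonicity step: the modular automorphism $\sigma_{i/2}$ does not interact cleanly with either $\|\cdot\|_{R,\sigma}$ or $\|\cdot\|_{R\otimes_\pi R}$, so obtaining a usable operator-norm estimate on the twisted Jacobian---spending the quantitative gap between $R$ and $R'$ on the modular cost---is where the essential technical work resides.
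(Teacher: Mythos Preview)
Your overall strategy matches the paper's proof: invoke Theorem~\ref{existence_of_f} to produce $g$, verify the growth bound so Theorem~\ref{Gibbs_state_unique} applies, and deduce the embeddings; the analyticity and convergence statements come straight from Proposition~\ref{g_exists}(v) and Lemma~\ref{cyclic_derivative_bounded}, exactly as you say.

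The one substantive difference is in the monotonicity estimate. The paper does \emph{not} use your asymmetric factorization $E=(1\otimes\sigma_{-i/2})(A^{-1/2}\#\J_\sigma\D g\#A^{1/2})$; instead it splits the modular twist symmetrically,
\[
(\sigma_{i/2}\otimes 1)(\J_\sigma f)\;=\;A^{1/4}\#(\sigma_{i/4}\otimes\sigma_{-i/4})(\J_\sigma f)\#A^{-1/4},
\]
so that each of the two tensor legs and each of the two matrix conjugations contributes a factor of $\|A\|^{1/4}$. This lets one bound $\|(\sigma_{i/4}\otimes\sigma_{-i/4})(\J f)\|_{R\otimes_\pi R}\le\|\J f\|_{S'\otimes_\pi S'}$ with $S'=\|A\|^{1/4}R$, and the hypothesis $R'>R\|A\|^{1/4}$ is exactly what is needed to choose $S\in(S',R')$ in the proof of Theorem~\ref{existence_of_f}. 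Your asymmetric split loads the entire modular cost $\|A\|^{1/2}$ onto one leg; since $\|\cdot\|_{R\otimes_\pi R}$ uses the same radius on both factors, the resulting estimate would require $\|A\|^{1/2}R<R'$, a strictly stronger hypothesis than the one stated. So your approach is correct in spirit but, as written, proves monotonicity only under $R'>R\|A\|^{1/2}$; recovering the sharp $\|A\|^{1/4}$ threshold forces you back to the symmetric decomposition the paper uses.

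Incidentally, your final positivity step is cleaner than the paper's: you compare $\|E\|$ to $\lambda_{\min}(\J_\sigma X)=\tfrac{2}{1+\|A\|}$, which is what is actually needed, whereas the paper compares to $\|\J_\sigma X\|_{R\otimes_\pi R}$ (harmless in the end, since one can always shrink $C$ further, but your formulation is the right one).
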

\begin{proof}
Note for $Y_j=X_j+f_j$ we have $\|Y_j\|\leq 2 +\|f_j\|_R$. By requiring $C$ be small enough so that $\|f_j\|_R\leq 1$, we have that
	\begin{align*}
		|\varphi(Y_{\ul{j}})|\leq 3^{|\ul{j}|}.
	\end{align*}
So by Theorem \ref{Gibbs_state_unique}, and further shrinking $C$ if necessary, we see that $\varphi_Y$ is the unique free Gibbs state with potential potential $V$.
The only remaining part of this theorem not covered by Theorem \ref{existence_of_f} is the positivity of $(\sigma_{i/2}\otimes 1)(\J_\sigma f)$, so we merely verify this condition when $R\rq{}>R\|A\|^\frac{1}{4}$.\par
Recall from Lemma \ref{change_of_variables}.(iv),
	\begin{align*}
		(\sigma_\frac{i}{2}\otimes 1)(\J_\sigma f) = A^\frac{1}{4}\#(\sigma_\frac{i}{4}\otimes \sigma_{-\frac{i}{4}})(\J_\sigma f)\# A^{-\frac{1}{4}}.
	\end{align*}
Hence if $S\rq{}=\|A\|^\frac{1}{4} R$ then
	\begin{align*}
		\| (\sigma_\frac{i}{2}\otimes 1)(\J_\sigma f)\|_{R\otimes_\pi R} &\leq \|A^\frac{1}{4}\|^2  \| (\sigma_\frac{i}{4}\otimes \sigma_{-\frac{i}{4}})(\J f)\|_{R\otimes_\pi R} \|\J_\sigma X\|_{R\otimes_\pi R}\\
				& \leq \|A^\frac{1}{4}\|^2 \| \J f\|_{S\rq{}\otimes_\pi S\rq{}} \|\J_\sigma X\|_{R\otimes_\pi R}.
	\end{align*}
Thus in the proof of Theorem \ref{existence_of_f} we can choose $S\in (S\rq{}, R\rq{})$ so that $\|\J f\|_{S\rq{}\otimes_\pi S\rq{}} \leq c\rq{}(S\rq{},S)\|g\|_{S,\sigma}$. In particular, we can make $\|\J f\|_{S\rq{}\otimes_\pi S\rq{}} < \|A^\frac{1}{4}\|^{-2}$ so that
	\begin{align*}
		\|(\sigma_\frac{i}{2}\otimes 1)(\J_\sigma f)\|_{R\otimes_\pi R} < \|\J_\sigma X\|_{R\otimes_\pi R}.
	\end{align*}
Noting that $(\sigma_\frac{i}{2}\otimes 1)(\J_\sigma Y)=\J_\sigma X+ (\sigma_\frac{i}{2}\otimes 1)(\J_\sigma f)$, $\J_\sigma X\geq 0$, and $(\sigma_\frac{i}{2}\otimes 1)(\J_\sigma f)^*=(\sigma_\frac{i}{2}\otimes 1)(\J_\sigma f)$ (via Lemma \ref{change_of_variables}.(iii)) we have that $(\sigma_\frac{i}{2}\otimes 1)(\J_\sigma Y)\geq 0$.
\end{proof}

By shrinking the constant further if needed, we can use Lemma \ref{invertible_power_series} to turn the state-preserving injections into isomorphisms:

\begin{cor}\label{iso_cor}
Let $(M,\varphi)=(M_0,\varphi_{V_0})$ be a free Araki-Woods factor with free quasi-free state $\varphi$ corresponding $A$, and generators $X_1,\ldots, X_N\in M$ so that the matrix form of $A$ with respect to the basis $\{X_j\Omega\}_{j=1}^N$ is given by (\ref{matrix_form_A}) and (\ref{matrix_form_A_2}). Let $R'>R\geq4\sqrt{\|A\|}$. Then there exists $C>0$ depending only on $R$, $R'$, and $N$ so that whenever $W=W^*\in\mathscr{P}_{c.s.}^{(R'+1,\sigma)}$ satisfies $\|W\|_{R'+1,\sigma}<C$, there exists $G\in \mathscr{P}_{c.s.}^{(R,\sigma)}$ so that:
	\begin{enumerate}
	\item[(1)] if we set $Y_j=\D_jG$, then $Y_1,\ldots, Y_N\in \mathscr{P}^{(R)}$ has law $\varphi_V$, with $V=\frac{1}{2}\sum_{j,k=1}^N \left[\frac{1+A}{2}\right]_{jk}X_kX_j + W$;
	
	\item[(2)] $X_j=H_j(Y_1,\ldots, Y_N)$ for some $H_j\in \mathscr{P}^{(R)}$; and
	
	\item[(3)] if $R'>R\|A\|^\frac{1}{4}$ then $(\sigma_\frac{i}{2}\otimes 1)(\J_\sigma \D G)\geq 0$ as an operator on $L^2(\mathscr{P}\otimes\mathscr{P}^{op})^N$.
	\end{enumerate}
	
In particular there are state-preserving isomorphisms
	\begin{align*}
		C^*(\varphi_V)\cong \Gamma(\H_\R, U_t),\qquad W^*(\varphi_V)\cong \Gamma(\H_\R, U_t)''.
	\end{align*}
\end{cor}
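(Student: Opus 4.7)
The plan is straightforward: parts (1) and (3) are Theorem \ref{thm_summary} restated, so only part (2) and the isomorphism statement require genuine work, and both reduce to a careful application of Lemma \ref{invertible_power_series} followed by an algebraic observation.

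For (2), I would apply Lemma \ref{invertible_power_series} to the decomposition $Y = X + f$, where $f = \D g$ and $g \in \mathscr{P}_{c.s.}^{(R,\sigma)}$ is the self-adjoint element produced by Theorem \ref{thm_summary}. To satisfy the lemma's hypothesis $f \in (\mathscr{P}^{(S)})^N$ with $S > R$, I would observe that the proof of Theorem \ref{existence_of_f} actually places $g$ in $\mathscr{P}_{c.s.}^{(S',\sigma)}$ for some $S' \in (R, R'+1)$; Lemma \ref{cyclic_derivative_bounded}, applied at an intermediate value $S \in (R, S')$, then gives $f \in (\mathscr{P}^{(S)})^N$ with $\|f\|_S \leq C(S'/S)\|g\|_{S',\sigma}$. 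Since Proposition \ref{g_exists}.(v) ensures $\|g\|_{S',\sigma} \to 0$ as $\|W\|_{R'+1,\sigma} \to 0$, further shrinking the constant $C$ would let me simultaneously force $\|f\|_S$ below the threshold appearing in Lemma \ref{invertible_power_series} and guarantee $\|Y\|_R \leq R + \|f\|_R < S$. The lemma then returns $H \in (\mathscr{P}^{(R)})^N$ with $X = H(Y)$, which is exactly (2).

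With (2) in hand the isomorphism conclusion is essentially automatic. The $N$-tuple $(Y_1,\ldots,Y_N) \subset M$ has law $\varphi_V$ by (1), so it generates a von Neumann subalgebra of $M$ that is state-preservingly isomorphic to $W^*(\varphi_V)$ via some $\iota$. The relation $X_j = H_j(Y)$ shows each $X_j$ lies in $W^*(Y_1,\ldots,Y_N)$, whence $W^*(Y_1,\ldots,Y_N) = W^*(X_1,\ldots,X_N) = \Gamma(\H_\R,U_t)''$, so $\iota$ is actually an isomorphism onto all of $M$. For the $C^*$-version I would invoke the embedding $\mathscr{P}^{(R)} \subset M$ already established in Subsection \ref{setup}: for $R \geq 2/(1-|q|) = 2$ the norm $\|\cdot\|_R$ dominates the operator norm, so both $Y_j$ and $X_j = H_j(Y)$ live in $C^*(X_1,\ldots,X_N) = \Gamma(\H_\R,U_t)$, and the same containment argument yields $C^*(\varphi_V) \cong \Gamma(\H_\R,U_t)$. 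I do not expect any single step to present a real obstacle; the only genuine care needed is bookkeeping the successive shrinkings of $C$ so that one constant works simultaneously for Theorem \ref{thm_summary}, for the $\|f\|_S$ smallness needed in Lemma \ref{invertible_power_series}, and for the auxiliary bound $\|Y\|_R < S$.
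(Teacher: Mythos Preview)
Your proposal is correct and follows essentially the same route as the paper: invoke Theorem \ref{thm_summary} for (1) and (3), then upgrade $f = \D g$ to $(\mathscr{P}^{(S)})^N$ for some intermediate $S \in (R,R')$ via Lemma \ref{cyclic_derivative_bounded}, shrink $C$ so that Lemma \ref{invertible_power_series} applies, and deduce the isomorphisms from the resulting mutual containment. Your account is in fact more explicit than the paper's about why the $C^*$-isomorphism follows (via $\|\cdot\|_R$ dominating the operator norm), but the underlying argument is identical.
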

\begin{proof}
By Theorem \ref{thm_summary}, it suffices to show the existence of $H=(H_1,\ldots, H_N)\in (\mathscr{P}^{(R)})^N$. From Theorem \ref{existence_of_f}, we know that $Y=X+f(X)$, and that $\|f\|_R\rightarrow 0$ as $\|W\|_{R'+1,\sigma}\rightarrow 0$. In fact, from Lemma \ref{cyclic_derivative_bounded} we know that $f\in (\mathscr{P}^{(S)})^N$ for any $S\in (R,R')$, and $\|f\|_S$ still tends to zero. Set $S=(R+R')/2$, then by shrinking the constant $C$ in the statement of the corollary further if necessary, we may assume that hypothesis of Lemma \ref{invertible_power_series} are satisfied. Thus we obtain the desired inverse mapping $H(Y)=X$.
\end{proof}

%%%%%%%%%%%%%%%%%%%%%%%%%%%%%%
%	Application to the $q$-deformed Araki-Woods algebras		%
%%%%%%%%%%%%%%%%%%%%%%%%%%%%%%

\section{Application to the $q$-deformed Araki-Woods algebras}\label{application}

We saw in Theorem \ref{free_Gibbs_is_free_quasi-free} that $\varphi_0$ is the free Gibbs state with potential
	\begin{align*}
		V_0=\frac{1}{2}\sum_{j,k=1}^N \left[\frac{1+A}{2}\right]_{jk}X_k^{(0)}X_j^{(0)}.
	\end{align*}
In this section we will show that for small $|q|$, $\varphi_q$ is the free Gibbs state with potential 
	\begin{align*}
		V=\frac{1}{2}\sum_{j,k=1}^N \left[\frac{1+A}{2}\right]_{jk}X_k^{(q)}X_j^{(q)}+W\in\mathscr{P}_{c..s.}^{(R,\sigma)},
	\end{align*}
and that $\|W\|_{R,\sigma}\rightarrow 0 $ as $|q|\rightarrow 0$. Hence it will follow from Corollary \ref{iso_cor} that $M_q\cong M_0$ for sufficiently small $|q|$. We now let $M=M_q$ for arbitrary (but fixed) $q\in (-1,1)$, with the usual notational simplifications.

%	Invertibility of $\Xi_q$
%%%%%%%%%%%%%%

\subsection{Invertibility of $\Xi_q$}

Let $\Psi\colon M\Omega\rightarrow M$ be the inverse of canonical embedding of $M$ into $\mc{F}_q(\H)$ via $x\mapsto x\Omega$ for $x\in M$, which we note is injective from the fact that $\Omega$ is separating. Hence for $\xi\in M\Omega$ we have that $\Psi(\xi)$ is the unique element in $M$ such that $\Psi(\xi)\Omega=\xi$. The uniqueness then implies the complex linearity of $\Psi$:  $\Psi(\sum_i \alpha_i\xi_i)=\sum_i \alpha_i\Psi(\xi_i)$. We also note that by the formulas (\ref{Tomita-Takesaki_formulas}) we have
	\begin{align}\label{Psi_delta}
		\Psi(S\xi)\Omega&=S\xi=S(\Psi(\xi)\Omega)=\Psi(\xi)^*\Omega;\qquad\text{ and}\notag\\
		\Psi(\Delta^{iz}\xi)\Omega &= \Delta^{iz}\xi = \Delta^{iz}\Psi(\xi)\Delta^{-iz}\Omega = \sigma_z(\Psi(\xi))\Omega,
	\end{align}
so that the uniqueness implies $\Psi(S\xi)=\Psi(\xi)^*$ and $\Psi(\Delta^{iz}\xi) = \sigma_z(\Psi(\xi))$.\par
Recall that $\Xi_q=\sum q^n P_n$, where $P_n\in HS(\mc{F}_q(\H))$ is the projection onto tensors of length $n$. We claim that (\ref{Psi_delta}) implies each $P_n$, when identified with an element in $L^2(M\bar{\otimes}M^{op},\varphi\otimes\varphi^{op})$, is fixed by $\sigma_{it}\otimes\sigma_{it}$ for all $t\in\R$. Indeed, fix $t\in\R$ and let $\{\xi_{\ul{i}}\}_{|\ul{i}|=n}$ be an orthonormal basis for $\H^{\otimes n}$. Then $P_n$ is identified with $\sum_{|\ul{i}|=n} \Psi(\xi_{\ul{i}})\otimes \Psi(\xi_{\ul{i}})^*$ since for $\eta\in\mc{F}_q(\H)$
	\begin{align*}
		\sum_{|\ul{i}|=n} \< \Psi(\xi_{\ul{i}})\Omega, \eta\>_{U,q} \Psi(\xi_{\ul{i}})\Omega = \sum_{|\ul{i}|=n} \<\xi_{\ul{i}},\eta\>_{U,q}\xi_{\ul{i}} = P_n\eta.
	\end{align*}
Now, using (\ref{Psi_delta}), we see that
	\begin{align*}
		(\sigma_{it}\otimes\sigma_{it})(P_n)= \sum_{|\ul{i}|=n} \Psi(\Delta^{t}\xi_{\ul{i}})\otimes \Psi(\Delta^{-t}\xi_{\ul{i}})^* = \sum_{|\ul{i}|=n} \Psi\left( \left(A^{-t}\right)^{\otimes n}\xi_{\ul{i}}\right) \otimes \Psi\left( \left( A^t\right)^{\otimes n}\xi_{\ul{i}}\right)^*.
	\end{align*}
Let $Q_n\in HS(\mc{F}_q(\H))$ be the element associated with $(\sigma_{it}\otimes\sigma_{it})(P_n)$. That is, for $\eta\in\mc{F}_q(\H)$ we have
	\begin{align*}
		Q_n\eta= \sum_{|\ul{i}|=n} \< \left(A^{t}\right)^{\otimes n}\xi_{\ul{i}},\eta\>_{U,q} \left(A^{-t}\right)^{\otimes n}\xi_{\ul{i}},
	\end{align*}
and so
	\begin{align*}
		\<\left(A^{t}\right)^{\otimes n}\xi_{\ul{j}}, Q_n\eta\>_{U,q} &= \sum_{|\ul{i}|=n} \< \left(A^{t}\right)^{\otimes n}\xi_{\ul{i}},\eta\>_{U,q} \< \left(A^{t}\right)^{\otimes n}\xi_{\ul{j}}, \left(A^{-t}\right)^{\otimes n}\xi_{\ul{i}}\>_{U,q}\\
					&=\sum_{|\ul{i}|=n} \< \left(A^{t}\right)^{\otimes n}\xi_{\ul{i}},\eta\>_{U,q} \<\xi_{\ul{j}}, \xi_{\ul{i}}\>_{U,q} = \< \left(A^{t}\right)^{\otimes n}\xi_{\ul{j}},\eta\>_{U,q}=\< \left(A^{t}\right)^{\otimes n}\xi_{\ul{j}},P_n\eta\>_{U,q}.
	\end{align*}
From Lemma 1.2 of \cite{H}, $A^t>0$ implies $\left(A^t\right)^{\otimes n}>0$. Thus $\left\{ \left(A^{t}\right)^{\otimes n}\xi_{\ul{i}}\right\}_{|\ul{i}|=n}$ is a basis for $\H^{\otimes n}$ and hence $P_n=Q_n=(\sigma_{it}\otimes \sigma_{it})(P_n)$ as claimed.\par
It follows that for any $t\in\R$ we have $(\sigma_{it}\otimes\sigma_{it})(\Xi_q)=\Xi_q$, and more generally 
	\begin{align}\label{Xi_q_sigma_invariant}
		(\sigma_{it}\otimes\sigma_{is})(\Xi_q)=(\sigma_{i(t-s)}\otimes 1)(\Xi_q)=(1\otimes\sigma_{i(s-t)})(\Xi_q)\qquad \forall t,s\in\R.
	\end{align}\par
We remind the reader that the norm $\|\cdot\|_{R\otimes_\pi R}$ is defined in Section \ref{projective_tensor_norm}. Denote the closure of $\mathscr{P}\otimes\mathscr{P}^{op}$ with respect to this norm by $\left(\mathscr{P}\otimes\mathscr{P}^{op}\right)^{(R)}$. We now prove an estimate analogous to those in Corollary 29 in \cite{D} for the non-tracial case.

\begin{prop}\label{Xi_invertible}
Let $R=\left(1+\frac{c}{2}\right)\frac{2}{1-|q|}>\|X_i\|$ for some $c>0$. Fix $t_0\in\R$, then for sufficiently small $|q|$ and all $|t|\leq |t_0|$, $(\sigma_{it}\otimes 1)(\Xi_q)\in \left(\mathscr{P}\otimes\mathscr{P}^{op}\right)^{(R)}$ with
	\begin{align*}
		\| (\sigma_{it}\otimes1)(\Xi_q) - 1\|_{R\otimes_\pi R} \leq \frac{\|A^{t}\|(3+c)^2(1+\|A\|)N^2|q|}{2- \left(4+ \|A^{t}\|(3+c)^2(1+\|A\|)N^2\right)|q|}=:\pi(q,N,A,t).
	\end{align*}
Moreover, $\pi(q,N,A,t)\rightarrow 0$ as $|q|\rightarrow 0$ and $\pi(q,N,A,s)\leq\pi(q,N,A,t)$ for $|s|\leq |t|$. Finally, for $\pi(q,N,A,t_0)<1$ and $|t|\leq |t_0|$, $(\sigma_{it}\otimes 1)(\Xi_q)$ is invertible with $(\sigma_{it}\otimes 1)(\Xi_q)^{-1}=(\sigma_{it}\otimes 1)(\Xi_q^{-1})\in\left(\mathscr{P}\otimes\mathscr{P}^{op}\right)^{(R)}$ and
	\begin{align*}
		\left\| (\sigma_{it}\otimes 1)(\Xi_q^{-1}) -1 \right\|_{R\otimes_\pi R} \leq \frac{\pi(q,N,A,t)}{1-\pi(q,N,A,t)}\longrightarrow 0\	\text{ as }|q|\rightarrow 0.
	\end{align*}
\end{prop}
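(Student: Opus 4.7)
Since $(\sigma_{it}\otimes \sigma_{it})(\Xi_q)=\Xi_q$ by (\ref{Xi_q_sigma_invariant}) and $P_0$ corresponds to $1\otimes 1$, my plan is to bound each term in
\[
(\sigma_{it}\otimes 1)(\Xi_q)-1 \;=\; \sum_{n\ge 1} q^n (\sigma_{it}\otimes 1)(P_n)
\]
and sum the resulting series. Writing $\beta:=\|A^t\|(3+c)^2(1+\|A\|)N^2$, I expect to establish the transfer-matrix estimate
\[
\|(\sigma_{it}\otimes 1)(P_n)\|_{R\otimes_\pi R}\;\le\; \tfrac{\beta}{2}\left(\tfrac{4+\beta}{2}\right)^{n-1},
\]
after which the geometric sum $\sum_{n\ge 1}|q|^n\cdot\tfrac{\beta}{2}\left(\tfrac{4+\beta}{2}\right)^{n-1}$ collapses to exactly the stated $\pi(q,N,A,t)$; the decay $\pi\to 0$ as $|q|\to 0$ is then visible from the explicit formula.

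To set up the bound on $P_n$, I would expand it in the (non-orthonormal) basis $\{e_{\underline{i}}\}_{|\underline{i}|=n}$ of $\H^{\otimes n}$. Letting $G_n$ be the Gram matrix of this family with respect to $\langle\cdot,\cdot\rangle_{U,q}$ and $\Psi$ the inverse of $x\mapsto x\Omega$, the Hilbert-Schmidt identification $a\otimes b^\circ\leftrightarrow|a\Omega\rangle\langle(b^*)\Omega|$ gives
\[
P_n=\sum_{|\underline{i}|=|\underline{j}|=n}(G_n^{-1})_{\underline{j}\underline{i}}\,\Psi(e_{\underline{i}})\otimes \Psi(e_{\underline{j}})^{*\circ},
\]
and applying $\sigma_{it}\otimes 1$ attaches the scalar action of $(A^t)^{\otimes n}$ to the left factor, via $\sigma_{it}(\Psi(\xi))=\Psi(\Delta^{-t}\xi)$ from (\ref{Psi_delta}) together with (\ref{Tomita-Takesaki_formulas}). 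The Wick recursion $\Psi(f\otimes\xi)=s_q(f)\Psi(\xi)-\Psi(l_q^*(f)\xi)$, verified from $s_q(f)\xi=f\otimes\xi+l_q^*(f)\xi$, allows each $\Psi(e_{\underline{k}})$ to be written inductively as a noncommutative polynomial in $X_1,\ldots,X_N$ of degree at most $n$. Property $4$ of $A$ applied to both $A^t$ and to the inverse Gram matrix (whose entries are comparable to those of $((1+A^{-1})/2)^{\otimes n}$) yields the tensor-power bounds
\[
\sum_{\underline{k}}|[(A^t)^{\otimes n}]_{\underline{k}\underline{i}}|\le\|A^t\|^n,\qquad \sum_{\underline{i},\underline{j}}|(G_n^{-1})_{\underline{i}\underline{j}}|\le\bigl(\tfrac{N(1+\|A\|)}{2}\bigr)^n,
\]
while the choice $R=(1+c/2)\cdot 2/(1-|q|)\ge\|X_j\|$ supplies the slack needed to absorb the subleading Wick contractions produced at each step of the recursion. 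A careful combination of these three ingredients yields the transfer-matrix growth stated above; monotonicity of $\pi(q,N,A,t)$ in $|t|$ is immediate from $\|A^s\|=\max_k \lambda_k^{|s|}$ (normalising the $\lambda_k\ge 1$) being monotone in $|s|$, using the spectral description of $A$ in (\ref{matrix_form_A}).

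For invertibility, once $\pi:=\pi(q,N,A,t_0)<1$, the element $D_t:=(\sigma_{it}\otimes 1)(\Xi_q)-1\in(\mathscr{P}\otimes\mathscr{P}^{op})^{(R)}$ satisfies $\|D_t\|_{R\otimes_\pi R}\le\pi<1$ for all $|t|\le|t_0|$. The Neumann series $\sum_{k\ge 0}(-D_t)^k$ then converges in this Banach algebra and yields the inverse, with the estimate $\|(\sigma_{it}\otimes 1)(\Xi_q)^{-1}-1\|_{R\otimes_\pi R}\le\pi/(1-\pi)$. The identification $(\sigma_{it}\otimes 1)(\Xi_q)^{-1}=(\sigma_{it}\otimes 1)(\Xi_q^{-1})$ follows by applying the $*$-automorphism $\sigma_{it}\otimes 1$ to the Neumann inverse of $\Xi_q$ itself (the case $t=0$, in which invertibility is available by the monotonicity of $\pi$) and invoking uniqueness of inverses. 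The main obstacle of the argument is the combinatorial bookkeeping of the Wick expansion: tracking the interplay between the $q^{k-1}\langle\cdot,\cdot\rangle_U$ contractions produced by each $l_q^*$, the entries of $G_n^{-1}$, and the excess slack in $R$ in order to extract the precise transfer-matrix recursion that yields the specific constants $(3+c)^2(1+\|A\|)N^2$ appearing in $\pi$.
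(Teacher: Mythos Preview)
Your overall plan---expand $\Xi_q-1=\sum_{n\ge 1}q^nP_n$, bound each summand in $\|\cdot\|_{R\otimes_\pi R}$, sum a geometric series, then invert via Neumann---is exactly the paper's route, and your treatment of the invertibility and monotonicity parts is fine. The gap is in the middle: your control of the inverse Gram matrix, and hence of $\|(\sigma_{it}\otimes 1)(P_n)\|_{R\otimes_\pi R}$, is not correct as stated.

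The Gram matrix $G_n$ of $\{e_{\ul{i}}\}$ in $\langle\cdot,\cdot\rangle_{U,q}$ is \emph{not} comparable to $((1+A)/2)^{\otimes n}$ except at $q=0$; for $q\neq 0$ it carries the full $q$-symmetrizer $P_q^{(n)}=\sum_{\pi\in S_n}q^{i(\pi)}T_\pi$, and this contributes an additional $q$-dependent factor to $G_n^{-1}$. Your claimed bound $\sum_{\ul{i},\ul{j}}|(G_n^{-1})_{\ul{i}\ul{j}}|\le (N(1+\|A\|)/2)^n$ drops this factor (and is also off by $N^n$). The paper resolves this by factoring the orthonormalization in two steps: a matrix $B$ with $B^2=\pi_{q,N,n}(P_q^{(n)-1})$ to undo the $q$-symmetrization (so that the $p_{\ul{i}}=\sum_{\ul{j}}B_{\ul{i}\ul{j}}\psi_{\ul{j}}$ satisfy $\langle p_{\ul{i}},p_{\ul{j}}\rangle_\varphi=\langle e_{\ul{i}},e_{\ul{j}}\rangle_{U,0}$), and a matrix $D$ with $D^2=(\alpha^{\otimes n})^{-1}$ to handle the residual $A$-inner product. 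Then $G_n^{-1}=BD^2B$, and one uses the crude entry-sum bound $\sum_{\ul{i},\ul{j}}|(G_n^{-1})_{\ul{i}\ul{j}}|\le N^{2n}\|B^2\|\,\|D^2\|$ together with the known estimate $\|B^2\|\le\bigl((1-|q|)\prod_k\frac{1+|q|^k}{1-|q|^k}\bigr)^n\le\bigl(\frac{(1-|q|)^2}{1-2|q|}\bigr)^n$ from \cite{D},\cite{S09} and $\|D^2\|\le((1+\|A\|)/2)^n$. Combined with your (correct) Wick bound $C_n(t)\le\|A^t\|^n((3+c)/(1-|q|))^n$, this gives the $q$-\emph{dependent} term bound
\[
\|(\sigma_{it}\otimes 1)(P_n)\|_{R\otimes_\pi R}\le\Bigl[\tfrac{\beta}{2(1-2|q|)}\Bigr]^n,
\]
which sums against $|q|^n$ to precisely $\pi(q,N,A,t)$. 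Your conjectured $q$-independent transfer-matrix form $\tfrac{\beta}{2}(\tfrac{4+\beta}{2})^{n-1}$ happens to sum to the same thing but does not arise from the actual estimates; in particular for $n=1$ it is strictly smaller than what the argument gives whenever $q\neq 0$, so it cannot be the output of the bounds you list. The missing ingredient is exactly the operator-norm control of $P_q^{(n)-1}$, which you should invoke rather than attempt to absorb into a heuristic ``transfer matrix''.
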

\begin{proof}
We first construct the operators $\Psi(\xi_{\ul{i}})=:r_{\ul{i}}$ from the remarks preceding the proposition (for a suitable orthonormal basis). However, in order to control their $\|\cdot\|_R$-norms we must build these operators out of $\{\Psi(e_{\ul{i}})\}$ since this latter set is easily expressed as polynomials in the $X_i$. Indeed, for a multi-index $\underline{j}=\{j_1,\ldots,j_n\}$ let $\psi_{\underline{j}}\in\mathscr{P}$ be the non-commutative polynomial defined inductively by
	\begin{align}\label{recursive_psi}
		\psi_{\ul{j}}=X_{j_1}\psi_{j_2,\ldots,j_n} - \sum_{k\geq 2} q^{k-2}\<e_{j_1},e_{j_k}\>_U\psi_{j_2,\ldots,\hat{j_k},\ldots,j_n},
	\end{align}
where $\psi_{\emptyset}=1$. From a simple computation it is clear that $\psi_{\underline{j}}=\Psi(e_{j_1}\otimes\cdots\otimes e_{j_n})$.\par
Fix $n\geq 0$,  then, following \cite{D}, we let $B=B^*\in M_{N^n}(\C)$ be the matrix such that $B^2=\pi_{q,N,n}\left(P_q^{(n)-1}\right)$. In other words, given $h_1,\ldots,h_n\in \H$ if we define $g_{\underline{i}}=\sum_{|\underline{j}|=n} B_{\underline{i},\underline{j}} h_{\underline{j}}$ then
	\begin{equation*}
		\< g_{\underline{i}}, g_{\underline{j}}\>_{U,q} = \<h_{\underline{i}}, h_{\underline{j}}\>_{U,0} = \prod_{k=1}^n \<h_{i_k},h_{j_k}\>_U.
	\end{equation*}
Define $p_{\underline{i}}=\sum_{|\underline{j}|=n} B_{\underline{i},\underline{j}}\psi_{\underline{j}}$. Then the $p_{\underline{i}}$ satisfy
	\begin{equation*}
		\<p_{\underline{i}},p_{\underline{j}}\>_{\varphi} =\<p_{\ul{i}}\Omega, p_{\ul{j}}\Omega\>_{U,q}= \<e_{\underline{i}},e_{\underline{j}}\>_{U,0}.
	\end{equation*}
Let $\alpha\in M_N(\C)$ have entries $\alpha_{ij}=\<e_j,e_i\>_U$, and recall that by a previous computation this implies $\alpha=\frac{2}{1+A}$. We note that the eigenvalues of $\alpha$ are contained in the interval $\left[ \frac{2}{1+\|A\|},\frac{2}{1+\|A\|^{-1}}\right]$. Lemma 1.2 in \cite{H} implies that $\alpha^{\otimes n}$ is strictly positive, so let $D=D^*\in M_{N^n}(\C)$ be such that $D^2=\left(\alpha^{\otimes n}\right)^{-1}$.We claim that $\|D^2\| \leq  \left(\frac{1+\|A\|}{2}\right)^{n}$. Indeed, it suffices to show that the eigenvalues of $\alpha^{\otimes n}$ are bounded below by $\left(\frac{2}{1+\|A\|}\right)^n$. Suppose $\lambda$ is an eigenvalue with eigenvector $h_1\otimes \cdots \otimes h_n\in \H_\R^{\otimes n}$. Upon renormalizing, we may assume $\|h_i\|=1$ for each $i$. Thus
	\begin{align*}
		\lambda= \< h_1\otimes \cdots \otimes h_n, \alpha^{\otimes n}h_1\otimes \cdots \otimes h_n\>_{1,0}=\prod_i \< h_i,\alpha h_i\> \geq \left(\frac{2}{1+\|A\|}\right)^n,
	\end{align*}
and the claim follows. Setting $r_{\underline{i}}=\sum_{\underline{k}} D_{\underline{i},\underline{k}} p_{\underline{k}}$ we have
	\begin{align*}
		\<r_{\underline{i}}, r_{\underline{j}}\>_{\varphi} &= \sum_{\underline{k},\underline{l}} \overline{D_{\underline{i},\underline{k}}}D_{\underline{j},\underline{l}} \<p_{\underline{k}},p_{\underline{l}}\>_{\varphi}=\sum_{\underline{k},\underline{l}} D_{\underline{k},\underline{i}} D_{\underline{j},\underline{l}} \< e_{\underline{k}},e_{\underline{l}}\>_{U,0}\\
			& = \sum_{\underline{k},\underline{l}} D_{\underline{k},\underline{i}} D_{\underline{j},\underline{l}} \< \left(\frac{2}{1+A^{-1}}\right)^{\otimes n} e_{\underline{k}}, e_{\underline{l}}\>_{1,0} = \sum_{\underline{k},\underline{l}} D_{\underline{j},\underline{l}} \left[\left(\frac{2}{1+A^{-1}}\right)^{\otimes n}\right]_{\underline{k},\underline{l}} D_{\underline{k},\underline{i}}\\
			&=\sum_{\underline{k},\underline{l}} D_{\underline{j},\underline{l}} \left[\alpha^{\otimes n}\right]_{\underline{l},\underline{k}} D_{\underline{k},\underline{i}}= [D\alpha^{\otimes n}D]_{\underline{j},\underline{i}}=\delta_{\underline{i}=\underline{j}}.
	\end{align*}
Noting that $r_{\ul{i}}$ is a linear combination of the $\psi_{\ul{j}}$ with $|\ul{j}|=n$, we see that $r_{\ul{i}}\Omega\in\H^{\otimes n}$. Hence $\{r_{\ul{i}}\Omega\}_{|\ul{i}|=n}$ is an orthonormal basis for $\H^{\otimes n}$ and $P_n$ can be identified with $\sum_{|\ul{i}|=n} r_{\ul{i}} \otimes r_{\ul{i}}^*\in\mathscr{P}\otimes\mathscr{P}^{op}$.\par
Repeat this construction for each $n\geq 0$ so that for a multi-index $\ul{i}$ of arbitrary length we have a corresponding $r_{\ul{i}}$ and consequently a representation of $P_n$ in $\mathscr{P}\otimes\mathscr{P}^{op}$ for every $n$. Then by definition we have $\Xi_q=\sum_{n\geq 0} q^n \sum_{|\underline{i}|=n} r_{\underline{i}}\otimes r_{\underline{i}}^*$, provided this sum converges. Let $C_n(t)=\sup_{|\ul{i}|=n}\|\sigma_{it}(\psi_{\ul{i}})\|_R$, then we have
	\begin{align*}
		\left\|\sum_{|\ul{i}|=n}  \sigma_{it}(r_{\ul{i}})\otimes r_{\ul{i}}^*\right\|_{R\otimes_\pi R}&\leq \sum_{\ul{i},\ul{j},\ul{k},\ul{l},\ul{m}} \left|D_{\ul{i},\ul{j}}B_{\ul{j},\ul{l}} \overline{D_{\ul{i},\ul{k}}}\overline{B_{\ul{k},\ul{m}}}\right| \left\| \sigma_{it}(\psi_{\ul{l}})\right\|_R \| \psi_{\ul{m}}\|_R\leq \sum_{\ul{m},\ul{l}} \left| (BD^2B)_{\ul{m},\ul{l}}\right| C_n(t)C_n(0) \\
			&\leq N^{2n} \|BD^2B\| C_n(t)C_n(0) \leq N^{2n}\left(\frac{1+\|A||}{2}\right)^n \|B^2\| C_n(t)C_n(0) \\
			& \leq N^{2n}\left(\frac{1+\|A\|}{2}\right)^n \left( (1-|q|)\prod_{k=1}^\infty \frac{1+|q|^k}{1-|q|^k}\right)^n C_n(t)C_n(0),
	\end{align*}
where we have used the bound on $\|B^2\|$ from \cite{D}. From Equation (\ref{recursive_psi}) and (\ref{differentiating_sigma_with_q}), $C_n(t)\leq \|A^{-t}X\|_RC_{n-1}(t) +C_{n-2}(t)/(1-|q|)$. But $\|A^{-t}X\|_R\leq \|A^{-t}\| R=\|A^t\| R$ (see property 4 of $A$ in section \ref{free_Araki-Woods}), so that $C_n(t)\leq \|A^{t}\|^n\left( R+\frac{1}{1-|q|}\right)^n=\|A^{t}\|^n\left(\frac{3+c}{1-|q|}\right)^n$. Also, we use the bound
	\begin{align*}
		(1-|q|)\prod_{k=1}^\infty \frac{1+|q|^k}{1-|q|^k}\leq \frac{(1-|q|)^2}{1-2|q|},
	\end{align*}
from Lemma 13 in \cite{S09}. Thus
	\begin{align*}
		\left\|\sum_{|\underline{i}|=n} \sigma_{it}( r_{\underline{i}})\otimes r_{\underline{i}}^*\right\|_{R\otimes_\pi R} &\leq N^{2n}\left(\frac{1+\|A\|}{2}\right)^n \left(\frac{(1-|q|)^2}{1-2|q|}\right)^n \|A^{t}\|^n\left(\frac{3+c}{1-|q|}\right)^{2n}\\
					&= \left[ \|A^{t}\|N^2 \frac{1+\|A\|}{2}\frac{(3+c)^2}{1-2|q|}\right]^n.
	\end{align*}
Thus choosing $|q|$ small enough so that
	\begin{align*}
		|q|\|A^{t_0}\|N^2 \frac{1+\|A\|}{2}\frac{(3+c)^2}{1-2|q|}<1,
	\end{align*}
we can use $\|A^t\|\leq \|A^{t_0}\|$ for $|t|\leq|t_0|$ to obtain
	\begin{align*}
		\left\|(\sigma_{it}\otimes 1)(\Xi_q)-1\otimes 1\right\|_{R\otimes_\pi R} &\leq \sum_{n=1}^\infty \left[|q|\|A^{t}\|N^2\frac{1+\|A\|}{2}\frac{(3+c)^2}{1-2|q|}\right]^n\\
													&=\frac{\left\|A^{t}\right\|(3+c)^2(1+\|A\|)N^2|q|}{2- \left(4+ \left\|A^{t}\right\|(3+c)^2(1+\|A\|)N^2\right)|q|}.
	\end{align*}
The limit $\pi(q,N,A,t)\rightarrow 0$ as $|q|\rightarrow 0$ is clear from the definition of $\pi(q,N,A,t)$, and the ordering $\pi(q,N,A,s)\leq \pi(q,N,A,t)$ for $|s|\leq |t|$ simply follows from $\|A^s\| \leq \|A^t\|$. The final statements are then simple consequences of the formula $\frac{1}{x}=\sum_{n=0}^\infty (1-x)^n$.
\end{proof}

\begin{rem}
We note that $\pi(q,N,1,0)=\pi(q,N^2)$ in \cite{D}.
\end{rem}

%	The conjugate variables $\xi_j^{(q)}$	
%%%%%%%%%%%%%%%%%%%%%

\subsection{The conjugate variables $\xi_j$}

Recall that $\hat{\sigma}_z=\sigma_z\otimes \sigma_{\bar{z}}$. We will show that $\partial_j^{(q)*}\circ\hat{\sigma}_{-i}\left(\left[\Xi_q^{-1}\right]^*\right)$ defines the conjugate variables for $\partial_j$, but first we require some estimates relating to $\partial_j^{(q)*}$.\par
Fix $c>0$ and let $R=\left(1+\frac{c}{2}\right)\frac{2}{1-|q|}$. For now, we only assume $|q|$ is small enough that $\Xi_q\in \left(\mathscr{P}\otimes\mathscr{P}^{op}\right)^{(R)}$.

\begin{lem}\label{bounded_adjoint}
For each $j=1,\ldots,N$, the maps $(\varphi\otimes 1)\circ\partial_j^{(q)}$ and $(1\otimes\varphi)\circ\bar{\partial}_j^{(q)}$ are bounded operators from $\mathscr{P}^{(R)}$ to itself with norms bounded by $\frac{1-|q|}{c}\|\Xi_q\|_{R\otimes_\pi R}$. Consequently the maps $m\circ(1\otimes\varphi\otimes 1)\circ\left(1\otimes\partial_j^{(q)} +\bar{\partial}_j^{(q)}\otimes 1\right)$ are bounded from $\left(\mathscr{P}\otimes\mathscr{P}^{op}\right)^{(R)}$ to $\mathscr{P}^{(R)}$ with norm bounded by $\frac{2(1-|q|)}{c}\|\Xi_q\|_{R\otimes_\pi R}$.
\end{lem}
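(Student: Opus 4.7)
The plan reduces the first statement to a monomial estimate that exploits a degree discount for $\varphi$. Setting $R_0 := 2/(1-|q|)$, so that $R = (1+c/2)R_0$, Lemma~4 of \cite{BS} gives $\|X_j\| \leq R_0$, hence $|\varphi(X_{\ul{i}})| \leq R_0^{|\ul{i}|}$; summing over monomials, $\varphi$ extends to a bounded functional on $(\mathscr{P}^{(R_0)},\|\cdot\|_{R_0})$ of norm at most $1$. In particular, a homogeneous polynomial $a$ of degree $m$ satisfies $|\varphi(a)| \leq (1+c/2)^{-m}\|a\|_R$, the decay that will supply the factor $(1-|q|)/c$ after summing a geometric series.

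For $P = X_{\ul{i}}$ of degree $n$, the derivation property writes
\begin{equation*}
\partial_j^{(q)}(P) = \sum_{l=1}^n \alpha_{i_l j}\, \bigl(X_{i_1}\cdots X_{i_{l-1}} \otimes (X_{i_{l+1}}\cdots X_{i_n})^\circ\bigr)\,\#\, \Xi_q.
\end{equation*}
I would introduce the obvious mixed projective-tensor norm $\|\cdot\|_{R_0\otimes_\pi R}$ on $\mathscr{P}\otimes\mathscr{P}^{op}$, defined as in Subsection~\ref{projective_tensor_norm} but with the left-hand bound $\|a\|_R$ replaced by the smaller $\|a\|_{R_0}$. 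This norm is still submultiplicative under $\#$, satisfies $\|\cdot\|_{R_0\otimes_\pi R} \leq \|\cdot\|_{R\otimes_\pi R}$, and has the key property $\|(\varphi \otimes 1)(D)\|_R \leq \|D\|_{R_0\otimes_\pi R}$ (take $\eta(a\otimes b^\circ) := \varphi(a)\cdot b$ valued in $\mathscr{P}^{(R)}$, which is admissible because $\eta(a\otimes 1)\in\C$ commutes with everything and $|\varphi(a)|\leq\|a\|_{R_0}$). Using $|\alpha_{i_lj}|\leq 1$ and submultiplicativity, each summand contributes at most $R_0^{l-1}\cdot R^{n-l}\cdot\|\Xi_q\|_{R\otimes_\pi R}$, giving
\begin{equation*}
\|(\varphi \otimes 1)\partial_j^{(q)}(P)\|_R \leq \|\Xi_q\|_{R \otimes_\pi R}\, R^{n-1}\sum_{k=0}^{n-1}(1+c/2)^{-k} \leq \frac{2+c}{c}\,\|\Xi_q\|_{R \otimes_\pi R}\, R^{n-1}.
\end{equation*}
Since $R = (2+c)/(1-|q|)$, the factor $\frac{2+c}{c}R^{n-1}$ collapses to $\frac{1-|q|}{c}R^n = \frac{1-|q|}{c}\|P\|_R$, and linearity promotes this to the claimed operator-norm bound. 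The estimate for $(1\otimes\varphi)\circ\bar{\partial}_j^{(q)}$ follows identically with the left/right roles interchanged, or alternatively from identity (\ref{differentiating_sigma_with_q}).

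For the composite operator it suffices to evaluate on a simple tensor $a \otimes b^\circ \in \mathscr{P}\otimes\mathscr{P}^{op}$. The summand $1 \otimes \partial_j^{(q)}$ produces, after $(1 \otimes \varphi \otimes 1)$ and $m$, the element $a\cdot(\varphi \otimes 1)\partial_j^{(q)}(b)$, while $\bar{\partial}_j^{(q)}\otimes 1$ produces $(1\otimes\varphi)\bar{\partial}_j^{(q)}(a)\cdot b$. Each is bounded in $\|\cdot\|_R$ by $\frac{1-|q|}{c}\|\Xi_q\|_{R\otimes_\pi R}\|a\|_R\|b\|_R$ via the first part, so the definition of $\|\cdot\|_{R\otimes_\pi R}$ promotes each to an operator of norm at most $\frac{1-|q|}{c}\|\Xi_q\|_{R\otimes_\pi R}$, and summing the two pieces yields the factor of $2$. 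The main technical subtlety is the introduction of the mixed projective norm to realize the degree discount on $\varphi$ inside a product $D\#\Xi_q$; once that bookkeeping is arranged, no further obstacle arises.
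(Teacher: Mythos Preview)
Your proposal is correct and follows essentially the same approach as the paper. The paper carries out the monomial estimate directly without naming the auxiliary mixed norm $\|\cdot\|_{R_0\otimes_\pi R}$, and for the second statement constructs an explicit admissible $\eta$ (left multiplication by $P$ on one leg, right multiplication by the rescaled $(\varphi\otimes 1)\partial_j^{(q)}(P)$ on the other) to invoke the supremum definition of $\|\cdot\|_{R\otimes_\pi R}$, whereas you use the equivalent infimum characterization by bounding on simple tensors; the underlying estimates are identical.
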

\begin{proof}
Recall that $\varphi$ is a state and $\|X_i\| \leq \frac{2}{1-|q|}$ and therefore $\varphi$ satisfies (\ref{phi_monomial}) with $C_0=\frac{2}{1-|q|}$. For $P\in \mathscr{P}^{(R)}$ write $P=\sum_{\ul{i}} a(\ul{i}) X_{\ul{i}}$ and denote $\|\Xi_q\|_{R\otimes_\pi R}=Q_0$. Then
	\begin{align*}
		\left\| (\varphi\otimes 1)\circ\partial_j^{(q)}(P)\right\|_R  &= \left\| \sum_{\ul{i}} a(\ul{i})(\varphi\otimes 1)\left( \sum_{k=1}^{|\ul{i}|} \alpha_{i_k j} X_{i_1}\cdots X_{i_{k-1}} \otimes X_{i_{k+1}}\cdots X_{i_{|\ul{i}|}}\#\Xi_q\right)\right\|_R \\
										&\leq \sum_{\ul{i}} |a(\ul{i})| \sum_{k=1}^{|\ul{i}|} \left(\frac{2}{1-|q|}\right)^{k-1} R^{n-k}Q_0 =\sum_{\ul{i}} |a(\ul{i})| R^{n-1} Q_0\sum_{k=1}^{\ul{i}} \left(\frac{1}{1+c/2}\right)^{k-1}\\
					&\leq \sum_{\ul{i}} a(\ul{i}) R^{n-1}Q_0 \frac{1}{1-\frac{1}{1+c/2}} =\|P\|_RQ_0 \frac{1}{R} \frac{1+c/2}{c/2} = \|P\|_R Q_0\frac{1-|q|}{c}.
	\end{align*}
The estimate for $(1\otimes \varphi)\circ\bar{\partial}_j^{(q)}$ is similar.\par
Define $\eta(P\otimes 1)$ to be left multiplication by $P$ on $\mathscr{P}^{(R)}$ and define $\eta(1\otimes P)$ to be right multiplication by $\frac{c}{1-|q|}Q_0^{-1}(\varphi\otimes 1)\circ\partial_j^{(q)}(P)$ on $\mathscr{P}^{(R)}$. Let $Q\in \mathscr{P}\otimes\mathscr{P}^{op}$, then by the above computations and the definition of $\|\cdot\|_{R\otimes_\pi R}$ we have
	\begin{align*}
		\left\|m\circ(1\otimes\varphi\otimes 1)\circ(1\otimes\partial_j^{(q)})(Q)\right\|_R = Q_0\frac{1-|q|}{c}\left\| \eta(Q)(1)\right\|_R\leq Q_0\frac{1-|q|}{c}\|Q\|_{R\otimes_\pi R}.
	\end{align*}
Similarly, $\| m\circ(1\otimes\varphi\otimes 1)\circ(\bar{\partial}_j^{(q)}\otimes 1)\|\leq Q_0\frac{1-|q|}{c}$ and so the final statement holds.
\end{proof}

Now let $|q|$ be sufficiently small that $\pi(q,N,A,-2)<1$. Then by Proposition \ref{Xi_invertible} and the statements preceding it, $\hat{\sigma}_{i}(\Xi_q^{-1}) = (\sigma_{2i}\otimes 1)(\Xi_q^{-1})$ and $(\sigma_{i}\otimes 1)(\Xi_q^{-1})$ exist as elements of $(\mathscr{P}\otimes\mathscr{P}^{op})^{(R)}$, as do their adjoints $\hat{\sigma}_{-i}\left(\left[\Xi_q^{-1}\right]^*\right)$ and $(\sigma_{-i}\otimes 1)\left(\left[\Xi_q^{-1}\right]^*\right)$. So by the preceding lemma the following defines an element of $\mathscr{P}^{(R)}$ for each $j=1,\ldots, N$: 
	\begin{align}\label{def_xi}
		\xi_j:= (\sigma_{-i}\otimes 1)\left(\left[\Xi_q^{-1}\right]^*\right)\# X_j - m\circ(1\otimes\varphi\otimes 1)\circ \left(1\otimes\partial_j^{(q)} + \bar{\partial}_j^{(q)}\otimes 1\right)\circ(\sigma_{-i}\otimes 1)\left(\left[\Xi_q^{-1}\right]^*\right),
	\end{align}
and
	\begin{align}\label{xi_R_norm}
		\|\xi_j\|_R \leq \left\| (\sigma_{i}\otimes 1)\left(\Xi_q^{-1}\right)\right\|_{R\otimes_\pi R} R + \frac{2(1-|q|)}{c}\left\| \Xi_q\right\|_{R\otimes_\pi R} \left\| (\sigma_{i}\otimes 1)\left(\Xi_q^{-1}\right)\right\|_{R\otimes_\pi R}.
	\end{align}
Now, using (\ref{adjoint_formula_2}) we see that
	\begin{align*}
		\partial_j^{(q)*}\circ\hat{\sigma}_{-i}\left(\left[\Xi_q^{-1}\right]^*\right) = (\sigma_{-i}\otimes 1)&\left(\left[\Xi_q^{-1}\right]^*\right)\# X_j \\
														&- m\circ(1\otimes\varphi\otimes \sigma_{-i})\circ\left(1\otimes \bar{\partial}_j^{(q)} + \bar{\partial}_j^{(q)}\otimes 1\right)\circ(\sigma_{-i}\otimes 1)\left(\left[\Xi_q^{-1}\right]^*\right),
	\end{align*} 
which is equivalent to $\xi_j$ defined above. Hence
	\begin{align*}
		\<\xi_j,P\>&=\<\hat{\sigma}_{-i}\left(\left[\Xi_q^{-1}\right]^*\right), \partial_j^{(q)}(P)\> = \varphi\otimes\varphi^{op}\left( \hat{\sigma}_i\left(\Xi_q^{-1}\right)\#\partial_{j}^{(q)}(P)\right) \\
			&= \varphi\otimes \varphi^{op}\left(\partial_j^{(q)}(P)\#\Xi_q^{-1}\right)=\varphi\otimes\varphi^{op}\left(\partial_j(P)\right)=\<1\otimes 1, \partial_j(P)\>.
	\end{align*}
Thus $\xi_j=\partial_j^*(1\otimes 1)$ is the conjugate variable of $X_1,\ldots,X_N$ with respect to the $\sigma$-difference quotient $\partial_j$. It also holds that $\xi_j=\xi_j^*$:
	\begin{align*}
		\<\xi_j^{*},P\>&=\varphi(\sigma_i(P)\xi_j)=\overline{\<\xi_j,\sigma_{-i}(P^*)\>}=\overline{\varphi\otimes\varphi^{op}(\partial_j\circ\sigma_{-i}(P^*))}\\
					&=\overline{\varphi\otimes\varphi^{op}\left(\bar{\partial}_{j}(P^*)\right)}=\varphi\otimes\varphi^{op}\left(\partial_j(P)\right)=\<\xi_j,P\>.
	\end{align*}
We remark that this could also be observed directly from the definition of $\xi_j$ in (\ref{def_xi}) using a combination of (\ref{Xi_q_sigma_invariant}) and the fact that $\Xi_q^\dagger=\Xi_q$.\par
We claim that there exists $V\in \mathscr{P}_{c.s.}^{(R,\sigma)}\subset M$ such that $\D_j V=\xi_j$. We first require a technical lemma which will lead to what is essentially the converse of Lemma \ref{change_of_variables}.(iii) in the case $Y=(\xi_1,\ldots, \xi_N)$.

\begin{lem}
Let $\xi_1,\ldots, \xi_N$ be as defined above. Then for $j,k\in\{1,\ldots, N\}$,
	\begin{align}\label{xi_has_positive_hessian}
		\partial_k(\xi_j)&=(1\otimes \sigma_{-i})\circ\bar{\partial}_j(\xi_k)^\diamond
	\end{align}
as elements of $L^2(M\bar{\otimes}M^{op},\varphi\otimes\varphi^{op})$. Furthermore,
	\begin{align}\label{xi_is_eigenvector}
		\sigma_{-i}(\xi_j)&=\sum_{k=1}^N [A]_{jk} \xi_k.
	\end{align}
\end{lem}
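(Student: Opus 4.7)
The plan is to prove the two identities separately. For the eigenvector identity (\ref{xi_is_eigenvector}), I would first establish the companion relation
\begin{align*}
\bar{\partial}_j^*(1\otimes 1)=\sigma_{-i}(\xi_j),
\end{align*}
which mirrors the second half of Proposition \ref{adjoint_of_q-derivations}. It follows from (\ref{differentiating_sigma}), the modular invariance $\varphi\circ\sigma_z=\varphi$, and the self-adjointness $\xi_j=\xi_j^*$ already noted after (\ref{def_xi}): for any $P\in\mathscr{P}$, a short manipulation via the KMS identity $\varphi(ab)=\varphi(\sigma_i(b)a)$ yields $\<\sigma_{-i}(\xi_j),P\>_\varphi=\varphi\otimes\varphi^{op}(\bar{\partial}_j(P))$.

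With this in hand, pairing both sides of (\ref{xi_is_eigenvector}) against an arbitrary $P\in\mathscr{P}$ and using the decompositions $\partial_k=\sum_l\alpha_{lk}\delta_l$, $\bar{\partial}_j=\sum_l\alpha_{jl}\delta_l$ reduces the identity to the scalar matrix equation $\bar{A}\,\alpha^T=\alpha$, equivalently $\alpha^T=A\alpha$. This is verified directly from the block form (\ref{matrix_form_A}) and (\ref{matrix_form_A_2}): since $\alpha_{jk}=\overline{\alpha_{kj}}$ we have $\alpha^T=\bar\alpha$, and property (2) combined with the self-adjointness of $A$ (visible in (\ref{matrix_form_A_2})) gives $\bar A=A^{-1}$, whence
\begin{align*}
\alpha^T=\bar\alpha=\frac{2}{1+\bar A}=\frac{2}{1+A^{-1}}=\frac{2A}{1+A}=A\alpha.
\end{align*}

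For the Hessian symmetry identity (\ref{xi_has_positive_hessian}), the plan is to check equality in $L^2(\mathscr{P}\otimes\mathscr{P}^{op},\varphi\otimes\varphi^{op})$ by pairing against simple tensors $P\otimes Q$ with $P,Q\in\mathscr{P}$. For the left-hand side, rewrite $\<\partial_k\xi_j,P\otimes Q\>_\otimes=\<\xi_j,\partial_k^*(P\otimes Q)\>_\varphi$, expand $\partial_k^*(P\otimes Q)$ via an analog of (\ref{adjoint_formula_2}) in which $X_j$ is replaced by $\xi_k=\partial_k^*(1\otimes 1)$, and then apply the conjugate-variable identity $\<\xi_j,R\>_\varphi=\varphi\otimes\varphi^{op}(\partial_j R)$ together with the Leibniz rule for $\partial_j$. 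For the right-hand side, exploit that $1\otimes\sigma_{-i}$ is self-adjoint on $L^2(M\bar{\otimes}M^{op},\varphi\otimes\varphi^{op})$ (a short KMS calculation shows $\varphi(Q\sigma_i(y^*))=\varphi(\sigma_{-i}(Q)y^*)$) and compute the adjoint of $\diamond$ to rewrite the pairing as $\<\xi_k,\bar{\partial}_j^*(\text{twisted }P\otimes Q)\>_\varphi$; the formula $\bar{\partial}_j^*(1\otimes 1)=\sigma_{-i}(\xi_k)$ from the first identity and a parallel expansion then yield the same list of terms, matched via KMS. Extension from polynomial test tensors to all of $L^2$ is by density, justified by $\xi_j\in\mathscr{P}^{(R)}$ from the bound (\ref{xi_R_norm}).

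The main obstacle will be the second identity: both sides generate several Leibniz-type terms once the adjoint formulas are applied, and aligning them requires careful bookkeeping with the KMS condition, the self-adjointness of $1\otimes\sigma_{-i}$ under the sesquilinear pairing, and the web of relations among $\partial_j$, $\bar{\partial}_j$, $\tilde{\partial}_j$, and $\D_j$ recorded in Subsections \ref{derivations_on_M_q} and \ref{tensor_product_notation}. The eigenvector identity, by contrast, collapses cleanly to a finite-dimensional matrix calculation once the conjugate-variable interpretation of $\bar{\partial}_j$ is secured.
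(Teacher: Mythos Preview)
Your plan is correct and follows essentially the same route as the paper. For (\ref{xi_is_eigenvector}) the paper likewise reduces to the conjugate-variable identity $\varphi\otimes\varphi^{op}(\bar{\partial}_j P)=\langle\sigma_{-i}(\xi_j),P\rangle_\varphi$ together with the matrix relation $\sum_k[A^{-1}]_{jk}\partial_k=\bar{\partial}_j$, which is exactly your $\alpha^T=A\alpha$. For (\ref{xi_has_positive_hessian}) the paper also tests against elementary tensors and expands via the adjoint formula; the only organizational difference is that the paper transforms $\langle\partial_k\xi_j,a\otimes b\rangle$ directly into the right-hand side rather than computing both sides separately, and the key cancellations are the commutation identities $(\varphi\otimes 1)\circ(\bar{\partial}_k\otimes\varphi)\circ\bar{\partial}_j=(1\otimes\varphi)\circ(\varphi\otimes\bar{\partial}_j)\circ\bar{\partial}_k$ and $(1\otimes\varphi)\circ(\varphi\otimes\bar{\partial}_k)\circ\partial_j=(\varphi\otimes 1)\circ(\partial_j\otimes\varphi)\circ\bar{\partial}_k$, verified on monomials---this is precisely the ``careful bookkeeping'' you anticipate.
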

\begin{proof}
It suffices to check
	\begin{align*}
		\<\partial_i(\xi_j), a\otimes b\> = \<(1\otimes \sigma_{-i})\circ\bar{\partial}_j(\xi_i)^\diamond, a\otimes b\>
	\end{align*}
for elementary tensors $a\otimes b\in L^2(M\bar{\otimes}M^{op},\varphi\otimes\varphi^{op})$. So using (\ref{adjoint_formula}) we compute
	\begin{align*}
		\<\partial_k(\xi_j), a\otimes b\>=&\varphi(\xi_j a\xi_k\sigma_{-i}(b)) - \varphi(\xi_ja[(\varphi\otimes\sigma_{-i})\circ\bar{\partial}_k(b)]) - \varphi(\xi_j[(1\otimes\varphi)\circ\bar{\partial}_k(a)]\sigma_{-i}(b))\\
							      =&\<\partial_j^*\left( (\sigma_{-i}(b)\otimes a)^\dagger\right),\xi_k\>\\
							      	& + \varphi( \left\{ a^*[(\varphi\otimes \sigma_{-i})\circ\bar{\partial}_j\circ\sigma_{i}(b^*)]\right\}^*\xi_k)+\varphi(\{[(1\otimes \varphi)\circ\bar{\partial}_j(a^*)]b^*\}^*\xi_k)\\
							      & - \varphi( [(\varphi\otimes 1)\circ\partial_j(a)][(\varphi\otimes\sigma_{-i})\circ\bar{\partial}_k(b)]) - \varphi( a [(1\otimes \varphi)\circ\partial_j\circ(\varphi\otimes \sigma_{-i})\circ\bar{\partial}_k(b)])\\
							      &-\varphi([(\varphi\otimes 1)\circ\partial_j\circ(1\otimes \varphi)\circ\bar{\partial}_k(a)]\sigma_{-i}(b)) - \varphi([(1\otimes \varphi)\circ\bar{\partial}_k(a)][(1\otimes\varphi)\circ\partial_j\circ\sigma_{-i}(b)]).
	\end{align*}
We note that
	\begin{align*}
		\varphi(P^*\xi_k)=\overline{\<\xi_k,P\>}=\overline{\varphi\otimes \varphi^{op}(\partial_k (P))}=\varphi\otimes\varphi^{op}(\partial_k(P)^\dagger)=\varphi\otimes\varphi^{op}(\bar{\partial}_k(P^*)).
	\end{align*}
Applying this to the second and third terms in the above computation yields
	\begin{align*}
		\<\partial_k(\xi_j), a\otimes b\>=&\<\partial_j^*\left( (\sigma_{-i}(b)\otimes a)^\dagger\right),\xi_k\>\\
							&+ \varphi\otimes\varphi^{op}( \bar{\partial}_k\{[(\sigma_{i}\otimes\varphi)\circ\partial_j\circ\sigma_{-i}(b)]a\})+\varphi\otimes\varphi^{op}(\bar{\partial}_k\{b[(\varphi\otimes 1)\circ\partial_j(a)]\})\\
							& - \varphi( [(\varphi\otimes 1)\circ\partial_j(a)][(\varphi\otimes\sigma_{-i})\circ\bar{\partial}_k(b)]) - \varphi( a [(1\otimes \varphi)\circ\partial_j\circ(\varphi\otimes \sigma_{-i})\circ\bar{\partial}_k(b)])\\
							&-\varphi([(\varphi\otimes 1)\circ\partial_j\circ(1\otimes \varphi)\circ\bar{\partial}_k(a)]\sigma_{-i}(b)) - \varphi([(1\otimes \varphi)\circ\bar{\partial}_k(a)][(1\otimes\varphi)\circ\partial_j\circ\sigma_{-i}(b)])\\
							=& \<[\sigma_{-i}(b)\otimes a]^\dagger, \partial_j(\xi_k)\>\\
							&+\varphi([(\varphi\otimes 1)\circ\bar{\partial}_k\circ(\sigma_i\otimes \varphi)\circ\partial_j\circ\sigma_{-i}(b)]a)  - \varphi( a [(1\otimes \varphi)\circ\partial_j\circ(\varphi\otimes \sigma_{-i})\circ\bar{\partial}_k(b)])\\
							&+\varphi(b[(1\otimes \varphi)\circ\bar{\partial}_k\circ(\varphi\otimes 1)\circ\partial_j(a)]) -\varphi([(\varphi\otimes 1)\circ\partial_j\circ(1\otimes \varphi)\circ\bar{\partial}_k(a)]\sigma_{-i}(b)).
	\end{align*}
Now, applying (\ref{differentiating_sigma_with_q}) to the second line in the last equality above yields
	\begin{equation*}
		\varphi([(\varphi\otimes 1)\circ(\bar{\partial}_k\otimes \varphi)\circ\bar{\partial}_j(b)]a) -\varphi( [(1\otimes \varphi)\circ(\varphi\otimes \bar{\partial}_j)\circ\bar{\partial}_k(b)]a).
	\end{equation*}
This is zero if $(\varphi\otimes 1)\circ(\bar{\partial}_k\otimes \varphi)\circ\bar{\partial}_j=(1\otimes \varphi)\circ(\varphi\otimes \bar{\partial}_j)\circ\bar{\partial}_k$, but this is easily verified by computing on monomials. Finally, the final line in the last equality of the computation is equivalent to
	\begin{equation*}
		\varphi(b[(1\otimes \varphi)\circ\bar{\partial}_k\circ(\varphi\otimes 1)\circ\partial_j(a)]) -\varphi(b[(\varphi\otimes 1)\circ\partial_j\circ(1\otimes \varphi)\circ\bar{\partial}_k(a)]).
	\end{equation*}
This is zero if $(1\otimes\varphi)\circ(\varphi\otimes \bar{\partial}_k)\circ\partial_j = (\varphi\otimes 1)\circ(\partial_j \otimes \varphi)\circ\bar{\partial}_k$, but again this is easily checked on monomials. Thus
	\begin{align*}
		\<\partial_k(\xi_j),a\otimes b\>&=\<[\sigma_{-i}(b)\otimes a]^\dagger, \partial_j(\xi_k)\>=\varphi\otimes \varphi^{op}(a\otimes \sigma_{-i}(b)\#\partial_j(\xi_k))\\
				&=\varphi\otimes \varphi^{op}( (\sigma_i\otimes 1)\circ\partial_j(\xi_k)\# a\otimes b)=\<(1\otimes \sigma_{-i})\circ\bar{\partial}_j(\xi_k^*)^\diamond, a\otimes b\>,
	\end{align*}
showing (\ref{xi_has_positive_hessian}).\par
Towards verifying (\ref{xi_is_eigenvector}), we note that
	\begin{align*}
		\sum_{k=1}^N \left[A^{-1}\right]_{jk}\partial_k = \bar{\partial}_j.
	\end{align*}
Hence for $P\in\mathscr{P}$ we have
	\begin{align*}
		\< \sum_{k=1}^N [A]_{jk} \xi_k, P\> &= \sum_{k=1}^N [A]_{kj} \varphi\otimes\varphi^{op}\left(\partial_k(P)\right) = \varphi\otimes\varphi^{op}\left( \bar{\partial}_j(P) \right)\\
						& = \overline{\varphi\otimes\varphi^{op}\left( \partial_j(P^*)\right)}= \overline{\<\xi_j, P^*\>}=\varphi(P\xi_j)=\varphi(\sigma_i(\xi_j)P)=\<\sigma_{-i}(\xi_j),P\>,
	\end{align*}
which establishes (\ref{xi_is_eigenvector}).
\end{proof}

Define
	\begin{align*}
		V = \Sigma\left(\sum_{j,k=1}^N \left[\frac{1+A}{2}\right]_{jk} \xi_k X_j\right).
	\end{align*}
Note that (\ref{xi_R_norm}) implies $V\in\mathscr{P}^{(R)}$. We further claim that $\D_j V=\xi_j$ and $V\in\mathscr{P}_{c.s.}^{(R,\sigma)}$. The former is equivalent to
	\begin{align*}
		\D_j(\mathscr{N}V)=(1+\mathscr{N})\D_j V=(1+\mathscr{N})\xi_j= \xi_j+\sum_{k=1}^n \delta_k(\xi_j)\# X_k.
	\end{align*}
To show this, we first note that $\D_j=m\circ\diamond\circ(1\otimes\sigma_{-i})\circ\bar{\partial}_j$ and so by the derivation property of $\bar{\partial}_j$ we have
	\begin{align*}
		\D_j(PQ)= (1\otimes\sigma_{-i})\circ\bar{\partial}_j(P)^\diamond \# \sigma_{-i}(Q) + (1\otimes\sigma_{-i})\circ\bar{\partial}_j(Q)^\diamond\#P.
	\end{align*}
Thus using (\ref{xi_has_positive_hessian}) and $\sigma_{-i}(X_j)=[A X]_j$ from (\ref{modular_semicircular}) we have
	\begin{align*}
		\D_t(\mathscr{N}V)&=\sum_{j,k=1}^N \left[\frac{1+A}{2}\right]_{jk} \left( (1\otimes\sigma_{-i})\circ\bar{\partial}_t(\xi_k)\# \sigma_{-i}(X_j) + \alpha_{tj} \xi_k\right)\\
			&=\sum_{j,k,l=1}^N \left[\frac{1+A}{2}\right]_{jk} \partial_k(\xi_t)\# [A]_{jl}X_l + \sum_{j,k=1}^N \left[\frac{2}{1+A}\right]_{tj}\left[\frac{1+A}{2}\right]_{jk} \xi_k= \xi_t+\sum_{l=1}^N \delta_l(\xi_t)\# X_l,
	\end{align*}
as claimed.\par
Now, in order to show $V\in\mathscr{P}_{c.s.}^{(R,\sigma)}$ we will show that $V$ is invariant under $\sigma_{-i}$ and that $\mathscr{S}(V)=V$ Together, these imply that $V$ is invariant under $\rho$ and hence $V\in\mathscr{P}_{c.s.}^{(R,\sigma)}$ (that $V$ has finite $\|\cdot\|_{R,\sigma}$-norm follows from the fact that for $\rho$ invariant elements this norm agrees with the $\|\cdot\|_R$-norm). Using (\ref{xi_is_eigenvector}) and $\sigma_{-i}(X_j)=[A X]_j$ we see that
	\begin{align*}
		\sigma_{-i}(V)&=\Sigma\left( \sum_{j,k=1}^N \left[\frac{1+A}{2}\right]_{jk} \sum_{l=1}^N [A]_{kl} \xi_l \sum_{m=1}^N [A]_{jm} X_m\right)\\
				&=\Sigma\left( \sum_{j,k,l,m=1}^N \left[A^{-1}\right]_{mj}\left[\frac{1+A}{2}\right]_{jk}[A]_{kl} \xi_l X_m\right) = V.
	\end{align*}
Towards seeing $\mathscr{S}(V)=V$, we note that
	\begin{align*}
		\mathscr{S}(X_{i_1}\cdots X_{i_n}) &= \frac{1}{n}\sum_{l=0}^{n-1} \rho^l(X_{i_1}\cdots X_{i_n}) = \frac{1}{n}\sum_{l=1}^N \left[m\circ(1\otimes\sigma_{-i})\circ\delta_l(X_{i_1}\cdots X_{i_n})^\diamond\right]X_l\\
			&=\Sigma\left(\sum_{l=1}^N \left[m\circ(1\otimes\sigma_{-i})\circ\delta_l(X_{i_1}\cdots X_{i_n})^\diamond\right]X_l\right),
	\end{align*}
and by linearity this extends to general polynomials $P$. Hence
	\begin{align*}
		\sum_{l,m=1}^N \left[\frac{1+A}{2}\right]_{lm} &\left[m\circ(1\otimes\sigma_{-i})\circ\bar{\partial}_{m}(P)^\diamond\right]X_l \\
					&= \sum_{l=1}^N \left[m\circ(1\otimes\sigma_{-i})\circ\delta_l(P)^\diamond\right] X_l = \mathscr{N}\mathscr{S}(P)=\mathscr{S}(\mathscr{N}P).
	\end{align*}
Consequently (\ref{xi_has_positive_hessian}) implies
	\begin{align*}
		\mathscr{S}(\mathscr{N}^2V) &= \sum_{l,m=1}^N \left[\frac{1+A}{2}\right]_{lm} \left[(1\otimes\sigma_{-i})\circ\bar{\partial}_m\left(\mathscr{N}V\right)^\diamond\right] X_l\\
						&=\sum_{j,k,l,m=1}^N \left[\frac{1+A}{2}\right]_{lm}\left[\frac{1+A}{2}\right]_{jk} \left[ (1\otimes\sigma_{-i})\circ\bar{\partial}_m(\xi_k)^\diamond\# \sigma_{-i}(X_j) + \alpha_{mj}\xi_k\right]X_l\\
						&=\sum_{j,k,l,m,a=1}^N \left[\frac{1+A}{2}\right]_{lm}\left[\frac{1+A}{2}\right]_{jk} [A]_{ja}\left[ \partial_k(\xi_m)\# X_a\right]X_l + \sum_{k,l=1}^N \left[\frac{1+A}{2}\right]_{lk} \xi_k x_l\\
						&=\sum_{l,m=1}^N \left[\frac{1+A}{2}\right]_{lm} \left[\mathscr{N}-1\right]\left(\xi_m X_l\right) + \mathscr{N}V=\mathscr{N}^2V.
	\end{align*}
Thus $\mathscr{S}(V)=V$, and $V\in\mathscr{P}_{c.s.}^{(R,\sigma)}$ as claimed.\par
Note
	\begin{align*}
		V_0=\frac{1}{2}\sum_{j,k=1}^N \left[\frac{1+A}{2}\right]_{jk} X_kX_j = \Sigma\left(\sum_{j,k=1}^N \left[\frac{1+A}{2}\right]_{jk} X_kX_j\right),
	\end{align*}
and define $W:=V-V_0$. Then $W\in\mathscr{P}_{c..s}^{(R,\sigma)}$ and
	\begin{align*}
		\|W\|_{R,\sigma}=\|W\|_{R} \leq \sum_{j,k=1}^N \left|\left[\frac{1+A}{2}\right]_{jk}\right| \|\xi_k - X_k\|_{R} R.
	\end{align*}
We claim that $\|\xi_k - X_k\|_{R}\rightarrow 0 $ as $|q|\rightarrow 0$, and consequently $\|W\|_{R,\sigma}\rightarrow 0$. Indeed, we can write
	\begin{align*}
		X_k=\left(\left[1\otimes 1\right]^*\right)\# X_k - m\circ(1\otimes\varphi\otimes 1)\circ\left(1\otimes\partial_k^{(q)}+\bar{\partial}_k^{(q)}\otimes 1\right)\left( \left[1\otimes 1\right]^*\right),
	\end{align*}
and so using (\ref{def_xi}) and Lemma \ref{bounded_adjoint} we have
	\begin{align*}
		\|\xi_k - X_k\|_{R} \leq \left\| (\sigma_i\otimes 1)(\Xi_q^{-1}) - 1\otimes 1 \right\|_{R\otimes_\pi R}R + \frac{2(1-|q|)}{c} \|\Xi_q\|_{R\otimes_\pi R}\left\| (\sigma_i\otimes 1)(\Xi_q^{-1}) - 1\otimes 1\right\|_{R\otimes_\pi R}.
	\end{align*}
From the final remark in Proposition \ref{Xi_invertible}, we see that this tends to zero as $|q|\rightarrow 0$. Thus we are in a position to apply our transport results from Section \ref{construction}. Using Corollary \ref{iso_cor} we obtain the following result.

\begin{thm}
There exists $\epsilon>0$ such that $|q|<\epsilon$ implies $\Gamma_q(\H_\R,U_t)\cong \Gamma_0(\H_\R,U_t)$ and $\Gamma_q(\H_\R,U_t)''\cong\Gamma_0(\H_\R,U_t)$.
\end{thm}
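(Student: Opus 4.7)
The plan is to apply Corollary \ref{iso_cor} with the potential $V=V_0+W\in\mathscr{P}_{c.s.}^{(R,\sigma)}$ that has just been constructed, after verifying that its hypotheses can be met for $|q|$ sufficiently small. The key recognition is that $\varphi_q$, regarded as a law on $\C\langle t_1,\ldots,t_N\rangle$ via the generators $X_1^{(q)},\ldots,X_N^{(q)}$, is precisely the free Gibbs state with potential $V$: the identity $\D_j V=\xi_j=\partial_j^*(1\otimes 1)$ established in the preceding subsection is exactly the Schwinger-Dyson equation (\ref{Schwinger-Dyson_2}), namely $\J_\sigma^*(1)=\D V$.

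To invoke Corollary \ref{iso_cor}, I would first fix $R'>R\geq 4\sqrt{\|A\|}$ (and $R'>R\|A\|^{1/4}$ if the monotone version is desired). Choose $c>0$ large enough that $(1+c/2)\cdot 2>R'+1$; then the radius $R_q:=(1+c/2)\cdot 2/(1-|q|)$ at which $\Xi_q$, $\xi_j$, and $W$ were constructed strictly exceeds $R'+1$ for all small $|q|$. Since the inclusion $\mathscr{P}^{(R_q,\sigma)}\hookrightarrow\mathscr{P}^{(R'+1,\sigma)}$ is norm-decreasing, the estimate $\|W\|_{R_q,\sigma}\to 0$ obtained just above from Proposition \ref{Xi_invertible} and the closed form for $\|\xi_k-X_k\|_{R_q}$ forces $\|W\|_{R'+1,\sigma}\to 0$. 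Hence for $|q|$ smaller than some $\epsilon_1$, the smallness threshold $\|W\|_{R'+1,\sigma}<C$ of the corollary is met, yielding state-preserving isomorphisms $C^*(\varphi_V)\cong\Gamma_0(\H_\R,U_t)$ and $W^*(\varphi_V)\cong\Gamma_0(\H_\R,U_t)''$.

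To identify $C^*(\varphi_V)$ with $\Gamma_q(\H_\R,U_t)$, I would apply Theorem \ref{Gibbs_state_unique}: whenever $|q|\leq 1/3$, the bound $\|X_j^{(q)}\|\leq 2/(1-|q|)\leq 3$ from \cite{BS} yields $|\varphi_q(X_{\underline{j}}^{(q)})|\leq 3^{|\underline{j}|}$, so the uniqueness theorem identifies $\varphi_q$ as the unique free Gibbs state with potential $V$ (after shrinking $\|W\|_{R,\sigma}$ further if necessary). Setting $\epsilon:=\min(\epsilon_1,1/3)$, one concludes $\Gamma_q(\H_\R,U_t)=C^*(X^{(q)})\cong C^*(\varphi_V)\cong\Gamma_0(\H_\R,U_t)$, and likewise at the von Neumann algebra level. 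The main delicate point is juggling two distinct radii—the $q$-dependent $R_q$ on which $\Xi_q$ is invertible and $W$ naturally lives, versus the $q$-independent $R'+1$ required by the transport corollary—but this is handled by choosing $c$ large so that $R_q\geq R'+1$ uniformly for small $|q|$.
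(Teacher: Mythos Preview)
Your proposal is correct and follows the same approach as the paper, which simply states that the theorem follows from Corollary \ref{iso_cor} after the construction of $V$ and $W$ in the preceding subsections. You have made explicit two points the paper leaves implicit: the management of the $q$-dependent radius $R_q=(1+c/2)\cdot 2/(1-|q|)$ versus the fixed $R'+1$ required by Corollary \ref{iso_cor}, and the invocation of Theorem \ref{Gibbs_state_unique} (with the moment bound coming from $\|X_j^{(q)}\|\leq 3$ for $|q|\leq 1/3$) to identify $\varphi_q$ with $\varphi_V$.
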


Using the classification of $\Gamma_0(\H_\R,U_t)''$ in Theorem 6.1 of \cite{S97} we obtain the following classification result.

\begin{cor}
For $\H_\R$ finite dimensional, let $G$ be the multiplicative subgroup of $\R^\times_+$ generated by the spectrum of $A$. Then there exists $\epsilon>0$ such that for $|q|<\epsilon$
	\begin{align*}
		\Gamma_q(\H_\R,U_t)''\text{ is a factor of type }\left\{\begin{array}{cl}	\mathrm{III}_1	&	\text{if }G=\R_+^\times\\
													\mathrm{III}_\lambda & 	\text{if }G=\lambda^\Z,\ 0<\lambda<1\\
													\mathrm{II}_1		&	\text{if }G=\{1\}.\end{array}\right.
	\end{align*}
Moreover, $\Gamma_q(\H_\R,U_t)''$ is full.
\end{cor}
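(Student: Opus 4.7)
The plan is to combine the isomorphism from the preceding theorem with the classification of free Araki--Woods factors in \cite{S97}. By that theorem, there exists $\epsilon>0$ so that for $|q|<\epsilon$ we have a state-preserving isomorphism $\Gamma_q(\H_\R,U_t)''\cong \Gamma_0(\H_\R,U_t)''$ sending $\varphi_q$ to $\varphi_0$. Since such an isomorphism intertwines the modular automorphism groups $\sigma_t^{\varphi_q}$ and $\sigma_t^{\varphi_0}$, it preserves the Connes $S$- and $T$-invariants; in particular the factor type and the fullness property are transported from one side to the other.

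The second step is to read off the type of $\Gamma_0(\H_\R,U_t)''$ from Theorem 6.1 of \cite{S97}, which treats precisely the finite-dimensional case. That theorem gives type $\mathrm{III}_1$ when $G=\R_+^\times$ and type $\mathrm{III}_\lambda$ when $G=\lambda^\Z$ with $0<\lambda<1$. The remaining case $G=\{1\}$ means $A=1$, hence $U_t\equiv 1$ and $\varphi_0$ is a trace; in that setting $\Gamma_0(\H_\R,U_t)''$ is the free group factor $L(\mathbb{F}_N)$ with $N=\dim \H_\R$, which is of type $\mathrm{II}_1$. The same theorem (together with the standard results on free Araki--Woods factors in \cite{S97}) asserts that $\Gamma_0(\H_\R,U_t)''$ is full. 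Transporting these conclusions back across the state-preserving isomorphism yields the stated classification for $\Gamma_q(\H_\R,U_t)''$ and its fullness.

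There is essentially no technical obstacle: the substantive work has already been carried out, first in constructing the monotone transport to produce the isomorphism $\Gamma_q\cong \Gamma_0$, and second in \cite{S97} where the classification and fullness for $\Gamma_0$ are established. The only verification needed is that the isomorphism from the previous theorem does intertwine $\varphi_q$ and $\varphi_0$, which is immediate by construction since the transported generators $Y=\D G$ have law equal to the free Gibbs state $\varphi_V=\varphi_q$.
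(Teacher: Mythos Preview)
Your proposal is correct and matches the paper's approach exactly: the paper states the corollary without proof, merely noting beforehand that it follows from the isomorphism theorem together with the classification in Theorem~6.1 of \cite{S97}. Your write-up supplies the obvious details; note that factor type and fullness are isomorphism invariants of the von Neumann algebra itself, so the state-preserving nature of the isomorphism is not strictly needed for this step (though it does no harm).
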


\end{document}